\documentclass[10pt,reqno]{amsart}
\usepackage{setspace}
\onehalfspacing
\usepackage[utf8]{inputenc}
\usepackage{graphicx}
\graphicspath{ {./images/} }
\setcounter{tocdepth}{2}
\usepackage[pagewise]{lineno}
\setcounter{tocdepth}{3}
\usepackage{amssymb,amstext, amsthm,amsmath}
\usepackage[foot]{amsaddr}
\usepackage[all,arc]{xy}
\usepackage{enumerate}
\usepackage{mathrsfs}
\usepackage[margin=1in]{geometry} 
\usepackage{amsmath,amsthm}
\usepackage{mathtools}
\usepackage[T1]{fontenc}
\usepackage{fourier}
\usepackage{stmaryrd}
\usepackage{amsfonts}
\usepackage{cite}

\usepackage[backref=page]{hyperref}
\usepackage[capitalise,nameinlink]{cleveref}

\newcommand *\w{^\wedge}

\newcommand{\fint}{{\vbox{\hbox{--}}\kern-0.8em}\int}
\newtheorem{thm}{Theorem}[section]
\theoremstyle{definition}

\newtheorem{example}[thm]{Example}

\newtheorem{theorem}{Theorem}[section]
\newtheorem{lemma}[theorem]{Lemma}
\newtheorem{proposition}[theorem]{Proposition}
\newtheorem{definition}[theorem]{Definition}
\newtheorem{remark}[theorem]{Remark}

\usepackage{xcolor}
\hypersetup{
	colorlinks,
	linkcolor={blue!100!black},
	citecolor={red!100!black},
	urlcolor={green!100!black}
}
\raggedbottom
\overfullrule=0pt

\DeclarePairedDelimiterX{\inp}[2]{\langle}{\rangle}{#1, #2}


\makeatletter
\def\@tocline#1#2#3#4#5#6#7{\relax
	\ifnum #1>\c@tocdepth 
	\else
	\par \addpenalty\@secpenalty\addvspace{#2}%
	\begingroup \hyphenpenalty\@M
	\@ifempty{#4}{%
		\@tempdima\csname r@tocindent\number#1\endcsname\relax
	}{%
		\@tempdima#4\relax
	}%
	\parindent\z@ \leftskip#3\relax \advance\leftskip\@tempdima\relax
	\rightskip\@pnumwidth plus4em \parfillskip-\@pnumwidth
	#5\leavevmode\hskip-\@tempdima
	\ifcase #1
	\or\or \hskip 1em \or \hskip 2em \else \hskip 3em \fi%
	#6\nobreak\relax
	\dotfill\hbox to\@pnumwidth{\@tocpagenum{#7}}\par
	\nobreak
	\endgroup
	\fi}
\makeatother

\usepackage{hyperref}

\numberwithin{equation}{section}

\usepackage{epigraph}

\title{Global $C^{1,\alpha}$-Regularity for 
Musielak-Orlicz Equations in Divergence Form}

\author{Hlel Missaoui$^{1}$}
\address{$^{1}$ Mathematics Department,  Higher Institute of Computer Science of Mahdia, University of Monastir, 5111 Mahdia, Tunisia.}
\email{$^{1}$ \tt hlel.missaoui@fsm.rnu.tn}
\date{}
\begin{document}
\begin{abstract}
In this paper, we establish global $C^{1,\alpha}$-regularity for bounded generalized solutions of elliptic equations in divergence form with Musielak-Orlicz growth and subject to Dirichlet or Neumann boundary conditions. In fact, our findings extend and generalize several important regularity results in cases of special attention such as variable exponent spaces, Orlicz spaces, and some $(p,q)$ situations. We also point out new conditions in the analysis that focus on the interplay between non-standard growth conditions and the boundary behavior in such generalized examples.
\end{abstract}
\maketitle
\noindent \textbf{Keywor{\rm d}s.} Global regularity; Musielak-Orlicz-Sobolev Spaces; Elliptic equation; Boundary value
problem;\\ Bounded generalized solution.

\noindent\textbf{\textup{2020} Mathematics Subject Classification.} {Primary: 
35R09, 
35B65; 
Secondary: 
35J10, 35J70
}

\smallskip
	
\tableofcontents

\section{Introduction}
\label{sec:intro}
\subsection{Overview}
The regularity theory for elliptic partial differential equations has been a pillar of modern analysis for more than fifty years, and one of its profound results is \( C^{1,\alpha} \)-regularity, which denotes the existence of Hölder continuous first derivatives of weak or generalized solutions. The results display regularity inherent in the solutions, and provide important information about the qualitative behavior of solutions associated with elliptic nonlinear equations. The current paper carries on the line of investigation that was first explored in \cite{HBO}, where the authors established \( C^{0,\alpha} \) (Hölder) continuity of bounded weak solutions under Musielak--Orlicz growth conditions. We are now able to prove global \( C^{1,\alpha} \)-regularity for bounded generalized solutions of elliptic equations in divergence form
\begin{equation}\label{eqs}
-\mathrm{div}\, A(x,u,Du) = B(x,u,Du),
\end{equation}
for both Dirichlet and Neumann boundary problems, in a single analytical framework.\\

Elliptic regularity theory originated with the work of De Giorgi \cite{deGiorgi1957}, Nash \cite{Nash1958}, and Moser \cite{Moser1960,Moser1961}, who independently established the Hölder continuity of weak solutions to linear elliptic equations with bounded measurable coefficients. This famous result (the De Giorgi–Nash–Moser theorem) formed the foundation for everything else concerning regularity in nonlinear elliptic equations. For equations with standard polynomial growth, Uraltseva \cite{Uraltseva1968} and Uhlenbeck \cite{Uhlenbeck1977} provided \( C^{1,\alpha} \)-regularity results for the \( p \)-Laplacian and quasilinear equations. Later, Lieberman \cite{Li1988} established fundamental regularity theory for quasilinear elliptic equations with polynomial growth of order $p>1$, under the structural assumption
\[
\lambda |\xi|^{p} \leq A(x,u,\xi)\cdot\xi \leq \Lambda |\xi|^{p},
\]
which ensures uniform ellipticity and coercivity.\\
The Orlicz framework further generalized this theory by replacing the power-type growth $|\xi|^p$ with an $N$-function $G(t)$, leading to structure conditions
\[
\lambda G(|\xi|) \leq A(x,u,\xi)\cdot\xi \leq \Lambda G(|\xi|),
\]
which accommodate a broad class of nonlinearities while preserving the essential features of elliptic regularity \cite{DonaldsonTrudinger1971,Chlebicka2021,Li1991}.

In the late 1980s and early 1990s, Marcellini \cite{marc1,marc2} developed the first systematic theory of elliptic equations with nonstandard $(p,q)$-growth, given by
\[
|\xi|^{p} \lesssim A(x,\xi)\cdot\xi \lesssim |\xi|^{q}, \qquad 1<p\le q.
\]
He showed that the ratio $q/p$ is crucial for regularity, since a large gap between the exponents may lead to discontinuous solutions \cite{Mar1}. Although related ideas had appeared earlier in homogenization theory, particularly in the work of Zhikov, Marcellini’s results laid the foundations for the modern study of elliptic problems with nonuniform growth.

At the turn of the century, Fan and Zhao initiated a systematic investigation into the regularity properties of solutions to quasilinear elliptic equations with non-standard growth conditions, with particular focus on equations of the form
\[
    -\mathrm{div}\big(|D u|^{p(x)-2}D u\big) = f(x,u,D u),
\]
where the exponent $p: \Omega \to \mathbb{R}$ is a measurable function rather than a constant. Their research program unfolded in two complementary directions. First, they developed the fundamental functional-analytic framework by establishing critical embedding theorems and compactness results for variable exponent Lebesgue spaces $L^{p(x)}(\Omega)$ and Sobolev spaces $W^{1,p(x)}(\Omega)$; see~\cite{Fan2012a}. These foundational results provided the necessary infrastructure for applying variational methods and fixed point theory to nonlinear problems with variable exponents.

Building upon these functional-analytic foundations, Fan and Zhao subsequently derived precise regularity estimates for weak solutions of the $p(x)$-Laplacian equation. A key contribution was their identification of the log-Hölder continuity condition
\[
|p(x) - p(y)| \leq \frac{C}{\ln\left(e + \frac{1}{|x-y|}\right)} \quad \text{for all } x,y \in \Omega,
\]
as both necessary and sufficient for obtaining optimal regularity results; see~\cite{FanZhao1999, Fan2007}.

More recently, the study of double-phase functionals of the form
\[
G(x,\xi) = |\xi|^p + a(x)|\xi|^q, \quad 1 < p \leq q, \ a(x) \geq 0,
\]
initiated by Marcellini \cite{marc1} and systematically developed by Zhikov, Baroni, Colombo and Mingione \cite{Baroni2016,Colombo2015,Colombo2015b,Zhikov1987,Zhikov1995}, has revealed subtle regularity phenomena depending on the Hölder continuity of the coefficient $a(x)$. These results demonstrated that even mild oscillations in $a(x)$ can influence the regularity exponent $\alpha$, providing a deeper understanding of mixed growth problems.

In this context, Cristina De Filippis and  
Jehan Oh have made remarkable contributions, developing a unified framework for double-phase and multi-phase functionals \cite{DF-multiphase,DFS2023}. They established optimal regularity results, improved higher integrability, and derived sharp criteria connecting the Hölder continuity of the coefficients with the resulting smoothness of solutions. In particular, the notion of multi-phase growth, where several distinct energy densities interact,
\[
G(x,\xi) = \sum_{i=1}^{k} a_i(x) |\xi|^{p_i},
\]
provides a natural setting for models arising in electrorheological fluids and composite materials.

Further progress was recently achieved by Sumiya Baasandorj and Sun-Sig Byun in their article  \cite{SB2023}, where they extended the double-phase approach to general Orlicz growth structures, providing local $C^{1,\alpha}$-regularity results under controlled modular conditions. Their work demonstrates that many regularity phenomena persist even when the growth is driven by a general Orlicz function, bridging the gap between polynomial, variable exponent, and multi-phase theories.\\

Due to the vastness and rapid development of modern regularity theory, it is clearly impossible to make a complete list of all contributions made to the field. As a result, there are many valuable contributions made by others not specifically addressed, and for this we thank them all. Readers interested in getting more detailed and complete information about this area of research should look at one of the many different types of survey articles or bibliography available, such as those referenced in \cite{mr3, Mingione2006}. In addition, when looking for detailed information on particular aspects of Regularity, we have provided several classic references along with their extensive lists of articles and papers. With regards to the classical Laplacian and linear elliptic theory, the fundamental works that laid the groundwork for this area include \cite{deGiorgi1957, Nash1958, Moser1960, Moser1961}. The results related to the $p$-Laplace operator and operators that exhibit $(p,q)$-growth rates can be found in a number of papers including ~\cite{ GmeinederKristensen2024, Uraltseva1968,  Li1988, Uhlenbeck1977,  DeFilippisKochKristensen2024, KristensenMingione2010, Manfredi1986, Manfredi1988, marc1, marc2, Ac1, Ac2, mg3, MG3,24,19,12,EDI,DF2019,MG2, NF, LE, MH, Pucci}. In the Orlicz setting and with other types of non-standard growth, readers should consult ~\cite{HsK1,HsK2,ACAN,Li1991,HHL21,Simon1 ,Harjulehto2017, Cupini2017, Eleuteri2016, Chlebicka2021, Chlebicka2022, mgg}. For those working on double-phase problems, a good number of developments can be found in ~\cite{Colombo2015, Colombo2015b, Baroni2016, defi,Simon1}. For those doing work on variable exponent $p(x)$-Laplacians, you can look to the following references: ~\cite{Fan1999, FanZhao1999, Acerbi2007, Coscia1999}.\\

The Musielak--Orlicz framework provides a unified and far-reaching extension of many
classical growth models by allowing the integrand \( G(x,t) \) to depend simultaneously
on the space variable and the gradient magnitude. It encompasses, as particular cases,
the variable exponent model \( G(x,t)=t^{p(x)} \), the Orlicz case \( G(x,t)=G(t) \), and
the double-phase structure \( G(x,t)=t^{p}+a(x)t^{q} \). Owing to this high level of
generality, the Musielak--Orlicz setting offers a natural and flexible analytical
framework for describing heterogeneous media and spatially dependent nonlinear
phenomena within a single variational and PDE structure. Within this general context, Hästö and Ok \cite{HsK2} established local
\( C^{1,\alpha} \)-regularity for local minimizers of Musielak--Orlicz type functionals
under remarkably weak and essentially optimal structural assumptions on \( G \). From
the variational perspective, their results are therefore strictly stronger than those
obtained in the present work.

The objective of this paper is, however, fundamentally different. Rather than studying
regularity properties of minimizers, we address the \( C^{1,\alpha} \)-regularity of weak
solutions to elliptic equations in divergence form with Musielak--Orlicz growth. This
PDE-oriented framework covers a substantially broader class of problems, including
equations that do not arise from an underlying energy functional. As a consequence, the
greater generality of the equations under consideration inevitably requires stronger
structural assumptions on the nonlinearities.

In this sense, the two contributions should be regarded as complementary. While Hästö
and Ok obtain optimal regularity results for variational problems under minimal
assumptions within the Musielak--Orlicz framework, the present work extends
\( C^{1,\alpha} \)-regularity theory to a wide family of elliptic equations beyond the
variational setting.
  \\

In this work, we consider elliptic equations of divergence form \eqref{eqs},
where the nonlinearities are controlled by a Musielak--Orlicz function \(G(x,t)\) satisfying assumptions \eqref{D22}--\eqref{GG3}. Under these conditions, we establish global \(C^{1,\alpha}\)-regularity for bounded generalized solutions. The argument combines Gehring-type higher integrability in Musielak--Orlicz spaces, comparison with frozen-coefficient problems, Campanato–Morrey estimates, and a boundary treatment covering both Dirichlet and Neumann conditions. This improves the \(C^{0,\alpha}\)-regularity result from \cite{HBO} and advances the development of a regularity theory for elliptic equations with nonstandard growth, showing that \(C^{1,\alpha}\)-regularity extends to the Musielak--Orlicz setting under suitable structural and continuity assumptions. It also provides a framework for addressing equations with spatially dependent and highly nonuniform growth. Moreover, our results extend the existing regularity theory in several directions. In contrast to the variable exponent setting \cite{Fan2007}, our framework is not restricted to power-type growth. Compared with the double-phase model \cite{Colombo2015,Baroni2016}, the spatial dependence influences the entire nonlinearity rather than only a coefficient. Relative to the Orlicz case \cite{Chlebicka2021}, our approach bridges the theories of Orlicz and variable exponent growth by allowing explicit dependence on \(x\) in the growth function. 
\subsection{Main Results}
We now present our assumptions and main results in a detailed manner, including the structural conditions, boundary settings, and the regularity conclusions for Musielak--Orlicz elliptic equations.\\

Let $\Omega$ be a bounded domain in $\mathbb{R}^n$ with \( n \geq 2 \), and let 
\[
G(x,t) = \int_0^{\lvert t\rvert} g(x,s)\, \mathrm{d}s
\]
be a generalized \textnormal{N}-function (see Section~\ref{sec2} for precise definitions) satisfying the following structural conditions:

There exist constants $1 < g^- \leq g^+ < n$ such that for all $x \in \mathbb{R}^n$ and $t > 0$:
\begin{equation}\label{D22}
1 < g^- \leq \frac{t g(x,t)}{G(x,t)} \leq g^+. \tag{$\mathcal{G}_0$}
\end{equation}

There exists $F \geq 1$ such that for almost every $x \in \mathbb{R}^n$:
\begin{equation}\label{GG1}
F^{-1} \leq G(x,1) \leq F. \tag{$\mathcal{G}_1$}
\end{equation}

There exists $\mu \in (0,1]$ such that for every ball $B \subset \mathbb{R}^n$ with $|B| \leq 1$, for every $t \in [1, |B|^{-1}]$, and for almost all $x,y \in B$:
\begin{equation}\label{GG2}
\mu G^{-1}(x,t) \leq G^{-1}(y,t). \tag{$\mathcal{G}_2$}
\end{equation}

There exists $L_0>1$ such that, for every $x_0\in\Omega$ and for all sufficiently small $R>0$ and all $t>0$,
\begin{equation}\label{GG3} 
\eta(R) := \sup_{x,y \in B_R(x_0)} \left| r(x,t) - r(y,t) \right| \leq \frac{L_0}{\lvert \ln (2R) \rvert}, \tag{$\mathcal{G}_3$}
\end{equation}
where $r(x,t) := \dfrac{t g(x,t)}{G(x,t)}$. For the definition of $G^{-1}$ see Definition \ref{definv}.

\begin{definition}[see Sect.~1, Chap.~I of \cite{OL}]\label{Definition 2.4}
We say that the boundary $\partial \Omega$ of $\Omega$ satisfies condition \eqref{AO} if there exist positive constants $m_0$ and $\theta_0$ such that for any ball $B_\rho$ with center on $\partial \Omega$ and radius $\rho \leq m_0$, and for any connected component $\Omega'_\rho$ of $B_\rho \cap \Omega$, the following inequality holds:
\begin{equation}\label{AO}
|\Omega'_\rho| \leq (1 - \theta_0)|B_\rho|. \tag{$\mathcal{A}$}
\end{equation}
\end{definition}

Consider the elliptic equation in divergence form:
\begin{equation}\label{P}
\operatorname{div} A(x, u, Du) + B(x, u, Du) = 0, \quad x \in \Omega. \tag{$\mathcal{P}$}
\end{equation}

With Dirichlet boundary condition:
\begin{equation}\label{PD}
\begin{cases}
\operatorname{div} A(x, u, Du) + B(x, u, Du) = 0, & x \in \Omega,\\
u = \phi, & \text{on } \partial\Omega,
\end{cases} \tag{$\mathcal{PD}$}
\end{equation}

or Neumann boundary condition:
\begin{equation}\label{PN}
\begin{cases}
\operatorname{div} A(x, u, Du) + B(x, u, Du) = 0, & x \in \Omega,\\
A(x, u, Du) \cdot \nu = C(x,u), & \text{on } \partial\Omega,
\end{cases} \tag{$\mathcal{PN}$}
\end{equation}
where $\nu(x)$ denotes the outward unit normal vector to $\partial\Omega$ at $x \in \partial\Omega$.

The functions
\[
A : \Omega \times \mathbb{R} \times \mathbb{R}^n \rightarrow \mathbb{R}^n,\quad
B : \Omega \times \mathbb{R} \times \mathbb{R}^n \rightarrow \mathbb{R},\quad
C : \partial\Omega \times \mathbb{R} \rightarrow \mathbb{R},\quad
\phi : \partial\Omega  \rightarrow \mathbb{R}
\]
 satisfy the following $G(x,t)$-structure conditions:

\textbf{Assumptions on $A$ :} $A = (A_1, A_2, \ldots , A_n) \in C(\overline{\Omega} \times \mathbb{R} \times \mathbb{R}^n, \mathbb{R}^n)$. For every $(x, u) \in \overline{\Omega} \times \mathbb{R}$, $A(x, u, \cdot) \in C^1(\mathbb{R}^n \setminus \{0\}, \mathbb{R}^n)$, and there exist a non-increasing continuous function $\lambda : [0, \infty) \rightarrow (0, \infty)$ and a non-decreasing continuous function $\Lambda : [0, \infty) \rightarrow (0, \infty)$ such that for all $x, x_1, x_2 \in \overline{\Omega}, u, u_1, u_2 \in \mathbb{R}, \eta \in \mathbb{R}^n \setminus \{0\}$ and $\xi = (\xi_1, \xi_2, \ldots , \xi_n) \in \mathbb{R}^n$, the following conditions are satisfied:

\begin{align}
\sum_{i,j=1}^n \frac{\partial A^i(x,u,\xi)}{\partial \xi_j} \eta_i \eta_j &\geq \lambda(|u|) \frac{g(x,|\xi|)}{|\xi|} |\eta|^2, \tag{$\mathcal{A}_1$}\label{6.31}\\
\sum_{i,j=1}^n \left| \frac{\partial A^i(x,u,\xi)}{\partial \xi_j} \right| &\leq \Lambda(|u|) \frac{g(x,|\xi|)}{|\xi|}, \tag{$\mathcal{A}_2$}\label{6.41}\\
A(x, u, 0) &=0,\tag{$\mathcal{A}_3$}\label{7.411}\\
|A(x_1, u_1, \eta)-A(x_2, u_2, \eta)| &\leq \Lambda\big(\max\{|u_1|,|u_2|\}\big)
\left(|x_1-x_2|^{\beta_1}+|u_1-u_2|^{\beta_2}\right)
\left(1+\max\left\lbrace g(x_1,|\eta|),g(x_2,|\eta|)\right\rbrace\right),\tag{$\mathcal{A}_4$}\label{7.41}
\end{align}
where $\beta_1,\beta_2\in (0,1)$.

\textbf{Assumptions on $B$:} The function $B(x, u, \eta)$ is measurable in $x$ and is continuous in $(u, \eta)$, and
\begin{align}
|B(x, u, \eta)| \leq \Lambda(|u|)(1 + G(x,|\eta|), \quad \forall (x, u, \eta) \in \overline{\Omega} \times \mathbb{R} \times \mathbb{R}^n, \tag{$\mathcal{B}$}\label{6.51}
\end{align}
where $\Lambda$ is as in the assumptions on $(A)$.

\textbf{Assumptions on $H$ :} For some $\beta_3\in (0,1)$, we assume that
\begin{equation}\label{H12}
\phi\in C^{1,\beta_3}(\partial\Omega). \tag{$\mathcal{H}$}
\end{equation}

\textbf{Assumptions on $C$ :}  $C(\cdot,\cdot) \in C(\partial\Omega \times \mathbb{R}, \mathbb{R})$ and for all $x, x_1, x_2 \in \partial\Omega$ and $u, u_1, u_2 \in \mathbb{R}$, satisfies
\begin{align}
|C(x, u)| &\leq c_1 h(x,|u|) + c_2, \tag{$\mathcal{C}$}\label{6.61}
\end{align}
\begin{equation}\label{C1}
|C(x_1, u_1) - C(x_2, u_2)| \leqslant \Lambda\left(\max\left\{|u_1|, |u_2|\right\}\right)\left(|x_1 - x_2|^{\beta_1} + |u_1 - u_2|^{\beta_2}\right),\tag{$\widetilde{\mathcal{C}}$}
\end{equation}
where $c_1,c_2$ are positive constants and $h(x,t)=\partial_{t}H(x,t)$ with \(\displaystyle{
H(x,t) }\) is a generalized \textnormal{N}-function which, for some constants $h^\pm, F_1 > 0$, satisfies
\begin{align}
F_1^{-1} \leq H(x,1) \leq F_1, &\quad \text{for all } x \in \Omega, \tag{$\mathcal{H}_1$} \label{H111}\\
g^+ \leq h^- \leq \frac{ h(x,t)t}{H(x,t)} \leq h^+\leq g_*^-:=\frac{ng^-}{n-g^-}, &\quad \text{for all } x \in \Omega,\ t > 0, \tag{$\mathcal{H}_2$} \label{H211}\\
\int_{\Omega} H(x,t) \, \mathrm{d}x < \infty, &\quad \text{for } t > 0, \tag{$\mathcal{H}_3$}\label{YoungTriple4}
\end{align}
and
\begin{equation}
G(x,t) \prec H(x,t) \prec\prec G^*(x,t), \tag{$\mathcal{H}_4$}\label{YoungTriple1}
\end{equation}
where the symbol "$\prec\prec$" will be introduced in Definition \ref{prec}, and
\[
G^*(x,t) := \int_0^t g^*(x,s)\, \mathrm{d}s
\]
is the Sobolev conjugate of $G(x,t)$ (see Subsection \ref{sec2}).

\begin{remark}
The generalized \textnormal{N}-function $H_1(x,t)$  may exhibit critical growth corresponding to the conjugate $G^*(x,t)$. In such a case, we have $h^+ = g_*^+ := \frac{n g^+}{n - g^+}$ and $h^- = g_*^- := \frac{n g^-}{n - g^-}$.
\end{remark}
\begin{example}
\label{ex:typical_examples}
The following are typical examples of functions \(A(x, u, \eta)\) satisfying assumptions \eqref{6.31}--\eqref{7.41}, along with their corresponding generalized \textnormal{N}-functions \(G(x,t)\) that fulfill conditions \eqref{D22}--\eqref{GG3}.
\begin{enumerate}
    \item Let \(A(x, u, \eta) = |\eta|^{p-2}\eta\). The corresponding function is \(G(x,t) = |t|^p\), where \(1 < p < n\).

    \item Let \(A(x, u, \eta) = \dfrac{g(|\eta|)}{|\eta|}\eta\) for \(\eta \neq 0\) and \(A(x, u, 0) = 0\). The corresponding function is \(G(x,t) = G(t)\), which is independent of \(x\) and satisfies
    \[
    1 < g^- \leq \frac{t g(t)}{G(t)} \leq g^+ < n \quad \text{for all } t > 0.
    \]

    \item Let \(A(x, u, \eta) = |\eta|^{p(x)-2}\eta\). The corresponding function is \(G(x,t) = |t|^{p(x)}\), where the exponent \(p: \Omega \to \mathbb{R}\) is a measurable function satisfying
    \begin{equation}\label{var111}
        1 < p^- := \inf_{x \in \overline{\Omega}} p(x) \leq p^+ := \sup_{x \in \overline{\Omega}} p(x) < n,
    \end{equation}
    and is log-Hölder continuous:
    \begin{equation}\label{var1}
        |p(x) - p(y)| \leq \frac{C}{\ln\left(e + |x-y|^{-1}\right)} \quad \text{for all } x, y \in \overline{\Omega}.
    \end{equation}

    \item Let \(A(x, u, \eta) = a(x, u) |\eta|^{p(x)-2}\eta\). The corresponding function is \(G(x,t) = \Lambda|t|^{p(x)}\), where \(p(x)\) satisfies \eqref{var111} and \eqref{var1}, and the coefficient \(a(x, u)\) is Hölder continuous in \((x, u)\) and bounded below by \(a(x, u) \geq \delta > 0\).

    \item Let \(A(x, u, \eta) = |\eta|^{p(x)-2} \ln(e + |\eta|) \eta\). The corresponding function is \(G(x,t) = |t|^{p(x)} \ln(e + |t|)\), where \(p(x)\) satisfies \eqref{var111} and \eqref{var1}.

    \item Let \(A(x, u, \eta) = |\eta|^{p(x)-2}\eta + |\eta|^{q(x)-2}\eta\). The corresponding function is the double-phase functional \(G(x,t) = |t|^{p(x)} + |t|^{q(x)}\), where the exponents \(p, q: \Omega \to \mathbb{R}\) are measurable functions satisfying:
    \begin{gather}
        p, q \in C^{0,1}(\overline{\Omega}), \label{var20} \\
        1 < p(x) \leq q(x) < n \quad \text{for all } x \in \overline{\Omega}, \label{var21} \\
        \frac{q^+}{p^-} < 1 + \frac{1}{n}, \label{var22} \\
        q(x) - p(x) \leq c \quad \text{for all } x \in \overline{\Omega} \text{ and some } c > 0. \label{double22}
    \end{gather}

    \item Let \(A(x, u, \eta) = |\eta|^{p-2}\eta + a(x) |\eta|^{q-2}\eta\). The corresponding function is \(G(x,t) = |t|^p + a(x) |t|^q\), where \(1 < p < q < n\), and the weight \(a \in L^\infty(\Omega)\) is non-negative and satisfies:
    \begin{equation}\label{double2}
        q \leq p + \alpha \quad \text{for some } \alpha \in (0,1],
    \end{equation}
    and the Hölder condition
    \begin{equation}\label{double3}
        a \in C^{0,\alpha}(\Omega).
    \end{equation}

    \item Let \(A(x, u, \eta) = |\eta|^{p-2}\eta + a(x) |\eta|^{q-2} \ln(e + |\eta|) \eta\). The corresponding function is \(G(x,t) = |t|^p + a(x) |t|^q \ln(e + |t|)\), where \(1 < p < q < n\) and \(a(x)\) satisfies \eqref{double2} and \eqref{double3}.
\end{enumerate}
\end{example}

\begin{definition}
\begin{enumerate}
    \item[$(1)$] $u \in W^{1,G(x,t)}(\Omega)$ is called a bounded generalized solution of problem \eqref{P} if $u \in L^\infty(\Omega)$ and
\begin{equation}\label{FDV}
\int_{\Omega} A(x, u, Du) \cdot Dv \, \mathrm{d}x = \int_{\Omega} B(x, u, Du) v \, \mathrm{d}x, \quad \forall v \in W_0^{1,G(x,t)}(\Omega) \cap L^\infty(\Omega). \tag{$\mathcal{BDP}$}
\end{equation}

\item[$(2)$] $u \in W^{1,G(x,t)}(\Omega)$ is called a bounded generalized solution of the boundary value problem \eqref{PD} if $u \in L^\infty(\Omega)$, $u-\phi \in W_0^{1,G(x,t)}(\Omega)$ and \eqref{FDV} holds.

\item[$(3)$] $u \in W^{1,G(x,t)}(\Omega)$ is called a bounded generalized solution of the boundary value problem \eqref{PN} if $u \in L^\infty(\Omega)$ and
\begin{equation}\label{FNV}
\int_{\Omega} A(x, u, Du) \cdot Dv \, \mathrm{d}x = \int_{\Omega} B(x, u, Du) v \, \mathrm{d}x + \int_{\partial \Omega} C(x, u) v \, \mathrm{d}s, \quad \forall v \in W^{1,G(x,t)}(\Omega) \cap L^\infty(\Omega). \tag{$\mathcal{BNP}$}
\end{equation}
\end{enumerate}
\end{definition}

Our main results are Theorems \ref{Thm1.1}--\ref{Thm1.3}, in which we assume that conditions \eqref{6.31}--\eqref{6.41}, \eqref{6.51}, \eqref{6.61}--\eqref{C1}, \eqref{D22}--\eqref{GG3}, \eqref{H12}, and \eqref{H111}--\eqref{YoungTriple1} are satisfied, and there exists a positive constant $M$ such that the bounded generalized solution $u$ mentioned in Theorems \ref{Thm1.1}--\ref{Thm1.3} satisfies
\begin{equation}\label{M}
\sup_{\Omega} |u(x)| := \mathrm{ess\,sup}_{\Omega} |u(x)| \leq M. \tag{$\mathcal{M}$} 
\end{equation}

\begin{theorem}\label{Thm1.1}
Under assumptions \eqref{6.31}--\eqref{7.41}, \eqref{6.51}, and \eqref{D22}--\eqref{GG3}, if $u \in W^{1,G(x,t)}(\Omega) \cap L^{\infty}(\Omega)$ is a bounded generalized solution of problem \eqref{P} satisfying \eqref{M}, then $u \in C^{1,\alpha}_{\mathrm{loc}}(\Omega)$. The Hölder exponent $\alpha$ depends only on  $n, g^-, g^+, \lambda_0, \Lambda_0, F,M,\beta_1,\beta_2$, and $L:=\exp{L_0}$, and for any $\Omega_0 \Subset \Omega$, the norm $\Vert u\Vert_{C^{1,\alpha}(\overline{\Omega_0})}$ depends only on  $n, g^-, g^+, \lambda_0,$ $ \Lambda_0, F,M,\beta_1,\beta_2$, and $\mathrm{dist}(\Omega_0, \partial\Omega)$.
\end{theorem}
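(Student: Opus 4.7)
The plan is to follow the classical perturbation-and-comparison scheme, adapting it to the Musielak--Orlicz setting. First I would establish a Caccioppoli inequality on balls $B_R \Subset \Omega$ by testing the weak form \eqref{FDV} against $\eta^{g^+}(u-(u)_{B_R})$ with a standard cutoff, using the ellipticity and growth \eqref{6.31}--\eqref{6.41} of $A$, the critical-growth bound \eqref{6.51} on $B$ (whose right-hand side is absorbable thanks to the uniform bound \eqref{M}), and Young's inequality in the Musielak--Orlicz form. Combining this with a Sobolev--Poincaré inequality in $W^{1,G(x,t)}$ (available under \eqref{D22}--\eqref{GG3}) gives a reverse Hölder inequality for $G(x,|Du|)$, and a Gehring-type self-improvement lemma in Musielak--Orlicz spaces then yields the higher integrability
\[
\left(\fint_{B_{R/2}} G(x,|Du|)^{1+\delta_0}\,dx\right)^{\frac{1}{1+\delta_0}} \leq C\fint_{B_R} G(x,|Du|)\,dx + C,
\]
for a small $\delta_0 > 0$ depending only on the structural data.

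Next I would perform the freezing step. For a ball $B_R(x_0) \Subset \Omega$ with $R$ small, introduce the frozen operator $A_0(\xi) := A(x_0, (u)_{B_R}, \xi)$ and compare $u$ to the unique solution $v \in u + W^{1,G(x_0,t)}_0(B_R)$ of $\operatorname{div} A_0(Dv)=0$. Testing the equation satisfied by $u-v$ against itself, and using monotonicity of $A_0$ together with the $(x,u)$-Hölder control \eqref{7.41} on $A$, the oscillation bound \eqref{GG3} on $r(x,t)$ (which allows one to switch between $G(x,\cdot)$ and $G(x_0,\cdot)$ on $B_R$ via \eqref{GG2}), and the higher integrability above (through Hölder's inequality), I would derive a comparison estimate of the form
\[
\fint_{B_R} G\bigl(x_0, |Du - Dv|\bigr)\,dx \leq C\,\omega(R)\fint_{B_R} G(x,|Du|)\,dx + CR^{\gamma},
\]
for a modulus $\omega(R) \to 0$ as $R \to 0$ and some $\gamma > 0$.

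The frozen problem $\operatorname{div} A_0(Dv)=0$ falls into the standard Orlicz setting governed by $G(x_0,\cdot)$, so by Lieberman's $C^{1,\alpha_0}$-regularity theory for Orlicz equations \cite{Li1991} (or its recent refinements in \cite{SB2023}) one obtains the classical excess decay
\[
\fint_{B_\rho(x_0)} G\bigl(x_0, |Dv - (Dv)_{B_\rho}|\bigr)\,dx \leq C\Bigl(\tfrac{\rho}{R}\Bigr)^{\alpha_0 g^-}\fint_{B_R} G(x_0,|Dv|)\,dx
\]
for all $\rho \leq R$. Summing this decay for $Dv$ with the comparison bound for $Du - Dv$, switching back to $G(x,\cdot)$ via \eqref{GG2}--\eqref{GG3}, and iterating over dyadic scales by a standard Campanato-type lemma (choosing $R$ small enough that $\omega(R)$ is absorbed at each step), I would reach a Morrey-type estimate $\fint_{B_\rho} |Du - (Du)_{B_\rho}|^{g^-}dx \leq C\rho^{g^-\alpha}$ for some $\alpha \in (0,\alpha_0)$, and Campanato's integral characterization then yields $u \in C^{1,\alpha}_{\mathrm{loc}}(\Omega)$ with the stated dependence of $\alpha$ on the structural constants.

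The principal obstacle is precisely the comparison step: the lack of an $x$-independent integrand forces one to pass between $G(x,\cdot)$ and $G(x_0,\cdot)$ on every ball, and the log-type control \eqref{GG3} combined with \eqref{GG2} is quantitatively tight. This is exactly what produces the factor $L := \exp L_0$ in the dependence of the final Hölder exponent, since the admissible radii must be chosen to satisfy $R \lesssim e^{-L_0}$-type thresholds. A closely related secondary difficulty is the critical-growth lower-order term $B$, whose bound $|B|\leq \Lambda(|u|)(1+G(x,|\eta|))$ can only be absorbed because of \eqref{M}; the constant $M$ must therefore be tracked carefully through every Caccioppoli and comparison estimate, which is why it appears explicitly in the dependencies of $\alpha$ and of $\|u\|_{C^{1,\alpha}(\overline{\Omega_0})}$.
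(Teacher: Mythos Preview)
Your overall architecture matches the paper's: higher integrability via Caccioppoli plus Sobolev--Poincar\'e plus Gehring, then freezing to an Orlicz problem, comparison estimate, Lieberman's decay for the frozen solution, and Campanato iteration. But there is a genuine gap at two linked places.

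You assert that the lower-order term $B$ is ``absorbable thanks to the uniform bound \eqref{M}''. It is not. Testing with $\eta^{g^+}(u-(u)_{B_R})$ produces from \eqref{6.51} a term of size $\Lambda_0\int \eta^{g^+} G(x,|Du|)\,|u-(u)_{B_R}|\,dx$, and this can be absorbed into the leading $\lambda_1\int \eta^{g^+} G(x,|Du|)\,dx$ only when $\Lambda_0\,\mathrm{osc}_{B_{2R}}u$ is \emph{small}, not merely bounded by $2M\Lambda_0$. The same issue reappears in your comparison step: assumption \eqref{7.41} contributes the factor $|u(x)-(u)_{B_R}|^{\beta_2}$, which does not tend to zero with $R$ under \eqref{M} alone, so your claimed $\omega(R)\to 0$ is unjustified. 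The paper closes both gaps by first invoking Proposition~\ref{prop1.1} (the $C^{0,\alpha_1}$ result of \cite{HBO}) to obtain a priori H\"older continuity of $u$; one then chooses $R_1$ so small that $\mathrm{osc}_{B_{2R_1}}u\le \lambda_1/(4\Lambda_1)$ for the Caccioppoli absorption, and controls $|u(x)-u(x_*)|^{\beta_2}\le CR^{\alpha_1\beta_2}$ in the comparison. This is precisely why the paper's decay exponent is $\beta=\min\{\beta_1,\alpha_1\beta_2\}$, and why the intermediate $C^{0,\alpha_1}$ step is not optional.

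A smaller point: you freeze at $(x_0,(u)_{B_R})$, whereas the paper freezes at $(x_*,u(x_*))$ with $x_*$ chosen so that $G(x_*,\cdot)=\sup_{B_{2R}}G(\cdot,\cdot)$. The supremum choice is not cosmetic: it gives $G(x,t)\le G(x_*,t)$ on $B_{2R}$ pointwise, which together with the calibration $(1+G^+)^{1+\delta_0/2}\le(1+G^-)^{1+\delta_0}$ and Lemma~\ref{Lem1.1} lets one pass from $G(x_*,|Du|)$ back to $G(x,|Du|)$ without ever invoking \eqref{GG2} on $G$-values outside its range of validity $t\in[1,|B|^{-1}]$. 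Your proposed route ``switch between $G(x,\cdot)$ and $G(x_0,\cdot)$ via \eqref{GG2}'' needs an additional argument to cover large values of $|Du|$.
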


\begin{theorem}\label{Thm1.2}
Under assumptions \eqref{6.31}--\eqref{7.41}, \eqref{6.51}, \eqref{D22}--\eqref{GG3}, and \eqref{H12}, and assuming the boundary $\partial\Omega$ is of class $C^{1,\beta_3}$, if $u \in W^{1,G(x,t)}(\Omega) \cap L^{\infty}(\Omega)$ is a bounded generalized solution of the boundary value problem \eqref{PD} satisfying \eqref{M}, then $u \in C^{1,\alpha}(\overline{\Omega})$. The exponent $\alpha$ and the norm $\Vert u\Vert_{C^{1,\alpha}(\overline{\Omega})}$ depend only on $n, g^-, g^+, \lambda_0, \Lambda_0, F,L, M, \beta_1,\beta_2,\beta_3$, and $\Omega$.
\end{theorem}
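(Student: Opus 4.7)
The plan is to reduce Theorem~\ref{Thm1.2} to a boundary version of the Campanato--Morrey argument already used for the interior statement in Theorem~\ref{Thm1.1}. Since $u\in C^{1,\alpha}_{\mathrm{loc}}(\Omega)$ is given for free by Theorem~\ref{Thm1.1}, only the estimates in a neighborhood of $\partial\Omega$ require new input. First I would flatten the boundary locally: for each $x_0\in\partial\Omega$, the $C^{1,\beta_3}$ regularity of $\partial\Omega$ provides a diffeomorphism $\Psi$ of class $C^{1,\beta_3}$ mapping a neighborhood of $x_0$ to a half-ball $B_R^+=B_R\cap\{y_n>0\}$ and $\partial\Omega$ to the flat piece $B_R\cap\{y_n=0\}$. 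Next, I would extend the Dirichlet datum $\phi\in C^{1,\beta_3}(\partial\Omega)$ to some $\Phi\in C^{1,\beta_3}(\overline{\Omega})$ (standard Whitney/trace extension) and set $v:=u-\Phi$, so that after composing with $\Psi^{-1}$ one obtains a new function $\tilde v$ on $B_R^+$ vanishing on the flat portion $T_R:=B_R\cap\{y_n=0\}$. The key preliminary check is that the pulled-back equation still satisfies the structure hypotheses \eqref{6.31}--\eqref{7.41}, \eqref{6.51} with respect to the same $G(x,t)$: the Jacobian of $\Psi$ is $C^{0,\beta_3}$, so ellipticity and growth constants change by a bounded factor, and the extra lower-order contributions coming from $D\Phi\in C^{0,\beta_3}$ and from the non-constant Jacobian can be absorbed into $A$ and $B$ while preserving \eqref{D22}--\eqref{GG3}, because these properties depend only on the growth of $G(x,t)$ in $t$ and on the $x$-modulus $\eta(R)$.

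Once the problem is reduced to a Dirichlet problem on $B_R^+$ with $\tilde v=0$ on $T_R$, I would run the frozen-coefficient comparison scheme in parallel with the interior case. For $y_0\in T_R$ and small $\rho$, let $w$ solve the homogeneous frozen problem
\begin{equation*}
\begin{cases}
\operatorname{div}\widetilde A_0(Dw)=0 & \text{in }B_\rho^+(y_0),\\
w=\tilde v & \text{on }\partial B_\rho^+(y_0),
\end{cases}
\end{equation*}
where $\widetilde A_0$ is the nonlinearity with $x$ frozen at $y_0$, $u$ frozen at $\tilde v(y_0)$, and the ambient $G$ replaced by a suitable Orlicz function $G_0(t):=G(y_0,t)$ (using \eqref{GG2}--\eqref{GG3} to control the freezing error). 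The decisive ingredient is the boundary $C^{1,\alpha}$ estimate for $w$ on the half-ball with zero trace on the flat part: this can be obtained by odd reflection through $T_\rho$, which converts the homogeneous Dirichlet problem into an interior problem for an equation with the same Orlicz structure, so that the Uhlenbeck--Lieberman type $C^{1,\alpha}$ estimate already used in the proof of Theorem~\ref{Thm1.1} applies directly and yields the excess decay
\begin{equation*}
\fint_{B_r^+(y_0)} |Dw-(Dw)_{B_r^+(y_0)}|^2\,\mathrm{d}x \le C\Bigl(\tfrac{r}{\rho}\Bigr)^{2\alpha_0}\fint_{B_\rho^+(y_0)}|Dw|^2\,\mathrm{d}x,\qquad 0<r\le\rho.
\end{equation*}

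With this in hand, the remaining steps mirror the interior argument: a Gehring-type boundary higher integrability estimate (in the Musielak--Orlicz scale) allows me to quantify $\|D\tilde v-Dw\|_{L^2(B_\rho^+)}$ in terms of the oscillation of the coefficients on $B_\rho^+$, which by \eqref{7.41} and \eqref{GG3} is controlled by $\rho^{\beta_1}$, the Hölder norm of $\Phi$, and $\eta(\rho)$. Combining the comparison error with the decay for $w$, a standard iteration yields the boundary Campanato estimate
\begin{equation*}
\fint_{B_r^+(y_0)} |D\tilde v-(D\tilde v)_{B_r^+(y_0)}|^2\,\mathrm{d}x \le C r^{2\alpha}
\end{equation*}
for some $\alpha\in(0,\min\{\alpha_0,\beta_1,\beta_2,\beta_3\})$ and all sufficiently small $r$. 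A covering of $\overline{\Omega}$ by interior balls (handled by Theorem~\ref{Thm1.1}) and boundary charts, together with Campanato's characterization of $C^{0,\alpha}$, then upgrades this to $u\in C^{1,\alpha}(\overline{\Omega})$ with a global norm bound depending only on the quantities listed in the statement. The main obstacle I anticipate is the careful verification that the flattening and the subtraction of $\Phi$ preserve the full set of structural assumptions \eqref{D22}--\eqref{GG3}, \eqref{6.31}--\eqref{7.41}, \eqref{6.51}, because the new nonlinearity $\widetilde A(y,\tilde v,D\tilde v)=(D\Psi)A(\Psi^{-1}(y),\tilde v+\Phi,(D\Psi)^{\top}(D\tilde v+D\Phi))|\det D\Psi^{-1}|$ contains lower-order terms with $D\Phi$ that must be absorbed without spoiling the $G$-growth, and the log-Hölder condition \eqref{GG3} must survive the change of variables; once this is checked, the rest of the argument is a by-now-classical boundary adaptation of the interior scheme.
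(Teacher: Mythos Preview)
Your overall strategy---interior result plus boundary Campanato iteration via comparison with a frozen autonomous problem---is the same as the paper's, but the route you take for the key boundary lemma diverges, and one step contains a genuine gap.

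First, the difference in route. The paper does \emph{not} flatten the boundary and does not subtract the extension $\Phi$. It works directly on $\Omega_R=B_R\cap\Omega$ and, for the frozen Dirichlet problem \eqref{FP1} with $v=u$ on all of $\partial\Omega_R$, it invokes Lieberman's boundary $C^{1,\lambda}$ theory for autonomous Orlicz-growth equations on $C^{1,\beta_3}$ domains \cite{Li1991,Li1993} as a black box (Lemma~\ref{Lem2.21}). The Dirichlet datum $\phi$ enters only through harmless additive terms $G(x_*,M_\phi)$ and $G(x_*,M_\phi R^{\gamma_3})$ in the sup and oscillation estimates \eqref{eq4.10}--\eqref{eq4.11}. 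This sidesteps entirely the verification you correctly flag as your ``main obstacle,'' namely whether flattening and subtraction of $\Phi$ preserve \eqref{D22}--\eqref{GG3} and \eqref{6.31}--\eqref{7.41}. The rest of the paper's argument (boundary Gehring lemma, comparison estimate, Morrey decay, Campanato) is exactly parallel to your plan.

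Now the gap. Your odd-reflection step does not go through under the stated hypotheses. For the odd extension $\bar w(y',y_n)=-w(y',-y_n)$ to solve $\operatorname{div}\widetilde A_0(D\bar w)=0$ across $\{y_n=0\}$, the frozen operator must satisfy $\widetilde A_0(S\xi)=S\,\widetilde A_0(\xi)$ with $S=\mathrm{diag}(-1,\dots,-1,1)$. This holds for radial operators $A_0(\xi)=g(|\xi|)\xi/|\xi|$, but \eqref{6.31}--\eqref{7.41} do not force $A(x_*,u(x_*),\cdot)$ to be radial, and after composing with a generic $C^{1,\beta_3}$ flattening map the transformed nonlinearity is essentially never radial even if the original one was. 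So the reduction to ``an interior problem for an equation with the same Orlicz structure'' fails as written; you would need to establish the half-ball $C^{1,\alpha}$ estimate for the frozen problem directly, which is precisely the content of Lieberman's boundary results the paper quotes.

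A smaller point: your excess-decay inequalities are written in $L^2$, but in this setting the comparison and decay must stay in the modular scale $G(x_*,|Dw-(Dw)_\rho|)$; see \eqref{eq4.11} and the passage from \eqref{eq3.21} to \eqref{eq3.22}. Converting to $L^{g^-}$ happens only at the very end, and an intermediate $L^2$ step is not available in general.
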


\begin{theorem}\label{Thm1.3}
Assume that $\partial\Omega$ is of class $C^{1,\beta_3}$ and that the following conditions are satisfied: 
the structural conditions \eqref{6.31}--\eqref{7.41}, \eqref{6.51}, \eqref{6.61}--\eqref{C1}, \eqref{D22}--\eqref{GG3}, and \eqref{H111}--\eqref{YoungTriple1}. If $u \in W^{1,G(x,t)}(\Omega) \cap L^{\infty}(\Omega)$ is a bounded generalized solution of the Neumann boundary value problem \eqref{PN} satisfying the bound \eqref{M}, then $u \in C^{1,\alpha}(\overline{\Omega})$. 
Moreover, the Hölder exponent $\alpha$ and the norm $\|u\|_{C^{1,\alpha}(\overline{\Omega})}$ depend only on the parameters $n, g^-, g^+, \lambda_0, \Lambda_0, F,L, M,h^+,\beta_1,\beta_2,\beta_3$, the quantity $\sup |C(\partial\Omega \times [-M, M])|$, and the domain $\Omega$.
\end{theorem}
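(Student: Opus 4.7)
Since Theorem~\ref{Thm1.1} already supplies $C^{1,\alpha}_{\mathrm{loc}}(\Omega)$, the task reduces to establishing $C^{1,\alpha}$ up to $\partial\Omega$ in the Neumann setting. My plan is to combine (i) a local boundary flattening via $C^{1,\beta_3}$-diffeomorphisms, (ii) reduction of the flattened homogeneous Neumann problem to an interior problem by even reflection, (iii) a boundary Caccioppoli inequality together with a Gehring-type self-improvement, (iv) comparison with a frozen constant-coefficient Neumann problem, and (v) a Campanato-type excess decay for $V(D\tilde u)$. The overall architecture parallels the Dirichlet proof of Theorem~\ref{Thm1.2}, with essential modifications dictated by the nonlinear flux $C(x,u)$ appearing on $\partial\Omega$.

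\textbf{Flattening and reflection.} For each $x_0\in\partial\Omega$, using the $C^{1,\beta_3}$-regularity of $\partial\Omega$, I introduce a local diffeomorphism $\Psi$ straightening $\partial\Omega\cap B_{R_0}(x_0)$ into a flat piece of $\{y_n=0\}$. The pullback $\tilde u:=u\circ\Psi^{-1}$ satisfies a divergence equation with transformed data $(\tilde A,\tilde B,\tilde C,\tilde G)$ enjoying the same structural conditions \eqref{6.31}--\eqref{7.41}, \eqref{6.51}, \eqref{6.61}--\eqref{C1}, and \eqref{D22}--\eqref{GG3}, with constants depending on $\|\Psi\|_{C^{1,\beta_3}}$. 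The boundary condition becomes $\tilde A(y,\tilde u,D\tilde u)\cdot e_n=\tilde C(y,\tilde u)$ on the flat part. For the frozen problem, homogeneous Neumann data combined with the autonomous structure permits even reflection of solutions across $\{y_n=0\}$, producing an interior solution of the same type on a full ball; regularity of the frozen solution then follows from Theorem~\ref{Thm1.1}.

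\textbf{Caccioppoli, higher integrability, and comparison.} A boundary Caccioppoli estimate is obtained by testing \eqref{FNV} with $v=\eta^{g^+}(\tilde u-k)$ supported in a half-ball adjacent to the flat boundary. The boundary integral $\int_\Gamma \tilde C(y,\tilde u)\,v\,\mathrm{d}s$ is absorbed using the Musielak--Orlicz trace inequality, where the strict subcriticality $H\prec\prec G^*$ in \eqref{YoungTriple1} is decisive: it provides the spare integrability needed to absorb the flux contribution into the interior Musielak--Orlicz energy via Young's inequality. A Gehring-type reverse Hölder argument then upgrades $G(y,|D\tilde u|)$ to $L^{1+\varepsilon}$ locally near the flattened boundary. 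Comparing $\tilde u$ with the solution $w$ of the frozen-coefficient Neumann problem in a half-ball, monotonicity \eqref{6.31} together with the Hölder continuities \eqref{7.41}, \eqref{C1}, and the log-Hölder modulus \eqref{GG3}, yields a quantitative closeness estimate of the form $\int_{B_r^+}|V(D\tilde u)-V(Dw)|^2\,\mathrm{d}y\leq \omega(r)\int_{B_r^+} G(y,|D\tilde u|)\,\mathrm{d}y$ with $\omega(r)\to 0$, where the higher integrability from the previous step is used to absorb remainder terms of the form $\int G(y,|D\tilde u|)^{1+\delta}$.

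\textbf{Decay, iteration, and main obstacle.} Combining the up-to-the-flat-boundary $C^{1,\alpha_0}$-estimate for $w$ (obtained from its reflection and Theorem~\ref{Thm1.1}) with the comparison estimate produces a Campanato-type excess decay for $V(D\tilde u)$ on dyadic half-balls. Iterating this decay and applying Campanato's integral characterization gives $V(D\tilde u)\in C^{0,\alpha}$ up to the flat boundary, hence $D\tilde u\in C^{0,\alpha}$; transporting back by $\Psi$, combining with the interior result of Theorem~\ref{Thm1.1}, and covering $\partial\Omega$ by finitely many charts delivers $u\in C^{1,\alpha}(\overline\Omega)$ with the stated quantitative dependence. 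The main obstacle is the analysis of the nonlinear boundary flux $C(x,u)$: both the Caccioppoli and the comparison arguments require absorbing boundary integrals into the Musielak--Orlicz bulk energy, which forces a careful interplay between the subcritical embedding $H\prec\prec G^*$, trace theory in variable-exponent Orlicz spaces, and the non-uniform moduli \eqref{GG2}--\eqref{GG3}. In particular, ensuring that $\omega(r)$ quantitatively captures the joint effect of the boundary Hölder oscillation $(\widetilde{\mathcal{C}})$, the $x$-dependence of $G$, and the scale $r$, while remaining compatible with the reflection step, is the most delicate point of the argument.
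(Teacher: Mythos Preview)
Your overall architecture---flattening, reflection for higher integrability, Caccioppoli, Gehring, frozen comparison, Campanato iteration---mirrors the paper's closely. But there is a genuine gap in your choice of frozen problem, and it breaks the $C^{1,\alpha}$ conclusion.

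You freeze to a \emph{homogeneous} Neumann problem so that even reflection produces an interior solution to which Theorem~\ref{Thm1.1} applies. The price is that in the comparison $I=\int_{Q_R^+}(\overline A(Du)-\overline A(Dw))\cdot(Du-Dw)$, the boundary contribution is $\int_{\Sigma}C(x,u)(u-w)\,\mathrm{d}s$ with the \emph{full} flux $C(x,u)$, not its oscillation. Bounding $|C|\le\widetilde M$ and using the trace/Young argument gives at best an additive error $c\,R^n$ (or $c\,R^{n-1+\alpha_1}$ via the oscillation bound $\sup|u-w|\le\operatorname{osc}u$), with no factor tending to zero. When you insert this into the Campanato iteration with the scale relation $R=(2\rho)^{1/(1+\theta)}$, that term becomes $c\,\rho^{n/(1+\theta)}$ with exponent strictly \emph{below} $n$, so the excess does not decay like $\rho^{n+\varepsilon}$ and the argument stalls at $C^{0,\alpha}$ for every $\alpha<1$, not $C^{1,\alpha}$. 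In short, your $\omega(r)$ cannot absorb this term: it does not go to zero.

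The paper avoids this by freezing the Neumann data to the constant $C_*=C(x_*,u(x_*))$ and considering the mixed problem \eqref{mixed-BVP}. Then the boundary error in the comparison is $\int_{\Sigma}\bigl(C(x,u)-C_*\bigr)(u-v)\,\mathrm{d}s$, and condition \eqref{C1} makes $|C(x,u)-C_*|\le\theta(R)$ with $\theta(R)\to 0$ (indeed $\theta(R)\lesssim R^{\min(\beta_1,\alpha_1\beta_2)}$), which is exactly the extra decay needed. The regularity of this inhomogeneous frozen problem is not obtained by reflection---that would fail for $C_*\neq 0$---but by invoking Lieberman's Orlicz-growth boundary theory (Lemma~\ref{Lem3.2}). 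Your reflection idea is used in the paper only for the higher-integrability step (Lemma~\ref{Lem3.1}), where one reflects $u$ itself to run Gehring on full cubes.

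A secondary point: your emphasis on $H\prec\prec G^*$ as ``decisive'' for absorbing the boundary term in the Caccioppoli step is misplaced. Since $|u|\le M$, one simply has $|C(x,u)|\le\widetilde M$, and the elementary trace bound $\int_{\Sigma}|\varphi|\,\mathrm{d}s\le c\int|D\varphi|\,\mathrm{d}x$ suffices, exactly as in the paper. The subcritical growth of $H$ is used upstream for the $C^{0,\alpha_1}$ result (Proposition~\ref{prop1.1}(3)), not in the gradient estimates themselves.
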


\begin{remark}
In Theorems \ref{Thm1.2} and \ref{Thm1.3}, the Hölder exponent $\alpha$ is independent of $\Omega$. When $\Omega$ is convex, the norm $\Vert u\Vert_{C^{1,\alpha}(\overline{\Omega})}$ depends on $\mathrm{diam}(\Omega)$.
\end{remark}


\subsection{Paper Structure}

Section 2 introduces the necessary background on Musielak-Orlicz and Musielak-Orlicz-Sobolev spaces, including key embeddings and preliminary results. Section 3 is devoted to the proof of interior $C^{1,\alpha}$ regularity (Theorem \ref{Thm1.1}), while Sections 4 and 5 establish boundary regularity for Dirichlet (Theorem \ref{Thm1.2}) and Neumann (Theorem \ref{Thm1.3}) problems, respectively. The proofs combine modern techniques in regularity theory with careful adaptations to handle the generality of the Musielak-Orlicz framework.

\section{Notations and Preliminaries}
Here, we introduce the fundamental notations  and definitions used throughout the paper. A central concept is the Musielak-Orlicz-Sobolev space, whose properties are essential for the results that follow.
\subsection{Notations}
The following notations will be used throughout this paper:
\begin{itemize}
    \item $\mathbb{R}^n$ denotes the $n$-dimensional Euclidean space.
    \item $\Omega \subset \mathbb{R}^n$ is a bounded domain.
    \item The notation \(\Omega_0 \Subset \Omega\) denotes an open subset whose
closure is compact and contained in \(\Omega\).

    \item For a measurable set $E \subset \mathbb{R}^n$, $\mathrm{mes}(E)$ or $|E|$ denotes its $n$-dimensional Lebesgue measure.
\item \(\fint_{E} u \mathrm{d}x = \frac{1}{|E|} \int_E u \mathrm{d}x\).
\item  $\mathbb{R}^n = \mathbb{R}^{n-1} \times \mathbb{R}$.  
\item $\mathbb{R}^n_+ = \left\{(x', x'') \in \mathbb{R}^{n-1} \times \mathbb{R}: x'' > 0\right\}$ is the upper half-space and  $\mathbb{R}_0^n = \mathbb{R}^{n-1} \times \{0\}$ is the boundary hyperplane.
    \item If $u$ is a measurable function defined on $\Omega$, and $E \subset \Omega$ is measurable, we define
    \begin{equation}\label{os1}
    \max_E u := \mathop{\mathrm{ess\,sup}}_{x \in E} u(x), \quad
    \min_E u := \mathop{\mathrm{ess\,inf}}_{x \in E} u(x), \quad
    \mathrm{osc}_E u := \max_E u - \min_E u.
    \end{equation}
    \item For any $x_0 \in \mathbb{R}^n$ and $\rho > 0$, we define the open ball and the cube respectively
    \[
    B(x_0,\rho) := \{x \in \mathbb{R}^n : |x - x_0| < \rho\},
    \]
and
   \[
Q(x_0, \rho):=\{x = (x^1, x^2, \ldots, x^n) \in \mathbb{R}^n: |x^i - x_0^i| < \rho,\ \forall i\}
   \] 
    which we may denote simply, respectively, by $B_\rho$ and $Q_\rho$ when the center is clear from context. We denote by $\omega_n := |B_1|$ the measure of the unit ball in $\mathbb{R}^n$.
     \item For any $x_0 \in \mathbb{R}^n$ and $\rho > 0$, we denote \( \Omega(x_0,\rho):= B(x_0,\rho)\cap \Omega\).
    \item For $0 < \alpha \leq 1$, we denote:
    \[
    C^{0,\alpha}(\overline{\Omega}) := \left\{ u : u \text{ is Hölder continuous on } \overline{\Omega} \text{ with exponent } \alpha \right\},
    \]
    with norm
    \[\|u\|_{C^{0,\alpha}(\overline{\Omega})} := \sup_{x \in \overline{\Omega}} |u(x)| 
\;+\; \sup_{\substack{x,y \in \overline{\Omega} \\ x \neq y}} \frac{|u(x) - u(y)|}{|x-y|^\alpha}\]
    \[
    C_{\text{loc}}^{0,\alpha}(\Omega) := \left\{ u : u \in C^{0,\alpha}(\overline{\Omega'}) \text{ for all } \Omega' \Subset \Omega \right\},
    \]
    \[
C^{1,\alpha}(\overline{\Omega}) := \left\{ u : u \in C^1(\overline{\Omega}), \text{ and } D u \text{ is H\"older continuous on } \overline{\Omega} \text{ with exponent } \alpha \right\},
\]
with norm

\[\|u\|_{C^{1,\alpha}(\overline{\Omega})} := 
\sup_{\overline{\Omega}} |u| 
+ \sup_{\overline{\Omega}} |\nabla u| 
+ \sup_{x \neq y} \frac{|\nabla u(x) - \nabla u(y)|}{|x-y|^\alpha}.\]
\[
C_{\text{loc}}^{1,\alpha}(\Omega) := \left\{ u : \Omega \to \mathbb{R} \ \middle| \ \forall x \in \Omega, \ \exists \text{ neighborhood } U_x \text{ of } x \text{ such that } u \in C^{1,\alpha}(U_x) \right\},
\]
\[
C(\overline{\Omega}) := \left\{ u : \overline{\Omega} \to \mathbb{R} \ \middle| \ u \text{ is continuous on } \overline{\Omega} \right\},
\]
\[
C^\infty(\overline{\Omega}) := \left\{ u : \overline{\Omega} \to \mathbb{R} \ \middle| \ u \text{ has continuous partial derivatives of all orders on } \overline{\Omega} \right\},
\]
\[
C_0^\infty(\Omega) := \left\{ u \in C^\infty(\Omega) \ \middle| \ \mathrm{supp}(u) \text{ is compact and } \mathrm{supp}(u) \subset \Omega \right\},
\]
 \[
L^\infty(\Omega) := \left\{ u : \Omega \to \mathbb{R} \ \middle| \ \exists M > 0 \text{ such that } |u(x)| \leq M \text{ for almost every } x \in \Omega \right\}.
\]
with norm 
\[
\|u\|_{L^\infty(\Omega)} = \operatorname{ess\,sup}_{x \in \Omega} |u(x)|,
\]
and 
\[
W^{1,\infty}(\mathbb{R}^n) = \left\{ u \in L^\infty(\mathbb{R}^n) \;:\; D u \in L^\infty(\mathbb{R}^n; \mathbb{R}^n) \text{ in the weak sense} \right\}
\]
with norm
\[
\|u\|_{W^{1,\infty}(\mathbb{R}^n)} = \|u\|_{L^\infty(\mathbb{R}^n)} + \|D u\|_{L^\infty(\mathbb{R}^n)}.
\]
    \item For any function \( u : \Omega \rightarrow \mathbb{R} \), we define the positive part and the negative part of \( u \) as:
\[
u^+ = \max\{u, 0\}, \quad u^- = \max\{-u, 0\}.
\]

\end{itemize}
\subsection{Musielak-Orlicz-Sobolev space \texorpdfstring{$W^{1,G(x,t)}$}{W1G(x,t)}}\label{sec2}
In this subsection, we recall some definitions and fundamental properties of Musielak-Orlicz and Musielak-Orlicz-Sobolev spaces. 
For a comprehensive bibliography on Musielak-Orlicz-Sobolev spaces, we refer the reader to \cite{Ala2, TD, Chlebicka2021, Diening2011, Harjulehto2019, Fan2012a, Fan2012b, Musielak1983, TS1,  TS4, TS6, TS7, TS10}.

\begin{definition}
Let $\Omega$ be an open subset of $\mathbb{R}^n$. A function $G:\Omega \times \mathbb{R} \to \mathbb{R}$ is called a generalized \textnormal{N}-function if it satisfies the following conditions:
\begin{enumerate}
    \item[(1)] For a.e. $x\in \Omega$, the function $G(x,t)$ is even, continuous, strictly increasing, and convex in $t$, and for each $t\in \mathbb{R}$, $G(x,t)$ is measurable in $x$;
    \item[(2)] $\displaystyle\lim_{t \to 0} \frac{G(x,t)}{t} = 0$, for a.e. $x\in \Omega$;
    \item[(3)] $\displaystyle\lim_{t \to \infty} \frac{G(x,t)}{t} = \infty$, for a.e. $x\in \Omega$;
    \item[(4)] $G(x,t) > 0$ for all $t > 0$ and all $x\in \Omega$, and $G(x,0) = 0$ for all $x\in \Omega$.
\end{enumerate}
\end{definition}

\begin{definition}\label{definv}
Let $G:\Omega\times[0,\infty)\to[0,\infty)$ be a generalized $\mathrm{N}$-function.
Since for a.e.\ $x\in\Omega$ the mapping $t\mapsto G(x,t)$ is continuous and
strictly increasing on $[0,\infty)$, we can define $G^{-1}(x,\cdot)$ as the
continuous inverse of $G(x,\cdot)$. In particular, it holds that
\[
G^{-1}(x,G(x,t)) = G(x,G^{-1}(x,t)) = t,
\quad \text{for a.e.\ } x\in\Omega \text{ and all } t\ge 0.
\]
\end{definition}
\begin{definition}
A generalized \textnormal{N}-function $G$ satisfies the \emph{$\Delta_2$-condition} if there exist $C_0 > 0$ and a nonnegative function $\varphi \in L^1(\Omega)$ such that
\[
G(x,2t) \leq C_0 G(x,t) + \varphi(x), \quad \text{for a.e. } x\in \Omega \text{ and all } t \geq 0.
\]
\end{definition}
\begin{definition}
A generalized \textnormal{N}-function $G(x,t)$ is said to satisfy:
\begin{enumerate}
    \item[\textnormal{(A0)}] if there exists $\mu \in (0,1]$ such that
    \[
    \mu \leq G^{-1}(x,1) \leq \frac{1}{\mu}, \quad \text{for a.e. } x \in \mathbb{R}^n;
    \]
    
    \item[\textnormal{(A1)}] if there exists $\mu \in (0,1]$ such that
    \[
    \mu G^{-1}(x,t) \leq G^{-1}(y,t),
    \]
    for every $t \in [1, 1/|B|]$, for a.e. $x,y \in B$, and every ball $B \subset \mathbb{R}^n$ with $|B| \leq 1$;
    
    \item[\textnormal{(A2)}] if for every $s > 0$ there exist $\mu \in (0,1]$ and $\varphi \in L^1(\mathbb{R}^n) \cap L^\infty(\mathbb{R}^n)$ such that
    \[
    \mu G^{-1}(x,t) \leq G^{-1}(y,t),
    \]
    for a.e. $x,y \in \mathbb{R}^n$ and for all $t \in [\varphi(x)+\varphi(y), s]$.
\end{enumerate}
\end{definition}

\begin{definition}\label{prec}
Let $G_1(x,t)$ and $G_2(x,t)$ be two generalized \textnormal{N}-functions.
\begin{itemize}
    \item[(1)] We say that $G_1(x,t)$ increases essentially slower than $G_2(x,t)$ near infinity, and we write $G_1 \prec\prec G_2$, if for any $k > 0$
    \[
    \lim_{t \to \infty} \frac{G_1(x, kt)}{G_2(x,t)} = 0, \quad \text{uniformly in } x \in \Omega.
    \]
    \item[(2)] We say that $G_1(x,t)$ is \emph{weaker} than $G_2(x,t)$, denoted by $G_1 \prec G_2$, if there exist constants $C_1, C_2 > 0$ and a nonnegative function $\varphi \in L^1(\Omega)$ such that
    \[
    G_1(x,t) \leq C_1 G_2(x, C_2 t) + \varphi(x), \quad \text{for a.e. } x \in \Omega \text{ and all } t \geq 0.
    \]
\end{itemize}
\end{definition}

\begin{definition}[Complementary function]\label{CF}
Let $G:\Omega\times[0,\infty)\to[0,\infty)$ be a generalized $\mathrm{N}$-function.
The function $\widetilde{G}:\Omega\times[0,\infty)\to[0,\infty)$ defined by
\begin{equation}\label{Cf}
\widetilde{G}(x,t)
:= \sup_{s \ge 0} \bigl\{ ts - G(x,s) \bigr\},
\quad \text{for a.e.\ } x\in\Omega \text{ and all } t\ge 0,
\end{equation}
is called the \emph{complementary function} (or \emph{conjugate function}) of $G$.
Moreover, $\widetilde{G}$ is also a generalized $\mathrm{N}$-function.
\end{definition}

\begin{remark}
From the definition of the complementary function $\widetilde{G}(x,t)$, we derive the following Young-type inequality:
\begin{equation}\label{Yi}
st \leq G(x,s) + \widetilde{G}(x,t), \quad \text{for all } x \in \Omega, \ s,t \geq 0.
\end{equation}
\end{remark}

The next lemma is taken from Bahrouni--Bahrouni--Missaoui \cite[Lemma~2.3]{Ala1}.

\begin{lemma}\label{lm1}
Let $G(x,t)$ be a generalized \textnormal{N}-function. Suppose that $t \mapsto g(x,t)$ is continuous and increasing on $\mathbb{R}$ for a.e. $x \in \Omega$. Moreover, assume that there exist constants $g^-, g^+ \in \mathbb{R}$ such that
\begin{align}
\widetilde{G}(x, g(x,s)) &\leq (g^+ - 1) G(x,s), \quad \text{for all } s \geq 0, \ x \in \Omega, \label{L1} \end{align}
and
\begin{align}
\frac{g^+}{g^+ - 1} =: \widetilde{g^-} &\leq \frac{\widetilde{g}(x,s)s}{\widetilde{G}(x,s)} \leq \widetilde{g^+} := \frac{g^-}{g^- - 1}, \quad \text{for all } x \in \Omega, \ s > 0, \label{D3}
\end{align}
where $\displaystyle \widetilde{G}(x,s) = \int_0^s \widetilde{g}(x,t)\, dt$.
\end{lemma}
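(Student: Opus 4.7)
My strategy is to exploit the Legendre duality between $G$ and $\widetilde G$. Two classical ingredients will suffice: (a) the equality case of Young's inequality, namely $\widetilde G(x,g(x,s)) = sg(x,s) - G(x,s)$; and (b) the Legendre identity $\widetilde g(x,\cdot) = g(x,\cdot)^{-1}$, so that $s = g(x,r)$ whenever $r = \widetilde g(x,s)$. Once these are in hand, both conclusions reduce to a direct algebraic manipulation of the ratio $tg(x,t)/G(x,t)$, exploiting the bounds $g^-$, $g^+$ furnished by hypothesis ($\mathcal{G}_0$).

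For \eqref{L1}, fix $s\ge 0$. The function $\sigma \mapsto g(x,s)\sigma - G(x,\sigma)$ is concave (since $G(x,\cdot)$ is convex), and its derivative $g(x,s)-g(x,\sigma)$ vanishes at $\sigma=s$, which is therefore the unique maximizer. Consequently
\[
\widetilde G(x,g(x,s)) = s\,g(x,s) - G(x,s).
\]
Invoking the upper half of the hypothesis $g^- \le sg(x,s)/G(x,s) \le g^+$, we obtain $sg(x,s) \le g^+ G(x,s)$, whence $\widetilde G(x,g(x,s)) \le (g^+-1)G(x,s)$, which is exactly \eqref{L1}.

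For \eqref{D3}, fix $s>0$ and set $r := \widetilde g(x,s)$, so that $s = g(x,r)$ by the Legendre identity. Repeating the computation above gives $\widetilde G(x,s) = r\,g(x,r) - G(x,r)$, and therefore
\[
\frac{s\,\widetilde g(x,s)}{\widetilde G(x,s)}
= \frac{r\,g(x,r)}{r\,g(x,r) - G(x,r)}
= \frac{q(x,r)}{q(x,r)-1},
\qquad
q(x,r) := \frac{r\,g(x,r)}{G(x,r)}.
\]
By assumption $q(x,r)\in[g^-,g^+]\subset(1,\infty)$, and the map $q\mapsto q/(q-1)$ is strictly decreasing on $(1,\infty)$; substituting the two endpoints yields
\[
\frac{g^+}{g^+-1} \;\le\; \frac{s\,\widetilde g(x,s)}{\widetilde G(x,s)} \;\le\; \frac{g^-}{g^--1},
\]
which is \eqref{D3}.

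The only delicate point is justifying the Legendre identity $\widetilde g(x,\cdot)=g(x,\cdot)^{-1}$. Since $g(x,\cdot)$ is assumed continuous and strictly increasing with $g(x,0)=0$ and $g(x,\sigma)\to\infty$ as $\sigma\to\infty$ (inherited from the N-function property of $G$), a standard convex-analysis argument shows that for each $t\ge 0$ the supremum defining $\widetilde G(x,t)$ is attained at the unique critical point $\sigma^*(t) = g(x,\cdot)^{-1}(t)$. Differentiating $\widetilde G(x,t) = t\sigma^*(t)-G(x,\sigma^*(t))$ in $t$ and using $g(x,\sigma^*(t))=t$ then gives $\partial_t \widetilde G(x,t)=\sigma^*(t)$, i.e.\ $\widetilde g(x,t) = g(x,\cdot)^{-1}(t)$. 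With this identity established, the remainder of the argument is purely algebraic and the proof is complete.
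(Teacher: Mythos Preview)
Your argument is correct. The paper itself does not prove this lemma; it simply cites \cite[Lemma~2.3]{Ala1} (Bahrouni--Bahrouni--Missaoui), so there is no in-paper proof to compare against. What you have written is the standard Legendre-duality computation underlying such Simonenko-index relations, and it is almost certainly what the cited reference contains: the Young equality $\widetilde G(x,g(x,s))=s\,g(x,s)-G(x,s)$ combined with the upper bound in \eqref{D22} gives \eqref{L1}, and the inverse relation $\widetilde g(x,\cdot)=g(x,\cdot)^{-1}$ together with the decreasing map $q\mapsto q/(q-1)$ on $(1,\infty)$ gives \eqref{D3}. One minor point of phrasing: the lemma hypothesis says $g(x,\cdot)$ is ``increasing'' rather than ``strictly increasing'', so the inverse should in principle be read as the generalized (right-continuous) inverse; your computation goes through unchanged in that setting, and in any case \eqref{D22} forces strict monotonicity of $g(x,\cdot)$, so the distinction is immaterial here.
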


\begin{remark}\label{compl}
The condition \eqref{D22} implies that $G(x,t)$ and its complementary function $\widetilde{G}(x,t)$ satisfy the $\Delta_2$-condition.
\end{remark}
 Now, we  define the Musielak-Orlicz space as follows:
$$L^{G(x,t)}(\Omega):=\left\lbrace u:\Omega\longrightarrow \mathbb{R}\ \text{measurable :}\ \rho_{G}(\lambda u)<+\infty,\ \ \text{for some}\ \lambda>0\right\rbrace,$$
where
\begin{equation}\label{Mo}
  \rho_{G}(u):= \int_{\Omega}G(x, u){\rm d}x.
\end{equation}
The space $L^{G(x,t)}(\Omega)$ is endowed with the Luxemburg norm
\begin{equation}\label{No}
  \Vert u\Vert_{L^{G(x,t)}(\Omega)}:=\inf\left\lbrace \lambda>0:\ \rho_{G}\left(x,\frac{u}{\lambda}\right)\leq 1\right\rbrace.
\end{equation}
\begin{proposition}\label{HM}
    Let $G$ be a generalized \textnormal{N}-function satisfies the $\Delta_2$-condition, then
    $$L^{G(x,t)}(\Omega) =\left\lbrace  u: \Omega \longrightarrow \mathbb{R}\ \text{measurable :}\ \rho_{G}( u)<+\infty\right\rbrace.$$
\end{proposition}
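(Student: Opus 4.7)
The plan is to establish the two inclusions separately. The nontrivial direction $L^{G(x,t)}(\Omega) \subseteq \{u \text{ measurable} : \rho_G(u) < \infty\}$ is where the $\Delta_2$-condition is used in an essential way; the reverse inclusion is immediate by setting $\lambda = 1$ in the Luxemburg definition \eqref{No}.

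First I would handle the trivial direction: if $u$ is measurable and $\rho_G(u) < \infty$, then taking $\lambda = 1$ in the defining family $\{\lambda > 0 : \rho_G(\lambda u) < \infty\}$ immediately places $u$ in $L^{G(x,t)}(\Omega)$. For the converse direction, suppose $u \in L^{G(x,t)}(\Omega)$, so there is some $\lambda_0 > 0$ with $\rho_G(\lambda_0 u) < \infty$. If $\lambda_0 \geq 1$, then $|u(x)| \leq \lambda_0 |u(x)|$ pointwise, and since $t \mapsto G(x,t)$ is nondecreasing on $[0,\infty)$ (by convexity together with $G(x,0)=0$ and evenness), we immediately get $\rho_G(u) \leq \rho_G(\lambda_0 u) < \infty$, and we are done.

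The essential case is $0 < \lambda_0 < 1$, and here I would iterate the $\Delta_2$-condition. Choose an integer $k \geq 1$ such that $2^k \lambda_0 \geq 1$. A straightforward induction on $\Delta_2$, namely $G(x,2t) \leq C_0 G(x,t) + \varphi(x)$, yields
\[
G(x, 2^k t) \;\leq\; C_0^k \, G(x,t) \;+\; \Bigl(\sum_{j=0}^{k-1} C_0^j\Bigr)\varphi(x),
\quad \text{for a.e.\ } x \in \Omega \text{ and all } t \geq 0.
\]
Applying this with $t = \lambda_0 |u(x)|$ and using monotonicity once more (since $|u(x)| \leq 2^k \lambda_0 |u(x)|$) gives
\[
G(x, u(x)) \;\leq\; G(x, 2^k \lambda_0 u(x)) \;\leq\; C_0^k \, G(x, \lambda_0 u(x)) \;+\; C_k \varphi(x),
\]
with $C_k := \sum_{j=0}^{k-1} C_0^j$. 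Integrating over $\Omega$ and using $\varphi \in L^1(\Omega)$ yields
\[
\rho_G(u) \;\leq\; C_0^k \, \rho_G(\lambda_0 u) + C_k \|\varphi\|_{L^1(\Omega)} < +\infty,
\]
which proves the remaining inclusion.

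The main obstacle is minor: it amounts to the bookkeeping of the iterated $\Delta_2$-constant and the resulting $L^1$-perturbation term $\varphi$, which must be tracked carefully because the $\Delta_2$-condition in the sense of the paper's definition is not a pure scaling inequality but allows an $L^1$ error. Once the iteration is carried out for a $k$ depending only on $\lambda_0$, the conclusion follows from the measurability of $u$, the nonnegativity of $G$, and the integrability of $\varphi$. No other ingredients are needed.
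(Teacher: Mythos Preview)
Your proof is correct and is exactly the standard argument for this fact. The paper states Proposition~\ref{HM} without proof, so there is nothing to compare against; your handling of the iterated $\Delta_2$-inequality with the $L^1$ perturbation term $\varphi$ is the right way to fill in the details.
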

\begin{proposition} \label{zoo}
     Let $G$ be a generalized \textnormal{N}-function satisfies \eqref{D22}, then the following assertions hold:
     \begin{itemize}
    \item [(1)] $\min \{s^{g^-}, s^{g^+}\}G(x,t)\leq  G(x,s t)\leq \max \{ s^{g^-}, s^{g^+}\}G(x,t),\text{ for a.e. } x \in \Omega$ $\text{ and all }  s, \ t \geq 0. $
     \item [(2)] $\min \{s^{\widetilde{g^-}}, s^{\widetilde{g^+}}\}\widetilde{G}(x,t)\leq  \widetilde{G}(x,s t)\leq \max \{ s^{\widetilde{g^-}}, s^{\widetilde{g^+}}\}\widetilde{G}(x,t),\text{ for a.e. } x \in \Omega$ $\text{ and all }  s, \ t \geq 0. $
 \item [(3)]  $\min \left\{\|u\|_{L^{G(x,t)}(\Omega)}^{g^-},\|u\|_{L^{G(x,t)}(\Omega)}^{g^+}\right\} \leq \rho_{G}(u) \leq\max \left\{\|u\|_{L^{G(x,t)}(\Omega)}^{g^-},\|u\|_{L^{G(x,t)}(\Omega)}^{g^+}\right\},$ \ for all $  u\in L^{G(x,t)}(\Omega)$.
\end{itemize}
\end{proposition}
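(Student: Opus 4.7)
The plan is to derive all three assertions as corollaries of the pointwise two-sided logarithmic bound encoded in \eqref{D22}. Since $g(x,\cdot) = \partial_t G(x,\cdot)$, condition \eqref{D22} rewrites as
\[
\frac{g^-}{t} \;\le\; \frac{d}{dt}\ln G(x,t) \;\le\; \frac{g^+}{t}, \qquad t>0,
\]
for a.e.\ $x\in\Omega$. Integrating this differential inequality from $t$ to $st$ when $s\ge 1$ gives $s^{g^-}G(x,t)\le G(x,st)\le s^{g^+}G(x,t)$; when $0<s<1$, integrating from $st$ to $t$ reverses both inequalities, which after rearrangement yields the bound with $s^{g^+}$ on the left and $s^{g^-}$ on the right. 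In both regimes the two ends are captured by $\min\{s^{g^-},s^{g^+}\}$ and $\max\{s^{g^-},s^{g^+}\}$, and the case $s=0$ follows by continuity using $G(x,0)=0$. This establishes (1).

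For (2), I would invoke Lemma \ref{lm1}, which furnishes exponents $\widetilde{g^-},\widetilde{g^+}$ with
\[
\widetilde{g^-}\;\le\;\frac{\widetilde{g}(x,s)\,s}{\widetilde{G}(x,s)}\;\le\;\widetilde{g^+}, \qquad s>0,
\]
i.e.\ the complementary function $\widetilde{G}$ satisfies a structural bound of exactly the same shape as \eqref{D22}. The argument of Step~1 then applies verbatim with $G$ replaced by $\widetilde{G}$ and $(g^-,g^+)$ replaced by $(\widetilde{g^-},\widetilde{g^+})$, yielding the desired two-sided scaling inequality for $\widetilde{G}$.

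For (3), I would first dispose of $u\equiv 0$ trivially, and otherwise set $\lambda:=\|u\|_{L^{G(x,t)}(\Omega)}>0$. By Remark \ref{compl}, $G$ satisfies the $\Delta_2$-condition, so the modular $\rho_G$ is finite on $L^{G(x,t)}(\Omega)$ and continuous along rays; standard Musielak--Orlicz theory then gives the normalization identity $\rho_G(u/\lambda)=1$. Applying (1) pointwise with scaling factor $\lambda$ to the function $u(x)/\lambda$,
\[
\min\{\lambda^{g^-},\lambda^{g^+}\}\, G\!\bigl(x,\tfrac{u(x)}{\lambda}\bigr)\;\le\;G(x,u(x))\;\le\;\max\{\lambda^{g^-},\lambda^{g^+}\}\, G\!\bigl(x,\tfrac{u(x)}{\lambda}\bigr),
\]
and integrating over $\Omega$ with $\rho_G(u/\lambda)=1$ produces exactly the sandwich claimed in (3).

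The main technical point is the identity $\rho_G(u/\|u\|)=1$ used in Step~3: without the $\Delta_2$-condition the infimum in the Luxemburg norm need not be attained, so this step relies essentially on Remark \ref{compl} together with the monotonicity of $t\mapsto G(x,t)$. Everything else reduces to elementary one-variable calculus on scalar inequalities, and no further structural hypothesis on $G$ beyond \eqref{D22} is required.
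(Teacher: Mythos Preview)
The paper states Proposition~\ref{zoo} without proof, treating it as a standard preliminary fact from Musielak--Orlicz theory; your argument is the correct and canonical one. The logarithmic-derivative integration for (1), the reduction of (2) to (1) via Lemma~\ref{lm1}, and the use of the $\Delta_2$-normalization $\rho_G(u/\|u\|)=1$ for (3) are exactly the standard route, and nothing is missing.
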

As a consequence of \eqref{Yi}, we have the following result:
\begin{lemma}[H\"older's type
inequality]\label{H1}
  Let $\Omega$ be an open subset of $\mathbb{R}^n$ and $G$ be a generalized \textnormal{N}-function satisfies \eqref{D22}, then
  \begin{equation}\label{Ho}
     \left\vert \int_{\Omega} uv {\rm d}x \right\vert \leq 2 \Vert u\Vert_{L^{G(x,t)}(\Omega)}\Vert v\Vert_{L^{\widetilde{G}(x,t)}(\Omega)},\ \text{for all}\ u\in L^{G(x,t)}(\Omega)\ \text{and all}\ v\in L^{\widetilde{G}(x,t)}(\Omega).
  \end{equation}
\end{lemma}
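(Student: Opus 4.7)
The strategy is the familiar normalize--apply Young--integrate argument, carried out pointwise in the Musielak--Orlicz setting. I may assume $\alpha:=\|u\|_{L^{G(x,t)}(\Omega)}>0$ and $\beta:=\|v\|_{L^{\widetilde{G}(x,t)}(\Omega)}>0$, since otherwise $u$ or $v$ vanishes almost everywhere and the inequality is trivial. The plan is to first establish the modular bounds $\rho_G(u/\alpha)\le 1$ and $\rho_{\widetilde{G}}(v/\beta)\le 1$, and then combine them via the Young-type inequality \eqref{Yi}.

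For the modular bounds, I invoke \eqref{D22} together with Remark \ref{compl}, which guarantees that both $G$ and $\widetilde{G}$ satisfy the $\Delta_2$-condition. This makes the modular $\rho_G$ continuous on $L^{G(x,t)}(\Omega)$ (and similarly for $\widetilde G$), so by definition of the Luxemburg infimum \eqref{No} one has $\rho_G(u/\alpha)\le 1$ and $\rho_{\widetilde G}(v/\beta)\le 1$. Alternatively, one can take a minimizing sequence $\lambda_k\downarrow\alpha$ with $\rho_G(u/\lambda_k)\le 1$ and pass to the limit by monotone convergence of $t\mapsto G(x,t/\lambda)$ in $\lambda$; this avoids direct appeal to continuity.

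Next, applying \eqref{Yi} pointwise with $s=|u(x)|/\alpha$ and $t=|v(x)|/\beta$ yields
\begin{equation*}
\frac{|u(x)v(x)|}{\alpha\beta}
\;\le\; G\!\left(x,\tfrac{|u(x)|}{\alpha}\right) + \widetilde{G}\!\left(x,\tfrac{|v(x)|}{\beta}\right)\qquad\text{for a.e.\ }x\in\Omega.
\end{equation*}
Integrating over $\Omega$ and invoking the two modular bounds gives
\begin{equation*}
\frac{1}{\alpha\beta}\int_{\Omega}|uv|\,\mathrm{d}x
\;\le\; \rho_G(u/\alpha)+\rho_{\widetilde G}(v/\beta)\;\le\; 2,
\end{equation*}
which rearranges to the claimed inequality \eqref{Ho}.

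\textbf{Main obstacle.} The only nontrivial point is rigorously justifying $\rho_G(u/\alpha)\le 1$ when $\alpha$ is only the infimum in the Luxemburg definition; this is where the $\Delta_2$-property inherited from \eqref{D22} (via Remark \ref{compl}) enters, and the symmetric statement for $\widetilde G$ requires the analogous $\Delta_2$-property of $\widetilde G$, which is exactly what Lemma \ref{lm1} and Remark \ref{compl} provide. Everything else is pointwise Young plus integration.
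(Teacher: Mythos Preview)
Your proof is correct and follows exactly the approach the paper has in mind: the paper does not give a detailed proof but simply presents the lemma as ``a consequence of \eqref{Yi}'', i.e., of the pointwise Young inequality, which is precisely the normalize--Young--integrate argument you carry out.
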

The subsequent proposition deals with some topological properties of
the Musielak-Orlicz space, see \cite[Theorem 7.7 and Theorem
8.5]{Musielak1983}.
\begin{proposition}\label{AB}
\begin{enumerate}
\item [(1)] Let $G(x,t)$ be a generalized \textnormal{N}-function and $\Omega$ an open subset of
 $\mathbb{R}^n$. Then,
 \begin{enumerate}
     \item[(a)] the space
    $\left(L^{G(x,t)}(\Omega),\|\cdot\|_{G}\right)$ is a Banach space;
     \item[(b)] if $G(x,t)$ satisfies \eqref{D22}, then
$L^{G(x,t)}(\Omega)$ is a separable and reflexive space.
 \end{enumerate}
\item [(2)] Let $G_1(x,t)$ and $G_2(x,t)$  be two generalized \textnormal{N}-functions such that $G_1(x,t) \prec G_2(x,t)$ and $\Omega$ be an open bounded subset of $\mathbb{R}^n$. Then,
\(
L^{G_2(x,t)}(\Omega) \hookrightarrow L^{G_1(x,t)}(\Omega).
\)
\end{enumerate}
\end{proposition}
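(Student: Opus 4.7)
The statement is classical in Musielak's theory; I indicate the structural steps one would reproduce.

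\textbf{Step 1 (Banach space, part (1)(a)).} First I would verify the three norm axioms for $\|\cdot\|_{L^{G(x,t)}(\Omega)}$: positive definiteness uses $G(x,t)>0$ for $t>0$ together with the strict monotonicity of $t\mapsto G(x,t)$; positive homogeneity is immediate from the defining infimum; the triangle inequality follows from the convexity of $G(x,\cdot)$, which passes to convexity of the modular $\rho_G$ and then to the Luxemburg norm by a standard scaling argument. For completeness I would take a Cauchy sequence $\{u_k\}$, pass to a subsequence with $\|u_{k_{j+1}}-u_{k_j}\|_{L^{G(x,t)}(\Omega)}\le 2^{-j}$, and consider $v_N:=\sum_{j=1}^{N}|u_{k_{j+1}}-u_{k_j}|$. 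Convexity of $\rho_G$ together with monotone convergence applied to $\rho_G(v_N)$ shows $v:=\lim v_N$ is a.e. finite and lies in $L^{G(x,t)}(\Omega)$; hence the series converges a.e. to a pointwise limit $u$, and a final Fatou estimate on $\rho_G((u-u_k)/\lambda)$ yields norm convergence.

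\textbf{Step 2 (Separability and reflexivity, part (1)(b)).} Under \eqref{D22}, Remark~\ref{compl} guarantees the $\Delta_2$-condition for both $G$ and $\widetilde G$, and Proposition~\ref{HM} identifies $L^{G(x,t)}(\Omega)$ with the modular space, so that norm and modular convergence coincide up to the equivalences in Proposition~\ref{zoo}(3). Separability then follows by a three-step truncation: approximate $u\in L^{G(x,t)}(\Omega)$ first by its truncations $u\mathbf{1}_{\{|u|\le k\}}\mathbf{1}_{\Omega\cap B_k(0)}$ (justified by modular dominated convergence via $\Delta_2$), then by simple functions on dyadic cubes, then by simple functions with rational values on rational cubes, yielding a countable dense set. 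For reflexivity I would use the Hölder inequality of Lemma~\ref{H1} together with the $\Delta_2$-condition on $\widetilde G$ to identify the topological dual $(L^{G(x,t)}(\Omega))^*\cong L^{\widetilde{G}(x,t)}(\Omega)$; applying the same duality to $\widetilde G$ (whose conjugate is $G$ and which satisfies $\Delta_2$ by Lemma~\ref{lm1} and Remark~\ref{compl}), the bidual is canonically identified with $L^{G(x,t)}(\Omega)$, giving reflexivity.

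\textbf{Step 3 (Continuous embedding, part (2)).} By the definition of $G_1\prec G_2$, there exist $C_1,C_2>0$ and $\varphi\in L^1(\Omega)$ such that $G_1(x,t)\le C_1 G_2(x,C_2 t)+\varphi(x)$. Given $u\in L^{G_2(x,t)}(\Omega)$ with $\|u\|_{L^{G_2(x,t)}(\Omega)}=K>0$, I would choose $\lambda\ge C_2 K$ large enough (depending on $C_1$, $C_2$, $\|\varphi\|_{L^1(\Omega)}$, $g^\pm$) and use Proposition~\ref{zoo}(1) to estimate
\[
\rho_{G_1}\!\left(\tfrac{u}{\lambda}\right)\le C_1\,\rho_{G_2}\!\left(\tfrac{C_2 u}{\lambda}\right)+\|\varphi\|_{L^1(\Omega)}\le 1,
\]
which by the definition of the Luxemburg norm forces $\|u\|_{L^{G_1(x,t)}(\Omega)}\le\lambda\le C\max\{K,1\}$, with $C$ independent of $u$. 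This is the desired continuous embedding.

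\textbf{Main obstacle.} The genuine difficulty lies in reflexivity: without additional regularity (e.g.\ uniform convexity of $G$) one cannot invoke Clarkson-type inequalities directly, and the efficient route is through the dual representation $(L^{G(x,t)}(\Omega))^*\cong L^{\widetilde{G}(x,t)}(\Omega)$. The delicate ingredient is the Radon--Nikodym style argument showing that every continuous linear functional is represented by integration against some $v\in L^{\widetilde{G}(x,t)}(\Omega)$; this representation fails without $\Delta_2$ on $\widetilde G$, so the bound $g^->1$ in \eqref{D22} (which by Lemma~\ref{lm1} forces the $\Delta_2$ condition on $\widetilde G$) is essential at precisely this point. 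Once the duality is in place, reflexivity follows mechanically by iterating the identification.
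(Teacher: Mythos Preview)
Your sketch is correct and reproduces the classical Musielak argument; the paper itself gives no proof at all, merely citing \cite[Theorem~7.7 and Theorem~8.5]{Musielak1983}, so there is nothing substantive to compare against. One minor remark on Step~3: you invoke Proposition~\ref{zoo}(1), which needs \eqref{D22}, but the embedding in part~(2) is stated without that hypothesis; in fact plain convexity of $G_1(x,\cdot)$ together with $G_1(x,0)=0$ already gives $\rho_{G_1}(u/(M\lambda))\le M^{-1}\rho_{G_1}(u/\lambda)$ for $M\ge 1$, which is enough to turn your modular bound into a linear norm bound $\|u\|_{L^{G_1(x,t)}(\Omega)}\le C\|u\|_{L^{G_2(x,t)}(\Omega)}$ without the $\max\{K,1\}$ artifact.
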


Now, we are ready to define the  Musielak-Orlicz Sobolev space. Let
$G(x,t)$ be a generalized \textnormal{N}-function and $\Omega$ be an open subset of
$\mathbb{R}^n$. The  Musielak-Sobolev space is defined as follows
$$W^{1,G(x,t)}(\Omega):=\left\lbrace u\in L^{G(x,t)}(\Omega):\ |D u| \in L^{G(x,t)}(\Omega)\right\rbrace.$$
The space $W^{1,G(x,t)}(\Omega)$ is endowed with the norm
\begin{equation}\label{NM}
  \Vert u\Vert:=\Vert u\Vert_{L^{G(x,t)}(\Omega)}+\Vert D u\Vert_{L^{G(x,t)}(\Omega)},\ \ \text{for all}\ u\in W^{1,G(x,t)}(\Omega),
\end{equation}
where $\|D u\|_{L^{G(x,t)}(\Omega)} := \| |D u| \|_{L^{G(x,t)}(\Omega)}$.\\ We denote by
$W^{1,G(x,t)}_0(\Omega)$
 the completion of $C^\infty _0(\Omega)$ in $W^{1,G(x,t)}(\Omega)$.


\begin{remark}\label{Ref}
 If $G$  satisfies \eqref{D22}, then the space
$W^{1,G(x,t)}(\Omega)$ is a reflexive and separable Banach space with
respect to the norm $\Vert \cdot\Vert$.
\end{remark}

Now we introduce the Sobolev conjugate from \cite{Ala3}, which refines the one proposed in \cite{Cianchi2024} and yields several important results presented below.

\begin{definition}
The Sobolev conjugate of $G(x,t)$ is defined as the generalized \textnormal{N}-function $G^\ast(x,t)$ given by
			\begin{equation*}
  G ^ \ast (x,t)=G (x, N^{-1} (x,t)) , \text{ for a.e. } x \in \mathbb{R}^n \text{ and all } t \geq 0,
\end{equation*}
where
\begin{equation*}
  N(x,t)= \left( \int_{0}^{t} \left( \frac{\tau}{\varphi (x, \tau)} \right)^{\frac{1}{n-1}} d \tau \right)^{\frac{n-1}{n}}, \text{ for } x \in \mathbb{R}^n \text{ and } t\geq 0.
\end{equation*}
 \end{definition}
 \begin{proposition}[\cite{Ala3}] \label{zoo*}
     Let $G(x,t)$ be a generalized \textnormal{N}-function that satisfies \eqref{D22}--\eqref{GG2} and $G^*(x,t)$ its Sobolev conjugate function, then the following assertions hold:
     \begin{itemize}
    \item [(1)] $\min \left\{s^{g^-_*}, s^{g^+_*}\right\}G^*(x,t)\leq  G^*(x,s t)\leq \max \left\{ s^{g^-_*}, s^{g^+_*}\right\}G^*(x,t),\text{ for a.e. } x \in \Omega$ $\text{ and all }  s, \ t \geq 0$;
 \item [(2)] $\displaystyle{
        1<g^-_*\leq \frac{g^*(x,t)t}{G^*(x,t)}\leq g^+_*,\ \ \text{for all}\ x\in \Omega\ \text{and all}\ t> 0}$;
    \item[(3)] $\min \left\{\|u\|_{L^{G^*(x,t)}(\Omega)}^{g^-_*},\|u\|_{L^{G^*(x,t)}(\Omega)}^{g^+_*}\right\} \leq \rho_{G^*}(u) \leq\max \left\{\|u\|_{L^{G^*(x,t)}(\Omega)}^{g^-_*},\|u\|_{L^{G^*(x,t)}(\Omega)}^{g^+_*}\right\},$ \ for all $  u\in L^{G^*(x,t)}(\Omega)$;
\end{itemize}
where $\displaystyle{g_*^- := \frac{ng^-}{n - g^-}}$, $\displaystyle{g_*^+ := \frac{ng^+}{n - g^+}}$ and $\displaystyle{G^*(x,t)=\int_0^t g^*(x,s){\rm d}s}$.
\end{proposition}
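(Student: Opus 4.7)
The plan is to establish the three claims in the order (2), (1), (3), since claim (1) follows from (2) by integrating a logarithmic derivative, and claim (3) is the standard modular--Luxemburg norm equivalence once (1) is known.

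For (2), I would work with $\sigma := N^{-1}(x,t)$, so that $t = N(x,\sigma)$ and $G^*(x,t) = G(x,\sigma)$. Writing $M(x,\sigma) := \int_0^\sigma (\tau/\varphi(x,\tau))^{1/(n-1)}\,\mathrm{d}\tau$ so that $N = M^{(n-1)/n}$, the chain rule gives
\[
g^*(x,t) = \frac{g(x,\sigma)}{\partial_\sigma N(x,\sigma)} = \frac{n}{n-1}\, g(x,\sigma)\, t^{1/(n-1)}\, \bigl(\varphi(x,\sigma)/\sigma\bigr)^{1/(n-1)}.
\]
Combined with $t^{n/(n-1)} = M(x,\sigma)$ this gives
\[
\frac{g^*(x,t)\, t}{G^*(x,t)} = \frac{n}{n-1}\cdot\frac{g(x,\sigma)(\varphi(x,\sigma)/\sigma)^{1/(n-1)}}{G(x,\sigma)}\cdot M(x,\sigma).
\]
To estimate $M(x,\sigma)$, I would use the equivalence $\varphi(x,\tau)\asymp G(x,\tau)/\tau$ coming from \eqref{D22}, together with the monotonicity properties $\tau \mapsto G(x,\tau)/\tau^{g^+}$ non-increasing and $\tau \mapsto G(x,\tau)/\tau^{g^-}$ non-decreasing, both consequences of Proposition \ref{zoo}(1). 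These yield
\[
\frac{(n-1)\sigma^{n/(n-1)}}{(n-g^-)\,[G(x,\sigma)]^{1/(n-1)}} \lesssim M(x,\sigma) \lesssim \frac{(n-1)\sigma^{n/(n-1)}}{(n-g^+)\,[G(x,\sigma)]^{1/(n-1)}},
\]
where convergence at $\tau = 0$ is ensured by $g^\pm < n$. Inserting these bounds and using $r(x,\sigma) = \sigma g(x,\sigma)/G(x,\sigma) \in [g^-,g^+]$ from \eqref{D22} makes the factor $(\sigma g/G)^{n/(n-1)}$ appear, which cancels against the $G^{-n/(n-1)}$ coming from $M$ and leaves exactly $ng^\pm/(n-g^\pm)$.

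For (1), noting that $\frac{\mathrm{d}}{\mathrm{d}t}\log G^*(x,t) = g^*(x,t)/G^*(x,t)$, integration of (2) from $t$ to $st$ with $s\geq 1$ gives $g^-_*\log s \leq \log(G^*(x,st)/G^*(x,t)) \leq g^+_*\log s$, and exponentiating yields the desired inequality; the case $s<1$ is identical with the exponents' roles reversed. For (3), this follows from (1) by the standard Orlicz-space argument: setting $\lambda := \|u\|_{L^{G^*(x,t)}(\Omega)}$ and applying (1) pointwise, $\rho_{G^*}(u) = \int_\Omega G^*(x,\lambda\cdot u/\lambda)\,\mathrm{d}x$ is bracketed between $\min(\lambda^{g^-_*},\lambda^{g^+_*})\,\rho_{G^*}(u/\lambda)$ and $\max(\lambda^{g^-_*},\lambda^{g^+_*})\,\rho_{G^*}(u/\lambda)$, and $\rho_{G^*}(u/\lambda)=1$ by the definition of the Luxemburg norm (together with the continuity of $\rho_{G^*}$, which is itself guaranteed by the $\Delta_2$-type condition obtained in step (1) for $G^*$).

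The main obstacle will be the bookkeeping in step (2): one must carefully integrate $(\tau/\varphi(x,\tau))^{1/(n-1)}$ over $(0,\sigma)$ using the monotonicity of $G/\tau^{g^\pm}$ and the equivalence $\varphi \sim G/\tau$, and then verify that the algebraic simplifications telescope precisely into $ng^\pm/(n-g^\pm)$. The role of the bounds $g^\pm < n$ is critical, as they ensure both the integrability of the kernel at zero and the non-degeneracy of the denominator $n - g^\pm$.
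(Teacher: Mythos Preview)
The paper does not prove this proposition; it is simply quoted from \cite{Ala3}. Your plan---establish (2) by the substitution $\sigma=N^{-1}(x,t)$, then obtain (1) by integrating the logarithmic derivative, then (3) by the standard modular--Luxemburg argument---is the natural direct route and is sound.

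There is one precision issue worth flagging. You phrase the estimates for $M(x,\sigma)$ with $\lesssim$ and appeal to an ``equivalence $\varphi\asymp G/\tau$''. If multiplicative constants were genuinely lost at that step, claim (2) would only hold with extra factors in front of $g^\pm_*$, which is weaker than the sharp two-sided inequality asserted. In the standard definition of the Sobolev conjugate the function in the integrand is $\varphi=G$ exactly (the symbol $\varphi$ is nowhere defined in the present paper and is presumably a misprint for $G$), and with that your bounds become honest inequalities: for $0<\tau\le\sigma$, Proposition~\ref{zoo}(1) gives $(\tau/\sigma)^{g^+}G(x,\sigma)\le G(x,\tau)\le(\tau/\sigma)^{g^-}G(x,\sigma)$, so integrating $(\tau/G(x,\tau))^{1/(n-1)}$ over $(0,\sigma)$ yields precisely
\[
\frac{(n-1)\,\sigma^{n/(n-1)}}{(n-g^-)\,G(x,\sigma)^{1/(n-1)}}
\;\le\; M(x,\sigma) \;\le\;
\frac{(n-1)\,\sigma^{n/(n-1)}}{(n-g^+)\,G(x,\sigma)^{1/(n-1)}}.
\]
Substituting this into your identity $\dfrac{g^*(x,t)\,t}{G^*(x,t)}=\dfrac{n}{n-1}\cdot\dfrac{g(x,\sigma)}{G(x,\sigma)}\cdot\bigl(G(x,\sigma)/\sigma\bigr)^{1/(n-1)}\cdot M(x,\sigma)$ collapses the right-hand side to a quantity in the interval $\bigl[\tfrac{n}{\,n-g^-\,}\, r(x,\sigma),\ \tfrac{n}{\,n-g^+\,}\, r(x,\sigma)\bigr]$ with $r(x,\sigma)=\sigma g(x,\sigma)/G(x,\sigma)\in[g^-,g^+]$, and the sharp endpoints $g^-_*=\tfrac{ng^-}{n-g^-}$ and $g^+_*=\tfrac{ng^+}{n-g^+}$ drop out. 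Steps (1) and (3) then follow exactly as you describe.
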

\begin{proposition}[\cite{Cianchi2024,Ala3}]\label{embb}
Let $\Omega$ be a bounded domain in $\mathbb{R}^n$ and $G(x,t)$ be a generalized \textnormal{N}-function that satisfies \eqref{GG1}--\eqref{GG2}. Let $\vartheta(x,t)$ be a generalized \textnormal{N}-function such that
\begin{equation}
\vartheta(x,t)\prec \prec G^*(x,t),\text{  and   } \int_{\Omega} \vartheta(x,t) \, \mathrm{d}x < \infty,\ \text{for } t>0.
\end{equation}
\begin{enumerate}
\item[(1)] The embedding
\(
W^{1,G(x,t)}_0(\Omega) \hookrightarrow L^{\vartheta(x,t)}(\Omega) 
\) is compact.

\item[(2)] Assume, in addition, that $\Omega$ is a bounded domain with a Lipschitz boundary $\partial \Omega$. Then, the embedding
\(
W^{1,G(x,t)}(\Omega) \hookrightarrow L^{\vartheta(x,t)}(\Omega) 
\)
is compact.
\end{enumerate}
\end{proposition}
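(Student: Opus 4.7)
The plan is to reduce each compact embedding to the corresponding continuous Sobolev embedding $W^{1,G(x,t)}(\Omega) \hookrightarrow L^{G^{*}(x,t)}(\Omega)$ (which is the Cianchi-type result underlying the definition of $G^{*}$), and then upgrade weak subsequential convergence to strong convergence in $L^{\vartheta(x,t)}(\Omega)$ via a modular version of the Vitali convergence theorem, exploiting the strict ordering $\vartheta \prec\prec G^{*}$.

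For part~(1), I would take a bounded sequence $\{u_k\} \subset W^{1,G(x,t)}_0(\Omega)$. By the continuous Sobolev embedding into $L^{G^{*}(x,t)}(\Omega)$, $\{u_k\}$ is bounded there as well, and by Proposition~\ref{zoo*}(3) this gives a uniform bound on $\rho_{G^{*}}(u_k)$. Since $g^->1$ forces the classical embedding $W^{1,1}_0(\Omega) \hookrightarrow L^1(\Omega)$, the standard Rellich--Kondrachov theorem yields a subsequence (not relabeled) with $u_k \to u$ a.e.\ and in $L^1(\Omega)$. It then remains to show that $\rho_\vartheta(u_k-u) \to 0$, which by Proposition~\ref{zoo}(3) (applied to $\vartheta$, using that $\vartheta$ inherits a $\Delta_2$-type control from $\vartheta \prec G^{*}$ together with the $\Delta_2$-condition of $G^{*}$) is equivalent to strong convergence in $L^{\vartheta(x,t)}(\Omega)$.

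The key step is the modular Vitali argument. By $\vartheta \prec\prec G^{*}$, for every $\varepsilon>0$ there exists $T=T(\varepsilon)>0$ such that $\vartheta(x,t) \le \varepsilon\, G^{*}(x,t)$ whenever $t\ge T$, uniformly in $x\in\Omega$. Splitting $\Omega$ into the sets $\{|u_k-u|\ge T\}$ and $\{|u_k-u|<T\}$, the first contribution is bounded by $\varepsilon \sup_k \rho_{G^{*}}(u_k-u) \le C\varepsilon$, while the second is controlled by $\int_\Omega \vartheta(x,T)\,\mathrm{d}x < \infty$ together with equi-absolute continuity of the integral on sets of vanishing measure (which follows from $u_k\to u$ in $L^1$ and a.e.). Letting $k\to\infty$ and then $\varepsilon\to 0$ gives $\rho_\vartheta(u_k-u)\to 0$, which yields the compactness.

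For part~(2), the Lipschitz regularity of $\partial\Omega$ lets one construct an extension operator $E: W^{1,G(x,t)}(\Omega)\to W^{1,G(x,t)}_0(\widetilde\Omega)$ bounded for some bounded $\widetilde\Omega\Supset\Omega$ (after extending $G$ to $\widetilde\Omega$ while preserving \eqref{GG1}--\eqref{GG2}); applying part~(1) to $\{Eu_k\}$ and restricting produces the desired compactness. The main obstacle I anticipate is ensuring that the uniform control $\vartheta(x,t)\le\varepsilon G^{*}(x,t)$ for $t\ge T$ is truly uniform in $x$, since in the Musielak--Orlicz framework both sides depend on the spatial point; here the structural conditions \eqref{GG1}--\eqref{GG2} enter decisively through Proposition~\ref{zoo*}, which normalizes $G^{*}$ enough to make the quantitative definition of $\prec\prec$ meaningful and independent of $x$. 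Once this uniform tail control is secured, the remainder of the argument parallels the classical compactness proof.
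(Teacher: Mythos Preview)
The paper does not prove this proposition at all: it is quoted from \cite{Cianchi2024,Ala3} and used as a black box (the subsequent remark merely checks that \eqref{D22}--\eqref{GG2} imply the hypotheses (A0)--(A2) of \cite{Cianchi2024}). So there is no argument in the paper against which to compare your proposal.

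That said, your outline is the standard route for compact embeddings in (Musielak--)Orlicz spaces and is essentially correct, but one point deserves care. You assert that $\vartheta$ ``inherits a $\Delta_2$-type control from $\vartheta\prec G^{*}$ together with the $\Delta_2$-condition of $G^{*}$'' so that modular convergence $\rho_\vartheta(u_k-u)\to 0$ is equivalent to norm convergence. This inheritance is not true in general: the hypotheses place no $\Delta_2$ assumption on $\vartheta$, and $\vartheta\prec\prec G^{*}$ with $G^{*}\in\Delta_2$ does not force $\vartheta\in\Delta_2$. The clean fix is to prove $\rho_\vartheta\bigl(\lambda(u_k-u)\bigr)\to 0$ for \emph{every} $\lambda>0$, which does imply Luxemburg-norm convergence without any $\Delta_2$ on $\vartheta$. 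Your Vitali split still works: by the very definition of $\prec\prec$ applied with $k=\lambda$, for each $\varepsilon>0$ there is $T$ with $\vartheta(x,\lambda t)\le \varepsilon\,G^{*}(x,t)$ for $t\ge T$ uniformly in $x$, while on $\{|u_k-u|<T\}$ one has $\vartheta(x,\lambda|u_k-u|)\le \vartheta(x,\lambda T)\in L^1(\Omega)$ and dominated convergence applies (your phrasing via ``equi-absolute continuity'' is slightly roundabout; dominated convergence is what is actually used). With this adjustment the argument for (1) is complete, and the extension-operator reduction for (2) is fine in outline, modulo verifying that a bounded extension $W^{1,G}(\Omega)\to W^{1,G}_0(\widetilde\Omega)$ exists under \eqref{GG1}--\eqref{GG2}, which is itself nontrivial in the Musielak--Orlicz setting and is one reason the paper simply cites the result.
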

\begin{remark}
If the function $G(x,t)$ satisfies assumptions \eqref{D22}--\eqref{GG2}, then it also fulfills conditions (A0)--(A1) as stated in L. Diening and A. Cianchi paper~\cite{Cianchi2024}, and it satisfies condition (A2) due to the boundedness of $\Omega \subset \mathbb{R}^n$. Consequently, the embedding result given in Proposition~\ref{embb} remains valid under assumptions \eqref{D22}--\eqref{GG2}.
\end{remark}
\subsection{Technical Tools}
In this subsection, we present several established results from the literature that will serve as essential tools in our proof of the main regularity theorems.
\begin{definition}
\begin{enumerate}
    \item[$(1)$] 
 $u \in W^{1, G(x,t)} (\Omega)$ is called a weak solution of problem \eqref{P} if
\begin{equation}\label{BDP}
\int_{\Omega} A(x, u, Du) \cdot Dv \, \mathrm{d}x = \int_{\Omega} B(x, u, Du) v \, \mathrm{d}x, \quad \forall v \in C_0^\infty (\Omega). \tag{$\mathcal{FDP}$}
\end{equation}

\item[$(2)$]  $u \in W^{1, G(x,t)} (\Omega)$ is called a weak solution of the boundary value problem \eqref{PD} with $\phi \in W^{1, G(x,t)}$ if $u-\phi \in W_0^{1, G(x,t)} (\Omega)$ and \eqref{BDP} holds.

\item[$(3)$]  $u \in W^{1, G(x,t)} (\Omega)$ is called a weak solution of the boundary value problem \eqref{PN} if
\begin{equation}\label{BNP}
\int_{\Omega} A(x, u, Du) \cdot Dv \, \mathrm{d}x = \int_{\Omega} B(x, u, Du) v \, \mathrm{d}x + \int_{\partial \Omega} C(x, u) v \, \mathrm{d}s, \quad \forall v \in C^\infty (\overline{\Omega}). \tag{$\mathcal{FNP}$}
\end{equation}
\end{enumerate}
\end{definition}

Obviously, every bounded generalized solution is also a weak solution. Conversely, in \cite{HBO} we have provided a sufficient condition for the boundedness of weak solutions.

\begin{remark}
In this paper, we study the $C^{1,\alpha}$ regularity of bounded generalized solutions. For a bounded generalized solution $u$, we already know that $u \in L^\infty(\Omega)$; consequently, there exists a positive constant $M$ such that
\[
\sup_{\Omega} |u(x)| := \text{esssup}_{\Omega} |u(x)| \leq M. 
\]
Hence the functions $\lambda(|u|)$ and $\Lambda(|u|)$ in Assumptions \eqref{6.31}--\eqref{6.41} and \eqref{6.51} can be taken as the constants
$\lambda(M)$ and $\Lambda(M)$, respectively. Below we denote $\lambda(M)=\lambda_0$ and $\Lambda(M)=\Lambda_0$.
\end{remark}

According to \cite[Lemma 2.4]{ACAN}, we have the following result.

\begin{proposition}\label{prop1.112}
Let \(A\) satisfies the assumptions \eqref{6.31}--\eqref{6.41}. Then the following estimates hold
\begin{equation}\label{eqc1}
   |A(x,u,\eta)| \leq \Lambda_{1}\,g(x,|\eta|),
\end{equation}
and  
\begin{equation}\label{eqc2}
  A(x,u,\eta) \cdot \eta \geq \lambda_{1}\,G(x,|\eta|),
\end{equation}
where \(\lambda_{1}\) and \(\Lambda_{1}\) are positive constants depending on \(n\), \(\lambda_{0}\), \(\Lambda_{0}\), \(g^{+}\), and \(g^{-}\).
\end{proposition}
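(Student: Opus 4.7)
The plan is to use the structural hypothesis $A(x,u,0)=0$ from \eqref{7.411} and integrate the $\xi$-gradient of $A$ along the straight segment from the origin to $\eta$, reducing both inequalities to a single scalar estimate of the form $\int_0^1 g(x,t|\eta|)/t\,dt \asymp G(x,|\eta|)/|\eta|$. The key technical ingredient is the homogeneity estimate of Proposition~\ref{zoo}(1), which, under \eqref{D22}, controls $g(x,t|\eta|)$ from both sides by powers of $t$ times $G(x,|\eta|)/|\eta|$, so the $t$-integral collapses to an elementary power integral.

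Concretely, the fundamental theorem of calculus applied to $t\mapsto A(x,u,t\eta)$ gives
\[
A(x,u,\eta)=\int_0^1 \nabla_\xi A(x,u,t\eta)\,\eta\,dt.
\]
Taking the scalar product with $\eta$ and invoking the ellipticity \eqref{6.31} at $\xi=t\eta$ yields
\[
A(x,u,\eta)\cdot\eta \;\geq\; \lambda_0\,|\eta|\int_0^1 \frac{g(x,t|\eta|)}{t}\,dt.
\]
For $t\in(0,1]$, Proposition~\ref{zoo}(1) gives $G(x,t|\eta|)\geq t^{g^+}G(x,|\eta|)$, while \eqref{D22} gives $g(x,s)\geq g^-\,G(x,s)/s$. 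Combining these produces $g(x,t|\eta|)\geq g^-\,t^{g^+-1}\,G(x,|\eta|)/|\eta|$, and because $g^+>1$ the integral $\int_0^1 t^{g^+-2}\,dt = 1/(g^+-1)$ is finite, which yields \eqref{eqc2} with the explicit constant $\lambda_1 := \lambda_0 g^-/(g^+-1)$.

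The upper bound \eqref{eqc1} follows from the same representation by a symmetric argument: \eqref{6.41} together with the triangle inequality for the components gives $|A(x,u,\eta)|\leq \Lambda_0\int_0^1 g(x,t|\eta|)/t\,dt$, while the complementary inequalities $G(x,t|\eta|)\leq t^{g^-}G(x,|\eta|)$ and $g(x,t|\eta|)\leq g^+\,G(x,t|\eta|)/(t|\eta|)$ now give $g(x,t|\eta|)\leq g^+\,t^{g^--1}\,G(x,|\eta|)/|\eta|$. Integrating $\int_0^1 t^{g^--2}\,dt = 1/(g^--1)$ (finite since $g^->1$) and finally converting $G(x,|\eta|)/|\eta|$ back to $g(x,|\eta|)$ via the lower half of \eqref{D22} produces $\Lambda_1 = \Lambda_0 g^+/(g^-(g^--1))$. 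The only subtle point in the whole argument is the integrability at $t=0$, and this is precisely what the two-sided bound $1<g^-\leq g^+$ of \eqref{D22} is designed to guarantee; no global or boundary information is used, the argument being a purely pointwise calculation.
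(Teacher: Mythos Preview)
Your argument is correct and is the standard one: the paper does not give its own proof but simply cites \cite[Lemma~2.4]{ACAN}, and what you have written is essentially that lemma's proof adapted to the present notation. One small remark: you explicitly invoke $A(x,u,0)=0$, which is assumption \eqref{7.411} ($\mathcal{A}_3$) rather than \eqref{6.31}--\eqref{6.41} as literally stated in the proposition; this is not a defect of your proof but of the proposition's hypothesis, since without \eqref{7.411} the bound \eqref{eqc1} is plainly false near $\eta=0$, and \eqref{7.411} is in any case part of the paper's standing assumptions on $A$.
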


It follows from Proposition~\ref{prop1.112} that if \(A\) satisfies assumptions
\eqref{6.31}--\eqref{6.41}, then \(A\) also fulfills assumptions
"\((\mathcal{A}_1)\)--\((\mathcal{A}_4)\)" in \cite{HBO}. Consequently, the results
of \cite{HBO} are applicable in our setting. In particular, we can invoke
\cite{HBO} to obtain the following H\"older continuity results for bounded
generalized solutions.

\begin{proposition}\label{prop1.1}
Under the assumptions \eqref{6.31}--\eqref{6.41}, \eqref{6.51}, \eqref{6.61}--\eqref{C1}, \eqref{D22}--\eqref{GG3}, \eqref{H12}, and \eqref{H111}--\eqref{YoungTriple1}, the following statements hold:
\begin{enumerate} 
\item[$(1)$] If $u \in W^{1,G(x,t)} (\Omega)$ is a bounded generalized solution of problem \eqref{P} and satisfies \eqref{M}, then $u \in C_{\text{loc}}^{0,\alpha_1} (\Omega)$, where $\alpha_1$ depends only on $\Lambda_1, \lambda_1, M, n, g^-, g^+, F$, and $L$. Moreover, for any $\Omega_0 \Subset \Omega$, the norm $\Vert u\Vert_{C^{0,\alpha_1} (\overline{\Omega_0})}$ depends only on $\Lambda_1, \lambda_1, M, n, g^-, g^+, F, L$, and $\mathrm{dist}(\Omega_0, \partial \Omega)$.

\item[$(2)$] If $\partial \Omega$ satisfies condition \eqref{AO} and $u \in W^{1,G(x,t)} (\Omega)$ is a bounded generalized solution of the boundary value problem \eqref{PD} satisfying \eqref{M}, then $u \in C^{0,\alpha_1} (\overline{\Omega})$, where $\alpha_1$ depends only on $\Lambda_1, \lambda_1, M, n, g^-, g^+, F$, and $L$, and the norm $\Vert u\Vert_{C^{0,\alpha_1} (\overline{\Omega})}$ depends only on $\Lambda_1, \lambda_1, M, n, g^-, g^+, F, L$, and $\Omega$.

\item[$(3)$] If $\partial \Omega$ is Lipschitz and $u \in W^{1,G(x,t)} (\Omega)$ is a bounded generalized solution of the boundary value problem \eqref{PN} satisfying \eqref{M}, then $u \in C^{0,\alpha_1} (\overline{\Omega})$, where $\alpha_1$ depends only on $\Lambda_1, \lambda_1, M, n, g^-, g^+, F, F_1$, and $L$, and the norm $\Vert u\Vert_{C^{0,\alpha_1} (\overline{\Omega})}$ depends only on $\Lambda_1, \lambda_1, M, n, g^-, g^+, F, F_1$, $\sup|C(\partial \Omega \times [-M, M])|$, and $\Omega$.
\end{enumerate}
\end{proposition}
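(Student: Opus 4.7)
The plan is to reduce all three assertions to the Hölder continuity theorems already established in \cite{HBO}, by checking that our structural hypotheses on $A$, $B$, $C$, $\phi$, $G$, and $H$ imply the ones used there. The crucial translation is supplied by Proposition \ref{prop1.112}: assumptions \eqref{6.31}--\eqref{6.41} yield the pointwise bounds
\[
A(x,u,\eta)\cdot \eta \geq \lambda_1\, G(x,|\eta|), \qquad |A(x,u,\eta)| \leq \Lambda_1\, g(x,|\eta|),
\]
which together with \eqref{6.51} give precisely the coercivity and growth conditions on $(A,B)$ used to derive Hölder continuity in \cite{HBO}. Because $u \in L^{\infty}(\Omega)$ with $\|u\|_{\infty} \leq M$, the functions $\lambda(|u|), \Lambda(|u|)$ reduce to the constants $\lambda_0 = \lambda(M)$ and $\Lambda_0 = \Lambda(M)$, so the quasilinear structure becomes that of a frozen $u$-argument divergence equation with $G(x,t)$-type growth.

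For part (1), interior regularity, I would choose an arbitrary subdomain $\Omega_0 \Subset \Omega$ and a ball $B_{2\rho}(x_0) \subset \Omega$. Using test functions of the form $v = \varphi^{g^+}(u-k)^{\pm}$ with a standard cutoff $\varphi \in C_c^{\infty}(B_{2\rho}(x_0))$ in the weak formulation \eqref{FDV}, and exploiting the coercivity from Proposition \ref{prop1.112} together with \eqref{6.51} and Young's inequality associated with $G$ and its conjugate $\widetilde{G}$ (which satisfy $\Delta_2$ by Remark \ref{compl}), one obtains a modular Caccioppoli estimate of the form
\[
\int_{B_{\rho}(x_0)} G(x,|D(u-k)^{\pm}|)\,\mathrm{d}x \leq C \int_{B_{2\rho}(x_0)} G\!\left(x,\tfrac{(u-k)^{\pm}}{\rho}\right) \mathrm{d}x + C\rho^{n}.
\]
Conditions \eqref{GG1}--\eqref{GG3} are exactly what allows a De Giorgi-type density lemma in the Musielak-Orlicz framework, and iterating the resulting oscillation-decay estimate $\osc_{B_{\tau\rho}} u \leq \theta \osc_{B_{\rho}} u + C\rho^{\alpha_1}$ on dyadic balls produces $u \in C^{0,\alpha_1}_{\text{loc}}(\Omega)$ with the stated dependence of constants; this is the content of the corresponding theorem of \cite{HBO}.

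For part (2), the Dirichlet case, the same argument is carried out on boundary balls $B_{\rho}(x_0)$ with $x_0 \in \partial\Omega$, working with $u - \phi$ extended by a suitable $C^{1,\beta_3}$ extension of $\phi$. Condition \eqref{AO} provides the measure-theoretic nondegeneracy $|B_{\rho} \setminus \Omega| \geq \theta_0 |B_{\rho}|$ needed so that the De Giorgi density lemma can be applied without losing mass across $\partial\Omega$, and the Hölder continuity of the prescribed data $\phi$ on $\partial\Omega$ (from \eqref{H12}) gives the correct boundary oscillation decay. For part (3), Lipschitz regularity of $\partial\Omega$ provides a trace operator on $W^{1,G(x,t)}(\Omega)$ compatible with boundary integrals. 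The Caccioppoli estimate now includes the extra term $\int_{\partial\Omega \cap B_{\rho}} C(x,u)v\,\mathrm{d}s$, bounded via \eqref{6.61} by $c_1 h(x,|u|)|v| + c_2 |v|$; using \eqref{H111}--\eqref{YoungTriple1} together with the trace embedding $W^{1,G(x,t)}(\Omega) \hookrightarrow L^{H(x,t)}(\partial\Omega)$ (which follows from $G \prec H \prec\prec G^{*}$ via Proposition \ref{embb} applied to traces) allows this boundary integral to be absorbed into the energy side modulo an admissible lower-order remainder.

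The main obstacle is the Neumann case, as is typical: the boundary term has to be controlled by a modular trace inequality in the Musielak-Orlicz scale, and one must ensure that the resulting density lemma at boundary points still produces the same decay exponent $\alpha_1$ as in the interior. Fortunately, the triple $G \prec H \prec\prec G^{*}$ encoded in \eqref{YoungTriple1} is precisely the structural room required to make this absorption work uniformly, and once this is in place the oscillation-decay iteration proceeds exactly as in parts (1) and (2). The dependencies of $\alpha_1$ and of $\|u\|_{C^{0,\alpha_1}}$ listed in the statement arise from tracking the constants through the Caccioppoli, density, and iteration steps, with $F_1$ and $\sup|C(\partial\Omega \times [-M,M])|$ entering only via the Neumann boundary term.
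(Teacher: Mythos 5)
Your proposal follows essentially the same route as the paper: reduce (1) and (2) to the Hölder-continuity theorems of \cite{HBO} by using Proposition~\ref{prop1.112} to translate \eqref{6.31}--\eqref{6.41} into the growth and coercivity bounds required there, and handle the Neumann case (3) by establishing a Caccioppoli-type level-set estimate (the paper's class $\widehat{\mathcal{B}}_{G(x,t)}$) in which the extra boundary term is absorbed via the structure $G \prec H \prec\prec G^*$ and a trace bound, after which the standard De Giorgi oscillation-decay iteration applies. You supply more explicit detail than the paper, which merely cites \cite{HBO} and sketches the definition of the new De Giorgi class, but the underlying argument is the same.
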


\begin{proof}
The proofs of statements $(1)$ and $(2)$ can be found in Theorems 1.13 and 1.14 of \cite{HBO}. The proof of statement $(3)$ follows a similar strategy. In \cite{HBO}, to prove statements $(1)$ and $(2)$, we introduced two classes \(\mathcal{B}_{G(x,t)}(\Omega, M, \gamma, \gamma_1, \delta)\) and \(\mathcal{B}_{G(x,t)}(\overline{\Omega}, M, \gamma, \gamma_1, \delta)\). For statement $(3)$, we define a new class \(\widehat{\mathcal{B}}_{G(x,t)}(\overline{\Omega}, M, \gamma, \gamma_1, \delta)\), where $M, \gamma, \gamma_1, \delta$ are positive constants, as the set of all functions $u \in W^{1,G(x,t)} (\Omega)$ satisfying \eqref{M} and  
\begin{equation}\label{2.4}
\int_{A_{k,\rho - \sigma \rho}^{\Omega}} G(x,|Dw|) \, \mathrm{d}x \leq \gamma \int_{A_{k,\rho}^{\Omega}} G\left(x, \frac{|w(x) - k|}{\sigma \rho}\right) \mathrm{d}x + \gamma_1 |A_{k,\rho}^{\Omega}|,
\end{equation}
for every ball \( B_\rho \), every \( \sigma \in (0,1) \), and every threshold \( k \) such that
\begin{equation}\label{2.5}
k \geq \max_{B_\rho\cap\Omega} w(x) - \delta M,
\end{equation}
where $w = \pm u$ and \( A_{k,\rho}^{\Omega} := \{x \in B_\rho\cap \Omega : w(x) > k\} \). Analogous to the proofs of statements $(1)$ and $(2)$ in \cite{HBO}, we can show that under the assumptions of $(3)$, every bounded generalized solution of \eqref{PN} belongs to \(\widehat{\mathcal{B}}_{G(x,t)}(\overline{\Omega}, M, \gamma, \gamma_1, \delta)\) and that \(\widehat{\mathcal{B}}_{G(x,t)}(\overline{\Omega}, M, \gamma, \gamma_1, \delta) \subset C^{0,\alpha_1}(\overline{\Omega})\).
\end{proof}

\begin{remark}
It is clear that all assumptions of Proposition \ref{prop1.1} are satisfied if the assumptions of Theorems \ref{Thm1.1}--\ref{Thm1.3} hold. Hence, if $u$ is a bounded generalized solution as in Theorems \ref{Thm1.1}--\ref{Thm1.3}, then $u \in C^{0,\alpha_1}_{\text{loc}}(\Omega)$ in the case of Theorem \ref{Thm1.1}, and $u \in C^{0,\alpha_1}(\overline{\Omega})$ in the cases of Theorems \ref{Thm1.2} and \ref{Thm1.3}.
\end{remark}
The following proposition is extracted from \cite{Ala1}.
\begin{proposition}\label{Prop2.2}
(See \cite{Ala1}.) Let $A$ satisfy Assumptions \eqref{6.31}--\eqref{7.41}. Then we have
\begin{align}\label{SIM}
(A(x, u, \eta) - A(x, u, \eta'))(\eta - \eta') \geq 
c_0 G\left(x,|\eta - \eta'|\right),
\end{align}
where  $c_0$ is a positive constant depending only on $g^-$.
\end{proposition}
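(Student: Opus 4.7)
The plan is to prove this monotonicity inequality by the classical convex-calculus argument, transplanted to the Musielak--Orlicz setting via the two-sided scaling of $G$ supplied by \eqref{D22}. Only \eqref{6.31} and \eqref{6.41} enter substantively; assumptions \eqref{7.411} and \eqref{7.41} play no role.

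First I would apply the fundamental theorem of calculus along the segment $\eta_s := \eta' + s(\eta - \eta')$, $s \in [0,1]$, which meets the singular set $\{0\}$ of $A(x,u,\cdot)$ in at most one point, to obtain
\begin{equation*}
\bigl(A(x,u,\eta) - A(x,u,\eta')\bigr)\cdot(\eta-\eta')
= \int_0^1 \sum_{i,j=1}^n \frac{\partial A^i}{\partial \xi_j}(x,u,\eta_s)\,(\eta_i - \eta_i')(\eta_j - \eta_j')\,ds.
\end{equation*}
Inserting the ellipticity bound \eqref{6.31} pointwise in $s$ with test vector $\eta-\eta'$ yields
\begin{equation*}
\bigl(A(x,u,\eta) - A(x,u,\eta')\bigr)\cdot(\eta-\eta')
\;\geq\; \lambda_0\,|\eta-\eta'|^2 \int_0^1 \frac{g(x,|\eta_s|)}{|\eta_s|}\,ds.
\end{equation*}
It then remains to bound the line integral below by $c\,G(x,|\eta-\eta'|)/|\eta-\eta'|^2$ for some $c = c(g^-) > 0$.

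To close this, I would split into two regimes depending on the ratio $\tau := |\eta-\eta'|/\max\{|\eta|,|\eta'|\}$. In the \emph{degenerate} regime $\tau \geq \tfrac12$, a subinterval $I \subset [0,1]$ of definite length can be selected on which $|\eta_s| \asymp |\eta-\eta'|$, so that the integrand is $\asymp g(x,|\eta-\eta'|)/|\eta-\eta'|$ on $I$, and \eqref{D22} converts $|\eta-\eta'|\,g(x,|\eta-\eta'|)$ into $G(x,|\eta-\eta'|)$. In the \emph{non-degenerate} regime $\tau < \tfrac12$, with (say) $|\eta'| \geq 2|\eta-\eta'|$, one has $|\eta_s| \asymp |\eta'|$ throughout $[0,1]$, so the integral is $\asymp g(x,|\eta'|)/|\eta'|$, and the required bound reduces to comparing $\tau^2\,G(x,|\eta'|)$ with $G(x,\tau|\eta'|)$; this is handled by the standard scaling inequalities $G(x,\tau t) \leq \tau^{g^-} G(x,t)$ and $G(x,\tau t) \geq \tau^{g^+} G(x,t)$ for $\tau \in (0,1]$, both consequences of \eqref{D22}.

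The main obstacle lies in the non-degenerate regime when $g^- > 2$, since then the elementary estimate $\tau^2 \geq \tau^{g^-}$ fails and a more delicate interpolation between the $g^-$- and $g^+$-scaling bounds is needed to extract a constant depending only on $g^-$. This refined case analysis is precisely the content of \cite[Lemma~2.3]{Ala1}, which I would invoke directly to conclude the argument with the claimed $c_0 = c_0(g^-) > 0$.
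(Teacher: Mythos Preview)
The paper does not supply its own proof of this proposition; it simply attributes the result to \cite{Ala1}. Your outline---integrating $\partial_\xi A$ along the segment $\eta_s=\eta'+s(\eta-\eta')$, invoking \eqref{6.31}, and then splitting into a degenerate and a non-degenerate regime---is the standard route, and insofar as both you and the paper ultimately defer to \cite{Ala1}, the approaches coincide.

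There is, however, a genuine error in your case analysis. For $\tau\in(0,1)$ the elementary inequality $\tau^{2}\geq\tau^{g^{-}}$ holds \emph{precisely} when $g^{-}\geq 2$, not when $g^{-}\leq 2$. Thus the non-degenerate regime closes immediately for $g^{-}\geq 2$, and the obstruction you describe actually arises when $g^{-}<2$. More seriously, in that sub-quadratic range the inequality \eqref{SIM} as stated is \emph{false}. Already for the model case $A(\eta)=|\eta|^{p-2}\eta$ with $1<p<2$ (so $G(t)=t^{p}/p$, $g^{-}=g^{+}=p$), taking $\eta'=e_{1}$ and $\eta=(1+\varepsilon)e_{1}$ gives
\[
\bigl(A(\eta)-A(\eta')\bigr)\cdot(\eta-\eta')=\bigl((1+\varepsilon)^{p-1}-1\bigr)\varepsilon\sim(p-1)\varepsilon^{2},
\qquad
G(|\eta-\eta'|)=\tfrac{1}{p}\varepsilon^{p},
\]
and $\varepsilon^{2}/\varepsilon^{p}\to 0$ as $\varepsilon\to 0$. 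No ``refined case analysis'' can repair this: the sharp monotonicity bound in the sub-quadratic regime is
\[
\bigl(A(x,u,\eta)-A(x,u,\eta')\bigr)\cdot(\eta-\eta')\;\gtrsim\;\frac{g(x,|\eta|+|\eta'|)}{|\eta|+|\eta'|}\,|\eta-\eta'|^{2},
\]
which is strictly weaker than $G(x,|\eta-\eta'|)$ when $|\eta-\eta'|\ll|\eta|+|\eta'|$. So either the proposition carries a tacit hypothesis $g^{-}\geq 2$ (in which case your argument, with the case labels corrected, goes through with a constant depending on $g^{-}$ and $\lambda_{0}$), or the statement---and hence any plan to prove it verbatim---is defective in the range $1<g^{-}<2$.
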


\begin{remark}\label{Rem2}
For given $\delta > 0$, from
\[
\ln (t)\leq ct^\delta,\ \text{for all}\ t\geq 1,
\]
it follows, by \eqref{7.41}, that
\begin{equation}\label{eq2.10}
|A(x_1, u_1, \eta) - A(x_2, u_2, \eta)| \leq \Lambda_{\delta}  \left(|x_1 - x_2|^{\beta_1} + |u_1 - u_2|^{\beta_2}\right) \max\left\lbrace g(x_1,1+|\eta|),g(x_2,1+|\eta|)\right\rbrace\left(1+(1+|\eta|)^{\delta}\right), 
\end{equation}
where  $\Lambda_{\delta}$ is a positive constant depending on $ \Lambda_0, g^- , g^+$, and $ \delta$.
\end{remark}
\section{Proofs of main results}
This section is devoted to the proof of Theorems~\ref{Thm1.1}--\ref{Thm1.3}. For the reader's convenience, we follow the approach developed in \cite{Fan2007}, adapting it to the Musielak-Orlicz framework considered in this paper.
\subsection{Proof of Theorem \ref{Thm1.1}}
We begin by establishing a new higher integrability result for bounded generalized solutions of \eqref{P}, which is stated in Lemma~\ref{Lem1.1} below. The proof of Lemma~\ref{Lem1.1} relies on the continuity properties of bounded generalized solutions, which are guaranteed by Proposition~\ref{prop1.1}.
\begin{lemma}\label{Lem1.1}
Let the assumptions \eqref{6.31}--\eqref{6.41}, \eqref{6.51}, and \eqref{D22}--\eqref{GG3} be satisfied and let $u \in W^{1,G(x,t)}(\Omega)$ be a bounded generalized solution of \eqref{P} satisfying \eqref{M}. Then, given an open subset $\Omega_0 \Subset \Omega$, there exist positive constants $R_0, c_0$ and $\delta_0$, depending only on $\Lambda_1,\lambda_1, F, g^-, g^+, M, n$ and $\mathrm{dist}(\Omega_0, \partial\Omega)$, such that, for every ball $B_{2R} \subset \Omega_0$ with $R \in (0, R_0]$ and for $\delta \in (0, \delta_0]$, holds
\begin{equation}\label{3.1}
\left( \fint_{B_R} G(x,|Du|)^{(1+\delta)} \mathrm{d}x \right)^{\frac{1}{1+\delta}} \leq c_0 \left( 1 + \fint_{B_{2R}} G(x,|Du|) \mathrm{d}x \right). 
\end{equation}
\end{lemma}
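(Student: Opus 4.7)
The plan is to establish a reverse Hölder inequality with an exponent gap on concentric balls, and then invoke a Gehring-type self-improvement in the Musielak--Orlicz framework to upgrade it into the stated higher integrability bound. The argument proceeds in three main steps: a Caccioppoli estimate, a Sobolev--Poincaré estimate adapted to $G(x,t)$, and Gehring.

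\textbf{Step 1 (Caccioppoli).} Fix $B_{2R}\subset\Omega_0$ with $R$ small. Choose a standard cutoff $\eta\in C_c^\infty(B_{2R})$ with $\eta\equiv 1$ on $B_R$ and $|D\eta|\le C/R$. Set $\bar u:=\fint_{B_{2R}} u\,\mathrm{d}x$ and test \eqref{FDV} with $v=(u-\bar u)\eta^{g^+}$; this is admissible by Proposition~\ref{prop1.1}, since $u\in L^\infty$ and the class $W^{1,G(x,t)}_0$ is stable under multiplication by Lipschitz cutoffs (thanks to Remark~\ref{compl} and $\Delta_2$). Using the ellipticity bound \eqref{eqc2}, the growth bound \eqref{eqc1}, the growth of $B$ in \eqref{6.51}, Young's inequality (applied with the complementary function $\widetilde G$ from Lemma~\ref{lm1}), and Proposition~\ref{zoo}(1) to absorb the cutoff factors, one obtains
\begin{equation*}
\int_{B_R} G(x,|Du|)\,\mathrm{d}x \;\le\; C \int_{B_{2R}} G\!\left(x,\tfrac{|u-\bar u|}{R}\right)\mathrm{d}x \;+\; C\,|B_{2R}|,
\end{equation*}
with $C=C(n,g^-,g^+,\lambda_1,\Lambda_1,F,M)$.

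\textbf{Step 2 (Sobolev--Poincaré with a gap).} The next and most delicate step is to bound the right-hand side by a sublinear expression of the gradient. The goal is a bound of the form
\begin{equation*}
\fint_{B_{2R}} G\!\left(x,\tfrac{|u-\bar u|}{R}\right)\mathrm{d}x \;\le\; C\left(\fint_{B_{2R}} G(x,|Du|)^{\theta}\,\mathrm{d}x\right)^{1/\theta} + C,
\end{equation*}
for some $\theta=\theta(n,g^-,g^+)\in(0,1)$. Here the $x$-dependence of $G$ is handled via a standard freezing/shift argument on $B_{2R}$: using the log-Hölder condition \eqref{GG3} on $r(x,t)=tg(x,t)/G(x,t)$, together with \eqref{GG1}--\eqref{GG2}, one can compare $G(x,t)$ with $G(x_0,t)$ on $B_{2R}$ up to a multiplicative error bounded by $R^{-C\eta(R)}\le L$ for $R$ sufficiently small (so that $R^{\eta(R)}$ is universally controlled by $L=\exp L_0$). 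Once $G$ is effectively frozen, we apply the classical Sobolev--Poincaré inequality in the Orlicz space $L^{G(x_0,\cdot)}$ (equivalently, Proposition~\ref{embb} with $\vartheta$ chosen just below $G^*$), combined with Proposition~\ref{zoo*} to compare with $G$ itself and with a self-improvement of Poincaré to exponent $\theta<1$.

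\textbf{Step 3 (Reverse Hölder and Gehring).} Combining Steps 1 and 2 and normalizing by $|B_R|$ yields
\begin{equation*}
\fint_{B_R} G(x,|Du|)\,\mathrm{d}x \;\le\; C\left(\fint_{B_{2R}} G(x,|Du|)^{\theta}\,\mathrm{d}x\right)^{1/\theta} + C,
\end{equation*}
which is a reverse Hölder inequality with exponent gap, valid on every ball $B_{2R}\subset\Omega_0$ of radius at most some $R_0$. By Gehring's lemma (in the version that tolerates an additive constant, and formulated for the scalar weight $G(x,|Du(x)|)$ as an $L^1_{\mathrm{loc}}$ function on $\Omega_0$), there exist $\delta_0>0$ and $c_0>0$ such that \eqref{3.1} holds for all $\delta\in(0,\delta_0]$ and all $R\in(0,R_0]$, with the asserted dependence of constants.

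\textbf{Main obstacle.} The principal technical difficulty lies in Step 2: the Sobolev--Poincaré estimate in $L^{G(x,t)}$ is not immediate because $G$ depends on $x$. The argument crucially exploits the log-Hölder condition \eqref{GG3} to freeze $G$ on small balls with controlled error, and the $(A1)$-type condition \eqref{GG2} to invert $G$ in a uniform way across the ball. The estimate $R^{\eta(R)}\le L$ is the quantitative translation of \eqref{GG3} that makes the freezing step harmless to constants, and it is precisely this point that explains why the constants in \eqref{3.1} depend on $L=\exp L_0$ rather than on $L_0$ directly.
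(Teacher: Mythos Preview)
Your three-step plan (Caccioppoli $\to$ Sobolev--Poincar\'e with gap $\to$ Gehring) is exactly the architecture the paper uses, and Steps~2--3 are fine in spirit. In particular, your freezing argument in Step~2 is a legitimate alternative to the paper's approach, which instead cites a ready-made Musielak--Orlicz Sobolev--Poincar\'e inequality from \cite[Theorem~6.2.8]{Harjulehto2019} and thus bypasses the freezing entirely. Your route is more laborious but would work.

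There is, however, a genuine gap in Step~1. The lower-order term coming from $B$ has \emph{natural growth}: testing with $v=(u-\bar u)\eta^{g^+}$ produces
\[
\int_{B_{2R}} B(x,u,Du)\,(u-\bar u)\,\eta^{g^+}\,\mathrm{d}x
\;\le\; \Lambda_0 \int_{B_{2R}} \bigl(1+G(x,|Du|)\bigr)\,|u-\bar u|\,\eta^{g^+}\,\mathrm{d}x,
\]
and the piece $\Lambda_0\int G(x,|Du|)\,|u-\bar u|\,\eta^{g^+}$ already carries $G(x,|Du|)$ at full power, so Young's inequality alone cannot reduce it to a lower-order term. The only way to absorb it into the left-hand side $\lambda_1\int G(x,|Du|)\eta^{g^+}$ is to make $|u-\bar u|$ small---specifically $\le \tfrac{\lambda_1}{4\Lambda_0}$---on $B_{2R}$. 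This is precisely where the paper uses Proposition~\ref{prop1.1}(1): the a~priori H\"older continuity $u\in C^{0,\alpha_1}_{\mathrm{loc}}$ yields a radius $R_1>0$ such that $\operatorname{osc}_{B_{2R}} u \le \tfrac{\lambda_1}{4\Lambda_0}$ for $R\le R_1$, after which absorption goes through. You invoked Proposition~\ref{prop1.1} only to justify admissibility of the test function (which in fact needs nothing beyond $u\in L^\infty$, already assumed), not for this smallness step. Without it, your Caccioppoli inequality does not follow.
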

\begin{proof}
    Let $\Omega_0 \Subset \Omega$ be given. By Proposition \ref{prop1.1}-$(1)$, $u \in C(\overline{\Omega_0})$, consequently there is $R_1 > 0$ such that
\begin{equation}\label{3.2}
|u(x_1) - u(x_2)| \leq \frac{\lambda_1}{4\Lambda_1}, \quad \forall x_1, x_2 \in \Omega_0 \quad \text{with } |x_1 - x_2| \leq 4R_1. 
\end{equation}

Consider the concentric balls $B_R \subset B_{2R} \subset \Omega_0$ with $R \leq R_1$. Take $\xi \in C_0^\infty(B_{2R})$ such that $0 \leq \xi \leq 1, \xi = 1$ on $B_R$ and $|D\xi| \leq 4/R$. Set $\omega = \frac{1}{|B_{2R}|} \int_{B_{2R}} u \mathrm{d}x$. Taking $v \equiv \xi^{g_+}( u - \omega)$ as a test function in  \eqref{FDV}, we obtain
\begin{align}\label{eq55}
      \int_{B_{2R}} \xi^{g^+} A(x, u, Du) \cdot Du  \mathrm{d}x &+ \int_{B_{2R}} g^+\xi^{g^+-1} ( u - \omega) A(x, u, Du) \cdot D\xi  \mathrm{d}x \\
      &= \int_{B_{2R}} B(x, u, Du) \xi^{g^+} ( u - \omega)  \mathrm{d}x.    
\end{align}
Thus, in light of inequalities \eqref{eqc1}--\eqref{eqc2} and the assumption \eqref{6.51}, we derive the inequality
\begin{align}\label{4.31}
\begin{split}
\lambda_1 \int_{B_{2R}} G(x,|Du|) \xi^{g^+}  \mathrm{d}x &\leq \Lambda_1 \int_{B_{2R}} g(x,|Du|) \xi^{g^+-1} |D\xi|(\vert u - \omega\vert)  \mathrm{d}x \\
&\quad + \Lambda_0 \int_{B_{2R}} \xi^{g^+} \left(1+G(x,|Du|)\right) (\vert u - \omega\vert)  \mathrm{d}x.
\end{split}
\end{align}
We proceed to estimate each term on the right-hand side of \eqref{4.31}.

 Employing \eqref{L1} and Young's inequality \eqref{Yi}, select \(\varepsilon_1 \in (0,1)\) such that
\[
\Lambda_1 g^+ \varepsilon_1^{\frac{g^+}{g^+-1}} \leq \frac{\lambda_1 }{4},
\]
 gives that
\begin{align}\label{4.32}
\begin{split}
&\Lambda_1 \int_{B_{2R}} g(x,|Du|) \xi^{g^+-1} |D\xi|(\vert u - \omega\vert)  \mathrm{d}x \\
&\quad \leq \Lambda_1  \int_{B_{2R}} \widetilde{G}\left(x, g(x,|Du|) \xi^{g^+-1}\varepsilon_1\right)  \mathrm{d}x + \Lambda_1  \int_{B_{2R}} G\left(x, \frac{|D\xi|(\vert u - \omega\vert)}{\varepsilon_1}\right)  \mathrm{d}x \\
&\quad \leq \Lambda_1  \varepsilon_1^{\frac{g^+}{g^+-1}} \int_{B_{2R}} \widetilde{G}\left(x, g(x,|Du|) \right) \xi^{g^+}  \mathrm{d}x + \Lambda_1  \varepsilon_1^{-g^+} \int_{B_{2R}} G\left(x, \frac{4(\vert u - \omega\vert)}{R}\right)  \mathrm{d}x \\
&\quad \leq \Lambda_1  g^+ \varepsilon_1^{\frac{g^+}{g^+-1}} \int_{B_{2R}} G(x,|Du|)  \xi^{g^+}  \mathrm{d}x + \Lambda_1  \varepsilon_1^{-g^+} 4^{g^+} \int_{B_{2R}} G\left(x, \frac{\vert u - \omega\vert}{R}\right)  \mathrm{d}x \\
&\quad \leq \frac{\lambda_1 }{4} \int_{B_{2R}} G(x,|Du|) \xi^{g^+}  \mathrm{d}x + d_1 \int_{B_{2R}} G\left(x, \frac{\vert u - \omega\vert}{R} \right)  \mathrm{d}x,
\end{split}
\end{align}
with \(d_1 = d_1(\Lambda_1 , g^-, g^+) > 0\).
Note that \eqref{3.2} implies that 
\begin{equation}\label{3.2m}
|u(x) - \omega| \leq \frac{\lambda_1}{4\Lambda_1}\quad \text{for}\quad x \in B_{2R}.
\end{equation}
 In light \eqref{3.2m}, we see that
\begin{align}\label{33eq}
    \begin{split}
  \Lambda_0 \int_{B_{2R}} \xi^{g^+} \left(1+G(x,|Du|)\right) (\vert u - \omega\vert)  \mathrm{d}x & =\Lambda_0 \int_{B_{2R}} \xi^{g^+}  (\vert u - \omega\vert)  \mathrm{d}x+\Lambda_0 \int_{B_{2R}}   \xi^{g^+} G(x,|Du|) (\vert u - \omega\vert)  \mathrm{d}x\\
  &\leq \frac{\lambda_1}{4\Lambda_1} |B_{2R}|+ \frac{\lambda_1 }{4} \int_{B_{2R}} G(x,|Du|) \xi^{g^+}  \mathrm{d}x\\
  &\leq c_1 |B_{2R}|+ \frac{\lambda_1 }{4} \int_{B_{2R}} G(x,|Du|) \xi^{g^+}  \mathrm{d}x
 \end{split}
\end{align}
with \(c_1 = c_1(\Lambda_1,\lambda_1) > 0\).

Combining estimates \eqref{4.31}--\eqref{33eq}, we conclude that
\begin{equation}\label{3.2m1}
\int_{B_{2R}} G(x,|Du|) \xi^{g^+}  \mathrm{d}x \leq c_2 \int_{B_{2R}} G\left(x, \frac{\vert u - \omega\vert}{R} \right)  \mathrm{d}x + c_2 |B_{2R}|,
\end{equation}
where $c_2:=c_2(c_1,d_1,\lambda_1)=c_2(g^-,g^+, \Lambda_1,\lambda_1)>0$.
Since $\int_{B_R} G(x,|Du|) \mathrm{d}x \leq \int_{B_{2R}} G(x,|Du|) \xi^{p_+} \mathrm{d}x$, from from \eqref{3.2m1} we obtain the Caccioppoli type inequality
\begin{equation}\label{4.36}
\int_{B_{R}} G(x,|Du|)  \mathrm{d}x \leq c_2 \int_{B_{2R}} G\left(x, \frac{\vert u - \omega\vert}{R} \right)  \mathrm{d}x + c_2 |B_{2R}|,
\end{equation}
From  Sobolev-Poincar\'e inequality \cite[Theorem 6.2.8]{Harjulehto2019} it follows that there exist $R_0 \in (0, R_1]$ and $\varepsilon \in (0, 1)$ such that when $R \in (0, R_0]$ holds the following inequality
\begin{equation}\label{4.37}
 \fint_{B_{2R}} G\left(x, \frac{\vert u - \omega\vert}{R} \right)  \mathrm{d}x\leq  \fint_{B_{2R}} G\left(x, \vert D u\vert \right) ^{\frac{1+\varepsilon}{1+\varepsilon}} \mathrm{d}x \leq c_3+c_3\left( \fint_{B_{2R}} G\left(x, \vert D u\vert \right) ^{\frac{1}{1+\varepsilon}} \mathrm{d}x\right)^{1+\varepsilon}.  
\end{equation}
Combining \eqref{4.36} and \eqref{4.37}, we get
\begin{equation}\label{4.38}
\fint_{B_{R}} G(x,|Du|)  \mathrm{d}x \leq c+c\left( \fint_{B_{2R}} G\left(x, \vert D u\vert \right) ^{\frac{1}{1+\varepsilon}} \mathrm{d}x\right)^{1+\varepsilon},
\end{equation}
which implies \eqref{3.1} (see \cite[Chapter 5, Proposition 1.1]{11}).
\end{proof}
Below, we suppose that Assumptions \eqref{6.31}--\eqref{7.41}, \eqref{6.51}, \eqref{D22}--\eqref{GG3}, and \eqref{H111}--\eqref{YoungTriple1} are satisfied, and that \( u \in W^{1,G(x,t)}(\Omega) \cap L^\infty(\Omega) \) is a bounded generalized solution of problem \eqref{P} satisfying \eqref{M}. By Proposition \ref{prop1.1}(1), it follows that \( u \in C_{\text{loc}}^{0,\alpha_1}(\Omega) \).

For a subset \( E \subset \Omega \) and \( t \geq 0 \), we define the following functions:
\begin{equation}\label{eq3.99}
\begin{cases}
G_E^+(t) & := \displaystyle{\sup_{x \in E} G(x,t)}, \\
G_E^-(t) & := \displaystyle{\inf_{x \in E} G(x,t)}.
\end{cases}
\end{equation}

Now, fix a point \( x_0 \in \Omega \) and a subdomain \( \Omega_0 \Subset \Omega \) such that \( B(x_0, 2R_1) \subset \Omega_0 \). Since \( u \in C^{0,\alpha_1}_{\text{loc}}(\Omega) \), there exists a constant \( L_2 > 0 \) such that
\begin{equation}\label{eq3.88}
|u(x_1) - u(x_2)| \leq L_2 |x_1 - x_2|^{\alpha_1} \quad \text{for all } x_1, x_2 \in \Omega_0.
\end{equation}

Let \( R_0 \) and \( \delta_0 \) be the constants from Lemma \ref{Lem1.1}, without loss of generality, we assume \( R_0 \leq 1 \). We choose \( R_1 \) to be sufficiently small so that \( R_1 \leq R_0 \) and the following conditions hold:
\begin{equation}\label{eq3.8}
\int_{B(x_0, 2R_1)} G(x, |Du|)  \mathrm{d}x \leq 1,
\end{equation}
and
\begin{equation}\label{eq3.9}
\left(1+G_{B(x_0, 2R_1)}^+(t)\right)^{1 + \frac{\delta_0}{2}} \leq \left(1+G_{B(x_0, 2R_1)}^-(t)\right)^{1 + \delta_0} \quad \text{for all } t \geq 0.
\end{equation}
From these, it follows that \[ |Du| \in L^{\left(G_{B(x_0, 2R_1)}^+(t)\right)^{1 + \frac{\delta_0}{2}}} (B(x_0, 2R_1)). \]

Now, consider two concentric balls \( B(x_c, R) =: B_R \subset B_{2R} \) contained within \( B(x_0, 2R_1) \); note that their center \( x_c \) need not coincide with \( x_0 \). For \( t \geq 0 \), we denote
\[
G_{B_{2R}}^*(t) := G_{B_{2R}}^+(t),
\]
and let \( x_* \in \overline{B_{2R}} \) be a point such that \( G_{B_{2R}}^*(t) = G(x_*, t) \).

Define the ``frozen'' operator \( \overline{A}(\eta) = A(x_*, u(x_*), \eta) \), and consider the associated boundary value problem:
\begin{equation}\label{FP}
\begin{cases}
- \operatorname{div} \overline{A}(Dv) = 0 & \text{in } B_R, \\
v = u & \text{on } \partial B_R.
\end{cases} \tag{$\mathcal{FP}_1$}
\end{equation}

The following lemma, which is essential for our analysis, is drawn from the work of Lieberman \cite[Lemmas 5.1 and 5.2]{Li1991} or Antonini \cite[Theorem 4.1]{ACAN}.

\begin{lemma}\label{Lem1.2}
There exists a unique solution \( v \in C^{1,\lambda_2}_{\text{loc}} (B_R) \) to the problem \eqref{FP} satisfying the following estimates:

\begin{equation}\label{eq3.10}
\sup_{x \in B_{R/2}} G\left(x_*, |Dv(x)|\right) \leq c_1 R^{-n} \int_{B_R} G\left(x_*, |Dv|\right) \mathrm{d}x,
\end{equation}

\begin{equation}\label{eq3.11}
\fint_{B_{\rho}} G\left(x_*, |Dv(x) - \lbrace Dv\rbrace_\rho|\right) \mathrm{d}x \leq c_2 \left( \frac{\rho}{R} \right)^{\lambda_2} \fint_{B_R} G\left(x_*, |Dv(x) - \lbrace Dv\rbrace_R|\right) \mathrm{d}x, \quad \forall \rho \in (0, R),
\end{equation}

\begin{equation}\label{eq3.12}
\int_{B_R} G\left(x_*, |Dv(x)|\right) \mathrm{d}x \leq c_3 \int_{B_R} \left( 1 + G\left(x_*, |Du(x)|\right) \right) \mathrm{d}x,
\end{equation}

\begin{equation}\label{eq3.13}
\sup_{B_R} |u - v| \leq \operatorname{osc}_{B_R} u,
\end{equation}

where \( c_1, c_2, c_3 \) are positive constants depending on \( n,g^-, g^+, \lambda_0, \Lambda_0 \), \( \lambda_2 \in (0,1) \), and 
\[
\lbrace Dv\rbrace_\rho := \frac{1}{|B_{\rho}|} \int_{B_{\rho}} Dv(x)  \mathrm{d}x.
\]
\end{lemma}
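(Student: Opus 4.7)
The plan is to reduce the lemma to the classical Lieberman/Uhlenbeck theory for Orlicz-type operators with $x$-independent structure. Since $x_*$ and $u(x_*)$ are fixed, $\overline{A}(\eta)=A(x_*,u(x_*),\eta)$ depends only on $\eta$, and if we set $\overline{G}(t):=G(x_*,t)$ then \eqref{6.31}--\eqref{7.411} together with \eqref{D22} transfer verbatim to a standard Orlicz structure governed by $\overline{G}$, with ellipticity and growth constants $\lambda_0,\Lambda_0$ and indices $g^-,g^+$ preserved.

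\textbf{Existence and uniqueness.} I would first apply Browder--Minty in the reflexive space $u+W_0^{1,\overline{G}}(B_R)$: strict monotonicity of $\overline{A}$ comes from Proposition~\ref{Prop2.2} (inequality \eqref{SIM}), hemicontinuity from the $C^1$-regularity of $A$ off the origin, and coercivity from \eqref{eqc2}. Uniqueness is immediate from strict monotonicity. Alternatively, one could fabricate the primitive $\overline{\mathcal{G}}(\eta):=\int_0^1 \overline{A}(s\eta)\cdot\eta\,ds$ and minimize directly; the two approaches give the same solution by uniqueness.

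\textbf{Interior $C^{1,\lambda_2}$ regularity and the decay estimates \eqref{eq3.10}--\eqref{eq3.11}.} These are precisely the content of Lieberman's Lemmas~5.1--5.2 in \cite{Li1991} (equivalently Antonini's \cite[Theorem~4.1]{ACAN}) applied to the homogeneous equation $-\operatorname{div}\overline{A}(Dv)=0$: the sup-estimate \eqref{eq3.10} is obtained by a Moser iteration on powers of $|Dv|$ combined with a Caccioppoli inequality for the differentiated equation, while \eqref{eq3.11} follows from a perturbation/dichotomy argument yielding Campanato-type excess decay for $Dv$. Since the frozen structure is an honest Orlicz one satisfying $\Delta_2$ for both $\overline{G}$ and its conjugate (Remark~\ref{compl}), these results apply verbatim, with constants $c_1,c_2,\lambda_2$ depending only on $n,g^-,g^+,\lambda_0,\Lambda_0$.

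\textbf{Energy comparison \eqref{eq3.12}.} The plan is to test the weak form of \eqref{FP} against $\varphi=v-u\in W_0^{1,\overline{G}}(B_R)$, which yields
\[
\int_{B_R}\overline{A}(Dv)\cdot Dv\,\mathrm{d}x=\int_{B_R}\overline{A}(Dv)\cdot Du\,\mathrm{d}x.
\]
Using \eqref{eqc2} on the left and the Young-type inequality \eqref{Yi} together with \eqref{eqc1} on the right, $\Lambda_1 g(x_*,|Dv|)|Du|\leq \widetilde{\overline{G}}(\varepsilon\,\Lambda_1 g(x_*,|Dv|))+\overline{G}(|Du|/\varepsilon)$, and absorbing $\widetilde{\overline{G}}(g(x_*,|Dv|))\lesssim\overline{G}(|Dv|)$ via \eqref{L1} into the left-hand side gives \eqref{eq3.12}. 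Finally, \eqref{eq3.13} is a direct consequence of the weak maximum principle for the strictly monotone divergence-form operator $\overline{A}$: since $v-u=0$ on $\partial B_R$ and $v$ is $\overline{A}$-harmonic, $\min_{\partial B_R}u\le v\le \max_{\partial B_R}u$ in $B_R$, whence $\sup_{B_R}|v-u|\le \operatorname{osc}_{B_R}u$.

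The main obstacle is not any single step but rather verifying that the frozen constants remain independent of $x_*$ and $R$; this is guaranteed because $g^-,g^+,\lambda_0,\Lambda_0$ bound $\overline{G}$ uniformly in $x_*$, so Lieberman's constants are uniform too. No new technical input beyond careful bookkeeping is required.
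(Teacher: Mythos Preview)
Your proposal is correct and aligns with the paper's own treatment: the paper does not give an independent proof of this lemma but simply cites Lieberman \cite[Lemmas~5.1 and~5.2]{Li1991} and Antonini \cite[Theorem~4.1]{ACAN}, exactly the references you invoke for \eqref{eq3.10}--\eqref{eq3.11}. Your additional sketches for existence/uniqueness, the energy comparison \eqref{eq3.12}, and the maximum-principle argument for \eqref{eq3.13} are all standard and sound, and your observation that the constants are uniform in $x_*$ because they depend only on $n,g^-,g^+,\lambda_0,\Lambda_0$ is the right point to make.
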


\begin{lemma}\label{Lem1.3}
Let $v$, as mentioned in Lemma \ref{Lem1.2}, be the unique solution of the problem \eqref{FP}. Then
\begin{equation}\label{eq3.14}
\int_{B_R} G\left(x_*,|Du(x) - Dv(x)|\right) \mathrm{d}x \leq cR^{\beta/2} \int_{B_{2R}} \left( 1 + G\left(x, |Du(x)|\right) \right) \mathrm{d}x, 
\end{equation}
where $c$ is a positive constant depending only on $n, g^-, g^+, \lambda_0, \Lambda_0, F, M$ and $\mathrm{dist}(\Omega_0, \partial\Omega)$.
\end{lemma}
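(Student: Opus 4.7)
The plan is to use $w := u - v \in W_0^{1,G(x,t)}(B_R)$ as a test function in both weak formulations: the one for $u$ (giving the $A$–$B$ identity) and the one for the frozen problem \eqref{FP} (where the test-function integral vanishes). Since $u-v$ is admissible by construction of $v$, subtracting the two identities yields
\begin{equation*}
\int_{B_R}\bigl[\overline{A}(Du)-\overline{A}(Dv)\bigr]\cdot(Du-Dv)\,\mathrm{d}x
= -\int_{B_R}\bigl[A(x,u,Du)-\overline{A}(Du)\bigr]\cdot(Du-Dv)\,\mathrm{d}x
+\int_{B_R}B(x,u,Du)(u-v)\,\mathrm{d}x.
\end{equation*}
Proposition \ref{Prop2.2} applied to the frozen operator $\overline{A}$ bounds the left-hand side from below by $c_0\int_{B_R}G(x_*,|Du-Dv|)\,\mathrm{d}x$, which is exactly the quantity we want to estimate.

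Next I would estimate the two right-hand-side terms. For the $B$-term, combine the growth bound $|B(x,u,Du)|\le \Lambda_0(1+G(x,|Du|))$ from \eqref{6.51} with the maximum principle bound $\sup_{B_R}|u-v|\le \mathrm{osc}_{B_R}u \le L_2(2R)^{\alpha_1}$ from \eqref{eq3.13} and \eqref{eq3.88}; this gives $CR^{\alpha_1}\int_{B_{2R}}(1+G(x,|Du|))\,\mathrm{d}x$, with no further effort. For the $A$-term, apply the refined continuity estimate \eqref{eq2.10} from Remark \ref{Rem2} at the points $(x,u(x))$ and $(x_*,u(x_*))$, use $|x-x_*|\le 4R$ and $|u(x)-u(x_*)|\le L_2|x-x_*|^{\alpha_1}$ to produce a prefactor $R^{\beta_0}$ with $\beta_0:=\min\{\beta_1,\alpha_1\beta_2\}$, and then apply the $G(x_*,\cdot)$–Young inequality
\begin{equation*}
|A(x,u,Du)-\overline A(Du)|\,|Du-Dv|
\le \varepsilon\,G(x_*,|Du-Dv|) + C_\varepsilon\,\widetilde G\bigl(x_*,|A(x,u,Du)-\overline A(Du)|\bigr)
\end{equation*}
to absorb an $\varepsilon$-fraction of the left-hand-side quantity and leave a dual term.

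The main work—and the principal obstacle—is showing
\begin{equation*}
\int_{B_R}\widetilde G\bigl(x_*,|A(x,u,Du)-\overline A(Du)|\bigr)\,\mathrm{d}x
\le C R^{\beta_0 \widetilde{g^-}}\int_{B_R}\bigl(1+G(x,|Du|)\bigr)^{1+\delta_1}\,\mathrm{d}x
\end{equation*}
for a small $\delta_1>0$. Here one uses Proposition \ref{zoo}(2) to pull the factor $R^{\beta_0}$ out of $\widetilde G(x_*,\cdot)$, and the relations \eqref{L1}, \eqref{D3} together with $(\mathcal G_2)$ and $(\mathcal G_3)$ to convert $\widetilde G(x_*,g(\cdot,1+|Du|))$ into $CG(x,1+|Du|)$ on $B_{2R}\subset B(x_0,2R_1)$, where $R_1$ is small enough to make $(\mathcal G_2)$–$(\mathcal G_3)$ effective (cf.\ \eqref{eq3.9}). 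The extra factor $(1+|Du|)^\delta$ from \eqref{eq2.10} produces the exponent $1+\delta_1$, with $\delta_1$ chosen as small as desired by taking $\delta$ small.

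Finally, to control the super-linear integrand, I would invoke the higher-integrability estimate of Lemma \ref{Lem1.1}: shrinking $R_1$ if necessary so that $R\le R_0$ and $\delta_1\le \delta_0/2$, Hölder's inequality combined with \eqref{3.1} yields
\begin{equation*}
\int_{B_R}(1+G(x,|Du|))^{1+\delta_1}\,\mathrm{d}x
\le C|B_R|^{-\delta_1}\Bigl(\int_{B_{2R}}(1+G(x,|Du|))\,\mathrm{d}x\Bigr)^{1+\delta_1}.
\end{equation*}
Assembling the estimates, absorbing the $\varepsilon$-term, and choosing $\delta_1$ small enough so that $\beta_0\widetilde{g^-}-n\delta_1\ge \beta/2$ for an appropriate $\beta\in(0,\min\{\alpha_1,\beta_0\})$, we obtain \eqref{eq3.14}. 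All constants depend only on the quantities listed in the statement, because the Hölder norm $L_2$ from \eqref{eq3.88} depends on $\mathrm{dist}(\Omega_0,\partial\Omega)$ via Proposition \ref{prop1.1}(1), and the constants from Lemmas \ref{Lem1.1}–\ref{Lem1.2} depend only on the structural data.
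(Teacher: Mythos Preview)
Your proposal is correct and follows essentially the same route as the paper: test with $u-v$, bound the monotonicity term from below via Proposition~\ref{Prop2.2}, split the right-hand side into the $A$-difference term and the $B$-term, and close using the higher integrability of Lemma~\ref{Lem1.1} together with the $x_*$-to-$x$ conversion \eqref{eq3.9} and the smallness condition \eqref{eq3.8}. The only minor technical difference is that you treat the $A$-term via Young's inequality with an $\varepsilon$-absorption, whereas the paper instead writes $|Du-Dv|\le|Du|+|Dv|$ and uses \eqref{eq3.12} to control the $|Dv|$-piece, thereby avoiding the absorption step; both variants are standard and yield the same final estimate.
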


\begin{proof}
Denote $$I := \int_{B_R} (\overline{A}(Du) - \overline{A}(Dv))(Du - Dv) \mathrm{d}x.$$
Since $v$ is a solution of \eqref{FP}, we have
\[
I = \int_{B_R} \overline{A}(Du)(Du - Dv) \mathrm{d}x
= \underbrace{\int_{B_R} \left(\overline{A}(Du) - A(x, u, Du)\right)(Du - Dv) \mathrm{d}x}_{I_1} +  \underbrace{\int_{B_R} A(x, u, Du)(Du - Dv) \mathrm{d}x}_{I_2}.
\]
\textbf{Estimation of $I_1$:}  
Using \eqref{eq2.10}, Young's inequality \eqref{Yi}, and inequalities \eqref{L1}, \eqref{eq3.88} and \eqref{eq3.12}, we get
\begin{align}\label{23}
\begin{split}
    I_1 & = \int_{B_R} (\overline{A}(Du) - A(x,u,Du)) \cdot (Du - Dv) \,\mathrm{d}x=\int_{B_R} (A(x_*,u(x_*),Du) - A(x,u,Du)) \cdot (Du - Dv) \,\mathrm{d}x\\
    & \leq \Lambda_0\int_{B_R}\underbrace{\left(\vert x_*-x\vert^{\beta_1}+\vert u(x_*)-u(x)\vert^{\beta_2}\right) \left(1+\max\left \lbrace g(x_*,|Du|), g(x,|Du|)\right\rbrace \right)}_{\text{by inequality \eqref{7.41}}} \left(|Du-Dv|\right)\, \mathrm{d}x \\
    &\leq \Lambda_0 \underbrace{\left(R^{\beta_1} + R^{\alpha_1 \beta_2}\right)}_{\text{by inequality \eqref{eq3.88}}} 
    \int_{B_R} \left(1+\max\left \lbrace g(x_*,|Du|), g(x,|Du|)\right\rbrace\right)  \left(|Du| + |Dv|\right)\, \mathrm{d}x \\
    &\leq c_7 R^{\beta} 
    \int_{B_R} \max\left \lbrace g(x_*,|Du|), g(x,|Du|)\right\rbrace |Du| + |Du|+|Dv| \, \mathrm{d}x \\ 
    &+ \underbrace{c_7 R^{\beta}
    \int_{B_R} \max\left \lbrace g(x_*,|Du|), g(x_*,|Du|)\right\rbrace  |Dv|\, \mathrm{d}x}_{\text{by assumption \eqref{D22} and the fact that $G(x,0)=0$}}\\
     &\leq c_{9} R^{\beta}\left[ 
    \int_{B_R} \underbrace{1+G(x_*,|Du|) +G(x_*,|Dv|)}_{\text{by assumption \eqref{GG1} }}\, \mathrm{d}x +\int_{B_R} g(x_*,|Du|) |Dv|\, \mathrm{d}x\right]\\ 
     &\leq c_{10} R^{\beta}\left[ 
    \underbrace{\int_{B_R} 1+G(x_*,|Du|) \, \mathrm{d}x}_{\text{by inequality \eqref{eq3.12} }} +\underbrace{\int_{B_R} \widetilde{G}\left(x_*,g(x_*,|Du|)\right) + G(x_*, |Dv|)\, \mathrm{d}x} _{\text{by Young's inequality \eqref{Yi}}}\right]\\
    &\leq \underbrace{c_{11} R^{\beta}
    \int_{B_R} 1+G(x_*,|Du|) \, \mathrm{d}x}_{\text{by inequalities \eqref{L1} and \eqref{eq3.12} }} \leq \underbrace{c_{12} R^{\beta}
    \int_{B_R} \left(1+G(x_*,|Du|)\right)^{(1+\frac{\delta_0}{2})} \, \mathrm{d}x}_{\text{since $1+\frac{\delta_0}{2}>1$}}\\ 
    &\leq \underbrace{c_{12} R^{\beta}
    \int_{B_R} \left(1+G_{B(x_0, 2R_1)}^-(|Du|)\right)^{(1+\delta_0)} \, \mathrm{d}x}_{\text{by inequality \eqref{eq3.9}}}\leq \underbrace{c_{12} R^{\beta}
    \int_{B_R} \left(1+G(x,|Du|)\right)^{(1+\delta_0)} \, \mathrm{d}x}_{\text{by equality \eqref{eq3.99}}}\\
    &\leq c_{13} R^{\beta+n} +c_{14} R^{\beta}
    \int_{B_R} G(x,|Du|)^{(1+\delta_0)} \, \mathrm{d}x  \leq c_{13} R^{\beta+n} +\underbrace{c_{15} R^{\beta}
   \left( \int_{B_{2R}} 1+ G(x,|Du|) \, \mathrm{d}x\right)^{(1+\delta_0)}}_{\text{by Lemma \ref{Lem1.1}}} \\ &\leq \underbrace{c_{16} R^{\beta}
   \int_{B_{2R}} 1+ G(x,|Du|) \, \mathrm{d}x,}_{\text{by inequality \eqref{eq3.8}}} 
   \end{split}
\end{align}
where $\beta=\min\{\beta_1,\alpha_1\beta_2\}$ and $c_{16}$ is a positive constant that depends on $n, g^-, g^+, \lambda_0, \Lambda_0, F$ and $M$.\\
\textbf{Estimation of $I_2$:} By inequalities \eqref{Yi} and \eqref{eq3.13}, and assumptions \eqref{6.51} and \eqref{M}, we have
\begin{align}
\begin{split}
    \int_{B_R} A(x, u, Du)(Du - Dv) \mathrm{d}x&=  \int_{B_R} B(x, u, Du)(u - v) \mathrm{d}x \leq  \underbrace{\int_{B_R} \vert B(x, u, Du)\vert \mathrm{d}x \cdot\operatorname{osc}_{B_R} u}_{\text{by \eqref{eq3.13}}}\\
      & \leq  \underbrace{\int_{B_R}\Lambda(|u|)(1 + G(x,|Du|) \mathrm{d}x}_{\text{by assumption \eqref{6.51}}} \cdot\operatorname{osc}_{B_R} u \leq \Lambda_0\int_{B_R}(1 + G(x,|Du|) \mathrm{d}x \cdot\operatorname{osc}_{B_R} u\\
       & \leq c_{17} \left[\int_{B_R}  G(x, \vert Du\vert )\mathrm{d}x +R^n\right]\cdot\operatorname{osc}_{B_R} u \leq c_{17} R^{\alpha_1} \left[\int_{B_R}  G(x, \vert Du\vert )\mathrm{d}x +R^n\right]\\
      & \leq c_{18} R^{\beta} \int_{B_{2R}} 1+ G(x, \vert Du\vert )\mathrm{d}x,
      \end{split}
\end{align}
where $c_{18}$ is a positive constant that depends on $n, g^-, g^+, \lambda_0, \Lambda_0, F$ and $M$.\\
On one side, combining the estimates for $I_1$ and $I_2$, we conclude that
\begin{equation}\label{eq3.16}
I \leq c_{19}R^{\beta}  \int_{B_{2R}} 1+ G(x, \vert Du\vert )\mathrm{d}x.
\end{equation}
On the other side, from Proposition \ref{Prop2.2}, one has that
\begin{align}
    c_0\int_{B_R} G\left(x_*,|Du(x) - Dv(x)|\right) \mathrm{d}x \leq I.
\end{align}
Thus, 
\begin{align}
    \int_{B_R} G\left(x_*,|Du(x) - Dv(x)|\right) \mathrm{d}x \leq cR^{\beta/2}  \int_{B_{2R}} 1+ G(x, \vert Du\vert )\mathrm{d}x,
\end{align}
where $c$ depends only on $n, g^-, g^+, \lambda_0, \Lambda_0, F, M$, and $\mathrm{dist}(\Omega_0, \partial\Omega)$. The proof is complete.
\end{proof}
\begin{lemma}\label{Lem1.4}
Let $B(x_0, 2R_1)$ be as above. Then, given $\tau \in (0, n)$, there exist positive constants $R_\tau < \frac{R_1}{16}$ and $c_\tau$ depending only on  $n, g^-, g^+, \lambda_0, \Lambda_0, F,M$ $\mathrm{dist}(\Omega_0, \partial\Omega)$ and $\tau$, such that
\begin{equation}\label{eq3.17}
\int_{B(x_c, \rho)} G\left(x_*^\rho,|Du|\right) \mathrm{d}x \leq c_\tau \rho^{n-\tau}, \quad \forall x_c \in B\left(x_0, \frac{R_1}{2}\right), \quad \forall \rho \in (0, R_\tau), 
\end{equation}
where $G\left(x_*^\rho,t\right)=\displaystyle{\sup_{x \in B(x_c, 2\rho)} G(x,t)}$, for all $t\geq 0$.
\end{lemma}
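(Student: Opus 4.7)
The plan is to establish the Morrey-type decay for $|Du|$ on small balls centred in $B(x_0,R_1/2)$ through a Campanato iteration built on the comparison with the frozen problem \eqref{FP}.

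\textbf{Setup.} Fix $x_c\in B(x_0,R_1/2)$ and a radius $\rho$ small enough that $B(x_c,2\rho)\subset B(x_0,2R_1)$. Choose $y_*\in\overline{B(x_c,2\rho)}$ with $G(y_*,\cdot)=G^+_{B(x_c,2\rho)}(\cdot)=G(x_*^\rho,\cdot)$, and let $v$ solve \eqref{FP} on $B(x_c,\rho)$ with the frozen operator $\overline{A}(\eta)=A(y_*,u(y_*),\eta)$. Set
\[
\Phi(\rho):=\int_{B(x_c,\rho)} G\bigl(x_*^\rho,|Du|\bigr)\,\mathrm{d}x.
\]
The key monotonicity observation is that $B(x_c,2\rho')\subset B(x_c,2\rho)$ whenever $\rho'\le\rho$ implies $G(x_*^{\rho'},t)\le G(x_*^\rho,t)$, so any estimate obtained with the larger function $G(x_*^\rho,\cdot)$ at the radius $\rho'$ automatically dominates $\Phi(\rho')$.

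\textbf{Iteration inequality.} For $0<\rho'\le\rho/4$, I would split $|Du|\le|Du-Dv|+|Dv|$ and apply Proposition~\ref{zoo}(1) to produce
\[
\Phi(\rho')\le C\!\int_{B(x_c,\rho)}\! G(x_*^\rho,|Du-Dv|)\,\mathrm{d}x+C\!\int_{B(x_c,\rho')}\! G(x_*^\rho,|Dv|)\,\mathrm{d}x.
\]
The $Dv$-integral is controlled by \eqref{eq3.10}--\eqref{eq3.12}, giving
\[
\int_{B(x_c,\rho')}\! G(x_*^\rho,|Dv|)\,\mathrm{d}x\le C\Bigl(\tfrac{\rho'}{\rho}\Bigr)^{\!n}\!\!\int_{B(x_c,\rho)}\!\bigl(1+G(x_*^\rho,|Du|)\bigr)\mathrm{d}x\le C\Bigl(\tfrac{\rho'}{\rho}\Bigr)^{\!n}\!\bigl[\Phi(\rho)+\rho^n\bigr].
\]
The $Du-Dv$-integral is controlled by Lemma~\ref{Lem1.3} (whose proof only requires $B(x_c,2\rho)\subset B(x_0,2R_1)$ and hence carries over verbatim to balls not centred at $x_0$), combined with $G(x,t)\le G(x_*^\rho,t)$ for $x\in B(x_c,2\rho)$:
\[
\int_{B(x_c,\rho)}\! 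G(x_*^\rho,|Du-Dv|)\,\mathrm{d}x\le c\rho^{\beta/2}\!\!\int_{B(x_c,2\rho)}\!\bigl(1+G(x,|Du|)\bigr)\mathrm{d}x\le c\rho^{\beta/2}\bigl[\Phi(2\rho)+\rho^n\bigr].
\]
Together, this produces the dyadic-iteration inequality
\[
\Phi(\rho')\le C_0\!\Bigl[\bigl(\tfrac{\rho'}{\rho}\bigr)^n+\rho^{\beta/2}\Bigr]\bigl[\Phi(2\rho)+\rho^n\bigr],\qquad 0<\rho'\le\rho/4.
\]

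\textbf{Conclusion via a standard geometric iteration.} Choosing $R_\tau\in(0,R_1/16)$ small enough that $C_0\rho^{\beta/2}$ stays below the smallness threshold required by the Campanato-type iteration lemma (e.g.\ the one already invoked in the proof of Lemma~\ref{Lem1.1}, from \cite[Chap.~5]{11}), one concludes that for every $\mu<n$ there is a constant $c_\mu$ with $\Phi(\rho)\le c_\mu\rho^\mu$ for all $\rho\in(0,R_\tau)$. Setting $\mu=n-\tau$ gives \eqref{eq3.17}. Uniformity in $x_c\in B(x_0,R_1/2)$ follows because every constant depends only on the data allowed in the statement.

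\textbf{Main obstacle.} The main technical hurdle is to guarantee that the iteration constant $C_0$ does \emph{not} degenerate as $\rho\to 0$, even though the frozen point $y_*$ changes with the scale and $G(x,\cdot)$ genuinely depends on $x$. The assumption \eqref{GG2}, together with the log-Hölder control \eqref{GG3}, is precisely what makes $G(x_*^\rho,\cdot)$ and $G(x,\cdot)$ uniformly comparable on $B(x_c,2\rho)$ and what ensures that the higher-integrability exponent $\delta_0$ produced by Lemma~\ref{Lem1.1} is available on every such subball. A secondary issue is choosing $R_\tau$ small enough so that the comparison estimate of Lemma~\ref{Lem1.3} and the normalizations \eqref{eq3.8}--\eqref{eq3.9} hold simultaneously on every ball of radius $\le R_\tau$ centred in $B(x_0,R_1/2)$.
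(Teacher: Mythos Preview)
Your proposal is correct and follows essentially the same strategy as the paper's proof: both freeze coefficients, compare with the solution $v$ of \eqref{FP}, split via $|Du|\le|Du-Dv|+|Dv|$, bound the two pieces using Lemma~\ref{Lem1.3} and the sup/energy estimates \eqref{eq3.10}--\eqref{eq3.12}, and then feed the resulting inequality $\Phi(\rho')\le C_0\bigl[(\rho'/\rho)^n+\rho^{\beta/2}\bigr]\bigl[\Phi(2\rho)+\rho^n\bigr]$ into a standard Morrey-type iteration (the paper cites \cite[Lemma~3.2]{Acerbi2007}, with its scales written as $(\rho,R)$ rather than your $(\rho',\rho)$). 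The only minor slip is your pointer to \cite[Chap.~5]{11} for the iteration step --- that chapter is the Gehring/reverse-H\"older material, not the geometric decay lemma you actually need --- but the intended tool is standard and the argument goes through.
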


\begin{proof}
Let $x_c \in B\left(x_0, \frac{R_1}{2}\right)$ and $\rho < \frac{R_1}{16}$. Let $R > 0$ be such that $B(x_c, \rho) =: B_\rho \subset B_{\frac{R}{2}} \subset B_{8R} \subset B(x_0, 2R_1)$. Let $v$ be the solution of \eqref{FP}. Then from Proposition \ref{zoo}, the convexity of the function $G(x_*,\cdot)$, Lemma \ref{Lem1.2}, and inequality \eqref{eq3.10}, it follows that
\begin{align}\label{eq3.18}
    \begin{split}
       \int_{B(x_c, \rho)} G\left(x_*^\rho,|Du|\right) \mathrm{d}x&\leq \int_{B(x_c, \rho)} G\left(x_*^\rho,|Du-Dv|+|Dv|\right) \mathrm{d}x \\
       &\leq 2^{g^+-1}\left[\int_{B(x_c, \rho)} G\left(x_*^\rho,|Du-Dv|\right) \mathrm{d}x +\int_{B(x_c, \rho)} G\left(x_*^\rho,|Dv|\right) \mathrm{d}x \right]\\
       &\leq 2^{g^+-1}\left[\int_{B_R} G\left(x_*,|Du-Dv|\right) \mathrm{d}x +\int_{B_\rho}\left(\sup_{x \in B_{R/2}} G\left(x_*, |Dv(x)|\right)\right) \mathrm{d}x \right]\\
       &\leq 2^{g^+-1}\left[\underbrace{cR^{\beta/2} \int_{B_{2R}} \left( 1 + G\left(x, |Du(x)|\right) \right) \mathrm{d}x}_{\text{by Lemma \ref{Lem1.2}}} +\rho^n\sup_{x \in B_{R/2}} G\left(x_*, |Dv(x)|\right)  \right]\\
        &\leq 2^{g^+-1}\left[cR^{\beta/2} \int_{B_{2R}} \left( 1 + G\left(x_*, |Du(x)|\right) \right) \mathrm{d}x +\underbrace{c_3 \left(\frac{\rho}{R}\right)^n\int_{B_R}  G\left(x_*, |Du(x)|\right)  \mathrm{d}x}_{\text{by \eqref{eq3.10}}} \right]\\
         &\leq c_{20}\left[R^n + \left( R^{\beta/2}+\left(\frac{\rho}{R}\right)^n\right)  \int_{B_{2R}} G\left(x_*, |Du(x)|\right)  \mathrm{d}x  \right],\\
    \end{split}
\end{align}
where $c_{20}$ is a positive constant depends on $n, g^-, g^+, \lambda_0, \Lambda_0, F,M$, and $\mathrm{dist}(\Omega_0, \partial\Omega)$.\\ 
The final estimate is now in a standard Morrey-type form. Therefore, by \cite[Lemma 3.2]{Acerbi2007}, for fixed $\tau \in (0, n)$, there are positive constants $R_\tau < \frac{R_1}{16}$ and $c_\tau$ depending only on  $n, g^-, g^+, \lambda_0, \Lambda_0, F,M$, $\mathrm{dist}(\Omega_0, \partial\Omega)$ and $\tau$, such that \eqref{eq3.17} holds.
\end{proof}

\begin{remark}[Remark 3.3 of \cite{Acerbi2007}]\label{rem2}
By the arbitrariness of $\tau \in (0, n)$, Lemma \ref{Lem1.4} implies that $u \in C^{0,\alpha}_{\text{loc}}(\Omega)$ for all $\alpha \in (0, 1)$.
\end{remark}
Now, we are ready to prove Theorem \ref{Thm1.1}.
\begin{proof}[Proof of Theorem \ref{Thm1.1}]
Let $B(x_0, 2R_1)$, $\beta$ and $\lambda_2$ be as above. Set $\tau: = \frac{\beta \lambda_2}{4(n+\lambda_2)}$ and $\theta := \frac{\beta}{2(n+\lambda_2)}$. Let $x_c \in B\left(x_0, \frac{R_1}{4}\right)$ and $\rho < \left(\frac{R_\tau}{4}\right)^{1+\theta}$ where $R_\tau$ is as in Lemma \ref{Lem1.4}. Set $R = \left(2\rho\right)^{\frac{1}{1+\theta}}$. Then $2\rho < R < 2R < R_\tau$ and $B(x_c, \rho) =: B_\rho \subset B_{\frac{R}{2}} \subset B_{32R} \subset B(x_0, 2R_1)$. Let $v$ be the unique solution of the problem \eqref{FP}. We begin by estimating the oscillation of $Du$ in $B_\rho$. Using  Proposition \ref{zoo}, the convexity of the function $G(x_*,\cdot)$, Lemmas \ref{Lem1.3} and \ref{Lem1.4}, and inequalities \eqref{eq3.11} and \eqref{eq3.12}, we proceed as follows
\begin{align}\label{eq3.21}
    \begin{split}
       & \int_{B_{\rho}} G\left(x_*, |Du(x) - \lbrace Du\rbrace_\rho|\right) \mathrm{d}x  \leq c_{21}\int_{B_{\rho}} G\left(x_*, |Du(x) - \lbrace Dv\rbrace_\rho|\right) \mathrm{d}x\\
        & \leq c_{21}\left[\int_{B_{\rho}} G\left(x_*, |Dv(x) - \lbrace Dv\rbrace_\rho|\right) \mathrm{d}x+\int_{B_{\rho}} G\left(x_*, |Du(x) - Dv(x)|\right) \mathrm{d}x\right]\\
        & \leq c_{21}\left[\underbrace{c_2 \rho^n \left( \frac{\rho}{R} \right)^{\lambda_2}R^{-n}\int_{B_{R}} G\left(x_*, |Dv(x) - \lbrace Dv\rbrace_R|\right) \mathrm{d}x}_{\text{by inequality \eqref{eq3.11}}}+\underbrace{cR^{\beta/2} \int_{B_{2R}} \left( 1 + G\left(x, |Du(x)|\right) \right) \mathrm{d}x}_{\text{by Lemma \ref{Lem1.3}}}\right]\\
        & \leq c_{22}\left[ \rho^n \left( \frac{\rho}{R} \right)^{\lambda_2}R^{-n}\int_{B_{R}} G\left(x_*, |Dv(x)\right) \mathrm{d}x+R^{\beta/2} \int_{B_{2R}} \left( 1 + G\left(x_*, |Du(x)|\right) \right) \mathrm{d}x\right]\\
        & \leq c_{22}\left[ \underbrace{c_3\rho^n \left( \frac{\rho}{R} \right)^{\lambda_2}R^{-n} \int_{B_R} \left( 1 + G\left(x_*, |Du(x)|\right) \right) \mathrm{d}x}_{\text{by inequality \eqref{eq3.12}}}+R^{\beta/2} \int_{B_{2R}} \left( 1 + G\left(x_*, |Du(x)|\right) \right) \mathrm{d}x\right]\\
        & \leq c_{23}\left(\rho^n \left( \frac{\rho}{R} \right)^{\lambda_2}R^{-n} +R^{\beta/2}\right)\int_{B_R} \left( 1 + G\left(x_*, |Du(x)|\right) \right) \mathrm{d}x  \leq \underbrace{c_{23}\left(\rho^n \left( \frac{\rho}{R} \right)^{\lambda_2}R^{-n} +R^{\beta/2}\right)c_\tau R^{n-\tau}}_{\text{by Lemma \ref{Lem1.4}}} \\
        &= C_{\tau,1} \rho^n \left(\frac{\rho}{R}\right)^{\lambda_2} R^{-\tau}+C_{\tau,2} R^{\beta/2+n-\tau} =\underbrace{\frac{C_{\tau,1}}{2^n}R^{n(1+\theta)}\left(\frac{1}{2}R^\theta\right)^{\lambda_2}R^{-\tau}}_{\text{since}\ \rho=\frac{1}{2}R^{1+\theta}}+C_{\tau,2} R^{\beta/2+n-\tau} \\
&=\frac{C_{\tau,1}}{2^{n+1}}R^{n(1+\theta)+\theta\lambda_2-\tau}+C_{\tau,2} R^{\beta/2+n-\tau} =\frac{C_{\tau,1}}{2^{n+1}}R^{\theta(n+\lambda_2)+n-\tau}+C_{\tau,2} R^{\beta/2+n-\tau}\\
&
 =\underbrace{\frac{C_{\tau,1}}{2^{n+1}}R^{\beta/2+n-\tau}}_{\text{since}\ \theta = \frac{\beta}{2(n+\lambda_2)}}+C_{\tau,2} R^{\beta/2+n-\tau}\leq C_{\tau,3} R^{\beta/2+n-\tau}=\underbrace{C_{\tau,3} (2\rho)^{\frac{\beta/2+n-\tau}{1+\theta}}}_{\text{since}\ R=(2\rho)^{\frac{1}{1+\theta}}}=C_{\tau,4} \rho^{n+\varepsilon}.
    \end{split}
\end{align}
where $\varepsilon := \frac{\beta \lambda_2}{4(n+\lambda_2)(1+\theta)}$ and $C_{\tau,4}$ is a positive constant depending on $\tau$, $n, g^-, g^+, \lambda_0, \Lambda_0, F,M$, and $\mathrm{dist}(\Omega_0, \partial\Omega)$. We now derive a Campanato-type estimate for $Du$. From \eqref{eq3.21}, using the classical H\"older inequality, Proposition \ref{zoo}, and assumption \eqref{GG1}, we obtain
\begin{align}\label{eq3.22}
    \begin{split}
      & \int_{B_\rho} |Du - \lbrace Du\rbrace_\rho|^{g^-} \mathrm{d}x\\
       & = \int_{  B_\rho^-} |Du - \lbrace Du\rbrace_\rho|^{g^-} \mathrm{d}x+\int_{  B_\rho^+} |Du - \lbrace Du\rbrace_\rho|^{g^-} \mathrm{d}x \\
       &\leq \left(\int_{  B_\rho^-} |Du - \lbrace Du\rbrace_\rho|^{g^+} \mathrm{d}x\right)^{\frac{g^-}{g^+}}\times\left(\int_{  B_\rho^-} 1^{\frac{g^+-g^-}{g^-}} \mathrm{d}x\right)^{\frac{g^-}{g^+-g^-}}+F^{-1}\int_{  B_\rho^+} G(x_*,|Du - \lbrace Du\rbrace_\rho|) \mathrm{d}x \\  
       & \leq \rho^{n\frac{g^-}{g^+-g^-}} \left(F^{-1}\int_{  B_\rho^-} G(x_*,|Du - \lbrace Du\rbrace_\rho|) \mathrm{d}x\right)^{\frac{g^-}{g^+}}+F^{-1}\int_{  B_\rho^+} G(x_*,|Du - \lbrace Du\rbrace_\rho|) \mathrm{d}x \\
       & \leq \rho^{n\frac{g^-}{g^+-g^-}} \left(F^{-1}\int_{  B_\rho} G(x_*,|Du - \lbrace Du\rbrace_\rho|) \mathrm{d}x\right)^{\frac{g^-}{g^+}}+F^{-1}\int_{  B_\rho} G(x_*,|Du - \lbrace Du\rbrace_\rho|) \mathrm{d}x\\
       & \leq \rho^{n\frac{g^-}{g^+-g^-}} \left(F^{-1}C_{\tau,4} \rho^{n+\varepsilon}\right)^{\frac{g^-}{g^+}}+F^{-1}C_{\tau,4} \rho^{n+\varepsilon}\leq C_{\tau,F}\left(\rho^{n+\varepsilon\frac{g^-}{g^+}}+  \rho^{n+\varepsilon}\right) \leq C_{\tau,F}\rho^{n+\varepsilon\frac{g^-}{g^+}},
    \end{split}
\end{align}
where $B_\rho^-:=\left\lbrace x\in B_\rho:\ |Du - \lbrace Du\rbrace_\rho|\leq 1\right\rbrace$, $B_\rho^+:=\left\lbrace x\in B_\rho:\ |Du - \lbrace Du\rbrace_\rho|> 1\right\rbrace$ and $C_{\tau,F}$ is a positive constant depending on $\tau$, $n, g^-, g^+, \lambda_0, \Lambda_0, F,M$, and $\mathrm{dist}(\Omega_0, \partial\Omega)$.

From this estimate and Campanato’s characterization of H\"older continuity, we conclude that $u \in C^{1,\alpha}\left(B(x_0, \frac{R_1}{8})\right)$ with $\alpha = \frac{\varepsilon}{g^+}$. This completes the proof of Theorem \ref{Thm1.1}.
\end{proof}
\subsection{Proof of Theorem \ref{Thm1.2}}
Theorem \ref{Thm1.1} establishes interior $C^{1,\alpha}$ regularity for bounded generalized solutions. Therefore, to prove Theorem \ref{Thm1.2}, it suffices to establish the corresponding boundary $C^{1,\alpha}$ regularity.

\begin{lemma}\label{Lem2.1}
Assume that conditions \eqref{6.31}--\eqref{7.41}, \eqref{6.51}, \eqref{D22}--\eqref{GG3}, and \eqref{H111}--\eqref{YoungTriple1} are satisfied, and that $\partial \Omega$ is Lipschitz. Let $u \in W^{1,G(x,t)}(\Omega)$ be a bounded generalized solution of the boundary value problem \eqref{PD} satisfying \eqref{M}. Then there exist positive constants $R_0$, $c_0$, and $\delta_0$, depending only on $\Lambda_1,\lambda_1, F, g^-, g^+,L, M, n$, and $\beta_3$, such that $|Du| \in L^{G(x,t)^{(1+\delta_0)}}(\Omega)$ and for every $x \in \overline{\Omega}$, $R \in (0, R_0)$, and $\delta \in (0, \delta_0]$, the following inequality holds
\begin{equation}\label{eq4.1}
\left( \fint_{\Omega(x,R)} G(x,|Du|)^{(1+\delta)} \mathrm{d}x \right)^{\frac{1}{1+\delta}} \leq c_0 \left( 1 + \fint_{\Omega(x,2R)} G(x,|Du|) \mathrm{d}x \right),
\end{equation}
where $\Omega(x, R) := B(x, R) \cap \Omega$.
\end{lemma}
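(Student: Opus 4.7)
The strategy mirrors the interior higher integrability argument of Lemma \ref{Lem1.1}, but is modified to accommodate the Dirichlet condition. For interior balls $B(x_0, 2R) \subset \Omega$ the conclusion is already furnished by Lemma \ref{Lem1.1}, so the substantive task is the boundary case. The plan is to establish a boundary Caccioppoli estimate by exploiting the admissible test function $\xi^{g^+}(u-\tilde\phi)$, then to invoke a Sobolev--Poincar\'e inequality adapted to functions with Dirichlet trace, and finally to apply Gehring's self-improvement lemma.

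First, since $\partial\Omega \in C^{1,\beta_3}$ and $\phi \in C^{1,\beta_3}(\partial\Omega)$ by \eqref{H12}, I fix a global extension $\tilde\phi \in C^{1,\beta_3}(\mathbb{R}^n)$ of $\phi$, so that $w := u - \tilde\phi \in W^{1,G(x,t)}_0(\Omega) \cap L^\infty(\Omega)$ and its zero extension to $\mathbb{R}^n$ lies in $W^{1,G(x,t)}(\mathbb{R}^n)$. For $x_0 \in \overline{\Omega}$, $R$ small, and $\xi \in C_c^\infty(B(x_0,2R))$ with $\xi \equiv 1$ on $B(x_0,R)$ and $|D\xi| \leq 4/R$, testing \eqref{FDV} against $\xi^{g^+}w$ and repeating the estimates leading to \eqref{3.2m1}, using \eqref{eqc1}--\eqref{eqc2}, \eqref{6.51}, \eqref{M}, Young's inequality \eqref{Yi}, \eqref{L1}, and the $L^\infty$ bound on $D\tilde\phi$, yields the Caccioppoli-type bound
\[
\int_{\Omega(x_0, R)} G(x, |Du|)\,dx \leq c\int_{\Omega(x_0, 2R)} G\!\left(x, \frac{|w|}{R}\right) dx + c|B(x_0, 2R)|.
\]
Next, since the zero extension of $w$ vanishes on $B(x_0,2R) \setminus \Omega$, and the Lipschitz regularity of $\partial\Omega$ ensures a uniform density bound $|B(x_0,2R) \setminus \Omega| \geq \theta_0 |B(x_0, 2R)|$, the Musielak-Orlicz Sobolev--Poincar\'e inequality \cite[Theorem 6.2.8]{Harjulehto2019}, applied on the whole ball to the zero extension, produces some $\varepsilon>0$ depending only on $n, g^-, g^+, F, L$ such that
\[
\fint_{B(x_0,2R)} G\!\left(x, \frac{|w|}{R}\right) dx \leq c + c\left(\fint_{\Omega(x_0,2R)} G(x, |Du|)^{\frac{1}{1+\varepsilon}} dx\right)^{1+\varepsilon}.
\]

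Combining the two displays produces a reverse H\"older inequality of the form
\[
\fint_{\Omega(x_0,R)} G(x, |Du|)\,dx \leq c\left[1 + \left(\fint_{\Omega(x_0,2R)} G(x, |Du|)^{\frac{1}{1+\varepsilon}} dx\right)^{1+\varepsilon}\right],
\]
uniformly in $x_0 \in \overline{\Omega}$ and small $R$, after which the Gehring-type self-improving lemma \cite[Chapter 5, Proposition 1.1]{11} delivers \eqref{eq4.1}. The principal technical obstacle is the Sobolev--Poincar\'e inequality near $\partial\Omega$ in the Musielak-Orlicz setting: one must ensure both that the zero extension of $w$ is admissible on the full ball and that the conditions on $G$ are strong enough to support the underlying Poincar\'e inequality with uniform constants. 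The Lipschitz boundary provides the measure-density required for the first point, while \eqref{GG1}--\eqref{GG2} together with the log-H\"older-type control \eqref{GG3} handle the second.
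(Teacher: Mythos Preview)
Your proposal is correct and follows essentially the same approach as the paper's proof: extend $\phi$ to a $C^{1,\beta_3}$ (hence $W^{1,\infty}$) function on $\mathbb{R}^n$, test the equation with $\xi^{g^+}(u-\tilde\phi)$ to obtain a boundary Caccioppoli inequality, apply a Sobolev--Poincar\'e inequality to the zero extension of $u-\tilde\phi$ (using the Lipschitz boundary for the required measure density), and conclude via Gehring's lemma. The only cosmetic difference is that the paper organizes the argument around an explicit trichotomy on the position of $B_{3R/2}$ relative to $\Omega$ and phrases the reverse H\"older inequality first for $G(x,|Du-D\phi|)$ before passing to $G(x,|Du|)$, whereas you treat the boundary case directly; neither change is substantive.
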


\begin{proof}
Extend $\phi$ to a $W^{1,\infty}(\mathbb{R}^n)$ function with $\|\phi\|_{W^{1,\infty}(\mathbb{R}^n)} \leq M_{\phi}$ and define additionally $u = \phi$ on $\mathbb{R}^n \setminus \overline{\Omega}$.
Let $x_0 \in \partial \Omega$ and consider the ball $B(x_0, 2R_1)$. By Proposition \ref{prop1.1}(2), $u \in C(\overline{\Omega})$. We choose $R_1$ sufficiently small so that 
\begin{equation}\label{eq4.2c}
\int_{B(x_0, 2R_1)} G(x,|D\phi|)  \mathrm{d}x \leq 1,
\end{equation}
\begin{equation}\label{eq4.2}
\int_{B(x_0, 2R_1)} G(x,|Du|)  \mathrm{d}x \leq 1,
\end{equation}
and
\begin{equation}\label{eq4.3}
|u(x)-\phi(x)| \leq \frac{\lambda_1}{4 \Lambda_1}, \quad \forall x \in B(x_0, 2R_1). 
\end{equation}

We now show that there exist positive constants $R_0 \leq R_1$, $\varepsilon \in (0, 1)$, and $c_0$ such that
\begin{equation}\label{eq4.4}
\fint_{B_R} G(x,|Du-D\phi|)  \mathrm{d}x \leq c_0 + c_0 \left( \fint_{B_{2R}} G(x,|Du-D\phi|)^{\frac{1}{1+\varepsilon}}  \mathrm{d}x \right)^{1+\varepsilon}, 
\end{equation}
for all balls $B(x, R) =: B_R \subset B_{2R} \subset B(x_0, 2R_1)$ with $R \leq R_0$.

We consider three cases based on the position of $B_{\frac{3}{2}R}$ relative to $\Omega$

\begin{enumerate}
    \item[(1)] If $B_{\frac{3}{2}R} \subset B(x_0, 2R_1) \cap \Omega$, then proceeding as in \eqref{4.38} and using 
    \begin{equation}\label{eq4.4c}
\fint_{B_R} G(x,|Du-D\phi|)  \mathrm{d}x \leq \underbrace{c\fint_{B_R} G(x,|Du|)  \mathrm{d}x +c\fint_{B_R} G(x,|D\phi|)  \mathrm{d}x}_{\text{by the convexity of } t\mapsto G(x,t)}
\end{equation}
   we can obtain \eqref{eq4.4}.
    
    \item[(2)] If $B_{\frac{3}{2}R} \subset B(x_0, 2R_1) \setminus \Omega$, then the left-hand side of \eqref{eq4.4} vanishes, and thus \eqref{eq4.4} holds trivially.
    
    \item[(3)] If $B_{\frac{3}{2}R} \cap \partial \Omega \neq \emptyset$, then take $\xi \in C_0^\infty(B_{2R})$ as in the proof of Lemma \ref{Lem1.1} and consider the test function $\varphi = \xi^{g^+}(u-\phi)$ in \eqref{BDP}. Using arguments similar to those in Lemma \ref{Lem1.1}, together with \eqref{eq4.3}, the Sobolev-Poincar\'e type inequality for $u-\phi$, and the Lipschitz regularity of $\partial \Omega$, we obtain \eqref{eq4.4}.
\end{enumerate}

By the Gehring lemma, inequality \eqref{eq4.4} implies the existence of $\delta_0 > 0$ such that $|Du-D\phi| \in L^{G(x,t)^{(1+\delta_0)}}_{\text{loc}} \left(B(x_0, 2R_1)\right)$ and
\begin{equation}\label{eq4.5}
\left( \fint_{B_R} G(x,|Du-D\phi|)^{(1+\delta)}  \mathrm{d}x \right)^{\frac{1}{1+\delta}} \leq c \left( 1 + \fint_{B_{2R}} G(x,|Du-D\phi|)  \mathrm{d}x \right)
\end{equation}
for all $B_R \subset B_{2R} \subset B(x_0, 2R_1)$ with $R \leq R_0$ and $\delta \in (0, \delta_0)$. 

Now, let $x \in B(x_0, R_1) \cap \Omega$, $R \leq R_0$, with $B(x, 2R) \subset B(x_0, 2R_1)$, and $\delta \in (0, \delta_0)$. Note that there exists a constant $\sigma \in (0, 1)$ such that
\[
\sigma |B(x, R)| \leq |\Omega(x, R)| \leq |B(x, R)|.
\]
From this and \eqref{eq4.5}, we deduce \eqref{eq4.1}. Finally, by the compactness of $\overline{\Omega}$, we conclude that Lemma \ref{Lem2.1} holds.
\end{proof}
\begin{remark}
It is easy to see from the proof of Lemma \ref{Lem2.1} that the assumption $\phi \in W^{1,\infty}(\Omega)$ in Lemma \ref{Lem2.1} can be relaxed by $\phi \in W^{1,G(x,t)^{(1+\delta_1)}} (\Omega) \cap C^{0,\gamma_3}(\partial \Omega)$.
\end{remark}

Now, suppose that the assumptions of Theorem \ref{Thm1.2} are satisfied and let $u \in W^{1,G(x,t)} (\Omega)$ be a bounded generalized solution of \eqref{PD} satisfying \eqref{M}. Let $\phi \in C^{1,\gamma_3}(\Omega)$ and $\|\phi\|_{C^{1,\gamma_3}(\Omega)} \leq M_{\phi}$. By Proposition \ref{prop1.1}(2), we have $u \in C^{0,\alpha_1}(\overline{\Omega})$, while Theorem \ref{Thm1.1} yields $u \in C^{1,\alpha}_{\text{loc}}(\Omega)$. Moreover, Lemma \ref{Lem2.1} implies that $|Du| \in L^{G(x,t)^{(1+\delta_0)}} (\Omega)$. 

To establish the boundary $C^{1,\alpha}$ regularity, we introduce the following notation
\[
\Omega_R := \Omega(x, R) := B(x, R) \cap \Omega \quad \text{and} \quad \overline{\Omega}_R := \overline{\Omega}(x, R) := B(x, R) \cap \overline{\Omega}.
\]

Let $x_0 \in \partial \Omega$ and choose $R_1 \in (0, R_0]$ sufficiently small so that
\begin{equation}\label{eq4.8}
\int_{\Omega(x_0, 2R_1)} G(x, |Du|)  \mathrm{d}x \leq 1,
\end{equation}
and
\begin{equation}\label{eq4.9}
\left(1+G_{\Omega(x_0, 2R_1)}^+(t)\right)^{1 + \frac{\delta_0}{2}} \leq \left(1+G_{\Omega(x_0, 2R_1)}^-(t)\right)^{1 + \delta_0} \quad \text{for all } t \geq 0.
\end{equation}
From these conditions, it follows that 
\[ 
|Du| \in L^{G_{\Omega(x_0, 2R_1)}^+(t)^{1 + \frac{\delta_0}{2}}} \left(\Omega(x_0, 2R_1)\right),
\]
where $R_0$ and $\delta_0$ are the constants from Lemma \ref{Lem2.1}.

Now, let $x_c \in \overline{\Omega}(x_0, 2R_1)$ and consider $\Omega(x_c, R) =: \Omega_R \subset \Omega_{2R} \subset \Omega(x_0, 2R_1)$. For \( t \geq 0 \), we define
\[
G_{\Omega_{2R}}^*(t) := G_{\Omega_{2R}}^+(t),
\]
and let \( x_* \in \overline{\Omega_{2R}} \) be a point such that \( G_{\Omega_{2R}}^*(t) = G(x_*, t) \).

We define the "frozen" operator \( \overline{A}(\eta) = A(x_*, u(x_*), \eta) \) and consider the associated boundary value problem:
\begin{equation}\label{FP1}
\begin{cases}
- \operatorname{div} \overline{A}(Dv) = 0 & \text{in } \Omega_R, \\
v = u & \text{on } \partial \Omega_R.
\end{cases} \tag{$\mathcal{FP}_2$}
\end{equation}

The following lemma, which plays a crucial role in our analysis, is adapted from Lieberman \cite[Lemmas 5.1 and 5.2]{Li1991} and \cite[Theorems 1.5 and 1.6]{Li1993}.

\begin{lemma}\label{Lem2.21}
There exists a unique solution \( v \in C^{1,\lambda_3}_{\text{loc}} (\overline{\Omega}_{R/2}) \) to problem \eqref{FP1} that satisfies the following estimates:

\begin{equation}\label{eq4.10}
\sup_{x \in B_{R/2}} G\left(x_*, |Dv(x)|\right) \leq c_1 R^{-n} \int_{B_R} G\left(x_*, |Dv|\right) \mathrm{d}x+G(x_*,M_{\phi}),
\end{equation}

\begin{equation}\label{eq4.11}
\fint_{\Omega_{\rho}} G\left(x_*, |Dv(x) - \lbrace Dv\rbrace_\rho|\right) \mathrm{d}x \leq c_2 \left( \frac{\rho}{R} \right)^{\lambda_3} \left(\fint_{\Omega_{R/2}} G\left(x_*, |Dv(x) - \lbrace Dv\rbrace_R|\right) \mathrm{d}x+G(x_*,M_{\phi}R^{\gamma_3})\right), \quad \forall \rho \in (0, R/2),
\end{equation}

\begin{equation}\label{eq4.12}
\int_{\Omega_R} G\left(x_*, |Dv(x)|\right) \mathrm{d}x \leq c_3 \int_{\Omega_R} \left( 1 + G\left(x_*, |Du(x)|\right) \right) \mathrm{d}x,
\end{equation}

\begin{equation}\label{eq4.13}
\sup_{\Omega_R} |u - v| \leq \operatorname{osc}_{\Omega_R} u,
\end{equation}

where \( c_1, c_2, c_3 \) are positive constants depending on \( n,g^-, g^+, \Lambda_0,\lambda_0, M, F,\gamma_3 \), and \( \lambda_3 \in (0,1) \).
\end{lemma}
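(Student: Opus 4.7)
The plan is to mirror the interior argument of Lemma \ref{Lem1.2}, adapting each step to the boundary setting $\Omega_R$, and then to invoke Lieberman's boundary $C^{1,\lambda_3}$ regularity theory for frozen Orlicz-type operators with $C^{1,\gamma_3}$ Dirichlet data. First, existence and uniqueness of $v$ in the affine subspace $u+W_0^{1,G(x_*,t)}(\Omega_R)$ will be obtained by minimizing the strictly convex coercive functional $w\mapsto\int_{\Omega_R}\bar G(Dw)\,\mathrm{d}x$, where $\bar G$ is any Orlicz potential for $\bar A$ (which exists up to multiplicative constants because $\bar A$ is strictly monotone and has $G(x_*,\cdot)$-growth by Proposition \ref{prop1.112} and Proposition \ref{Prop2.2}); uniqueness follows directly from the strict monotonicity inequality \eqref{SIM} applied to two candidate solutions.

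Next, estimate \eqref{eq4.13} is the simplest: since $u-v\in W_0^{1,G(x_*,t)}(\Omega_R)$, the standard weak maximum principle applied to the frozen monotone operator $\bar A$ forces $v$ to take its extremal values on $\partial\Omega_R$, so $\min_{\partial\Omega_R}u\le v\le \max_{\partial\Omega_R}u$, which immediately yields $\sup_{\Omega_R}|u-v|\le\operatorname{osc}_{\Omega_R}u$. For the energy estimate \eqref{eq4.12}, I would test \eqref{FP1} against $v-u$ (which vanishes on $\partial\Omega_R$), use $\bar A(Dv)\cdot Dv\gtrsim G(x_*,|Dv|)$ from \eqref{eqc2}, bound $\bar A(Dv)\cdot Du$ by Young's inequality \eqref{Yi} with $\widetilde G(x_*,\cdot)$ applied to $|\bar A(Dv)|\le \Lambda_1 g(x_*,|Dv|)$ (using \eqref{eqc1} and \eqref{L1}), and then absorb the $G(x_*,|Dv|)$ term into the left-hand side, leaving $\int_{\Omega_R}G(x_*,|Dv|)\,\mathrm{d}x\lesssim\int_{\Omega_R}G(x_*,|Du|)\,\mathrm{d}x$, with the additive $1$ appearing after accounting for the lower-order constants arising from \eqref{GG1}.

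The gradient bound \eqref{eq4.10} and the Campanato decay \eqref{eq4.11} are the substantive parts, and they follow from Lieberman's boundary regularity for equations with Orlicz-type structure and $C^{1,\gamma_3}$ boundary data; compare \cite[Lemmas 5.1--5.2]{Li1991} in the interior with the boundary extension \cite[Theorems 1.5--1.6]{Li1993}. The strategy is to straighten $\partial\Omega\cap B_R$ by a $C^{1,\gamma_3}$ diffeomorphism $\Psi$, so that the transformed equation has the same structural bounds on $\bar A$ up to constants depending on $\|\Psi\|_{C^{1,\gamma_3}}$, and the transformed boundary datum is still $C^{1,\gamma_3}$ with norm controlled by $M_\phi$. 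Then the Lieberman-type local $C^{1,\lambda_3}$ estimate on the flat half-ball produces \eqref{eq4.10}, with the $G(x_*,M_\phi)$ term accounting for the $C^1$ norm of the boundary trace contribution. For the Morrey-type decay \eqref{eq4.11}, the key observation is that after subtracting the affine function $\ell(x)=\phi(x_c)+D\phi(x_c)\cdot(x-x_c)$ from $v$, the difference satisfies a frozen equation with zero boundary datum up to an $O(R^{\gamma_3})$ perturbation; applying the boundary $C^{1,\lambda_3}$ excess decay to $v-\ell$ and translating back to averages produces the stated scaling, with the additive term $G(x_*,M_\phi R^{\gamma_3})$ quantifying exactly the size of the Hölder correction at scale $R$.

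The hard part will be assembling estimate \eqref{eq4.11} with the precise additive correction $G(x_*, M_\phi R^{\gamma_3})$: in the classical $p$-Laplace setting this is standard, but here one has to track how the Musielak--Orlicz structural constants propagate through the boundary flattening and how $\gamma_3$-Hölder regularity of $D\phi$ interacts with the $G(x_*,\cdot)$ scaling (using Proposition \ref{zoo}(1) to convert $R^{\gamma_3}$-dilations to $G$-modular bounds). Once the frozen excess-decay for $v-\ell$ on the flat half-ball is established, the transfer back to $\Omega_R$ via $\Psi$ and the inclusion $\Omega_\rho\subset\Omega_{R/2}$ are routine manipulations; the dependence of $c_2,\lambda_3$ on $n,g^-,g^+,\lambda_0,\Lambda_0,M,F,\gamma_3$ then comes directly from the frozen ellipticity constants and the $C^{1,\gamma_3}$-norm of the boundary chart.
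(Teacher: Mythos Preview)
Your proposal is correct and aligns with the paper's treatment: the paper does not give a proof of this lemma at all, but simply states it as ``adapted from Lieberman \cite[Lemmas 5.1 and 5.2]{Li1991} and \cite[Theorems 1.5 and 1.6]{Li1993},'' which are precisely the references you invoke for \eqref{eq4.10}--\eqref{eq4.11}. Your sketch for \eqref{eq4.12} and \eqref{eq4.13} (energy comparison via testing with $v-u$, and the maximum principle) is the standard argument and is exactly what underlies the analogous interior estimates \eqref{eq3.12}--\eqref{eq3.13} cited from the same sources.
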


Analogously to Lemmas \ref{Lem1.3} and \ref{Lem1.4}, we now state the following Lemmas \ref{Lem2.2} and \ref{Lem2.3}. Their proofs follow similar arguments and are therefore omitted.
\begin{lemma}\label{Lem2.2}
Let \( v \) be the unique solution of problem \eqref{FP1} as given in Lemma \ref{Lem2.21}. Then there exists a positive constant \( c \), depending only on \( n,g^-, g^+, \Lambda_0,\lambda_0, M,F \), and \( M_\phi \), such that
\begin{equation}\label{eq4.14}
\int_{\Omega_R} G\left(x_*,|Du(x) - Dv(x)|\right) \mathrm{d}x \leq cR^{\beta/2} \int_{\Omega_{2R}} \left( 1 + G\left(x, |Du(x)|\right) \right) \mathrm{d}x.
\end{equation}
\end{lemma}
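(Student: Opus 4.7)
The plan is to mimic the proof of Lemma~\ref{Lem1.3} almost verbatim, with the interior ball $B_R$ replaced by the intersection $\Omega_R=B(x_c,R)\cap\Omega$ and the higher integrability from Lemma~\ref{Lem1.1} replaced by its boundary version, Lemma~\ref{Lem2.1}. The starting point is the quantity
\[
I:=\int_{\Omega_R}\bigl(\overline{A}(Du)-\overline{A}(Dv)\bigr)\cdot(Du-Dv)\,\mathrm{d}x,
\]
which, by Proposition~\ref{Prop2.2}, bounds $c_0\int_{\Omega_R}G(x_*,|Du-Dv|)\,\mathrm{d}x$ from below. The key admissibility remark is that $v=u$ on the whole boundary $\partial\Omega_R$, so $u-v$ extended by zero outside $\Omega_R$ belongs to $W_0^{1,G(x,t)}(\Omega)\cap L^{\infty}(\Omega)$; consequently it is a legitimate test function in both the weak formulation \eqref{FDV} for $u$ and the frozen equation \eqref{FP1} for $v$. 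Using the latter I would write $I=I_1+I_2$ with
\[
I_1:=\int_{\Omega_R}\bigl(\overline{A}(Du)-A(x,u,Du)\bigr)\cdot(Du-Dv)\,\mathrm{d}x,\qquad
I_2:=\int_{\Omega_R}A(x,u,Du)\cdot(Du-Dv)\,\mathrm{d}x.
\]

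For $I_1$ I would copy the chain of inequalities \eqref{23} line by line: apply the Hölder-type continuity \eqref{eq2.10} to the difference $A(x_*,u(x_*),Du)-A(x,u,Du)$, use that $u\in C^{0,\alpha_1}(\overline{\Omega})$ (Proposition~\ref{prop1.1}(2)) to produce the factor $R^{\beta_1}+R^{\alpha_1\beta_2}\lesssim R^\beta$ with $\beta=\min\{\beta_1,\alpha_1\beta_2\}$, Young's inequality~\eqref{Yi} together with~\eqref{L1} to dispose of $|Dv|$ via the a~priori bound~\eqref{eq4.12}, and finally the smallness~\eqref{eq4.8} and the higher-integrability gain from Lemma~\ref{Lem2.1} (applied on the ball $\Omega_{2R}\subset\Omega(x_0,2R_1)$) combined with~\eqref{eq4.9} to pass from $G(x,\cdot)^{1+\delta_0/2}$ to $G(x,\cdot)^{1+\delta_0}$. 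This yields
\[
I_1\le c R^{\beta}\int_{\Omega_{2R}}\bigl(1+G(x,|Du|)\bigr)\,\mathrm{d}x.
\]

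For $I_2$ the equation \eqref{FDV} for $u$ together with the test function $u-v$ (extended by zero) gives
\[
I_2=\int_{\Omega_R}B(x,u,Du)(u-v)\,\mathrm{d}x,
\]
and then the growth condition~\eqref{6.51}, the boundedness \eqref{M}, the sup bound \eqref{eq4.13}, and the Hölder continuity $\operatorname*{osc}_{\Omega_R}u\le c R^{\alpha_1}$ yield the same estimate $I_2\le cR^{\beta}\int_{\Omega_{2R}}(1+G(x,|Du|))\,\mathrm{d}x$. Combining $I_1$ and $I_2$ with Proposition~\ref{Prop2.2} delivers~\eqref{eq4.14} with the factor $R^{\beta/2}$ (the loss from $R^\beta$ to $R^{\beta/2}$ is harmless and left for convenience, exactly as in Lemma~\ref{Lem1.3}).

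The only substantive difference with the interior case — and the place where I expect the main technical obstacle — is the step that requires the boundary Hölder regularity $u\in C^{0,\alpha_1}(\overline{\Omega})$ and the boundary Gehring-type estimate Lemma~\ref{Lem2.1} to hold uniformly up to $\partial\Omega$; these are what allow the bound $\operatorname*{osc}_{\Omega_R}u\lesssim R^{\alpha_1}$ and the passage from the $(1+\delta_0/2)$-integral of $G(x_*,|Du|)$ to the $(1+\delta_0)$-integral of $G(x,|Du|)$ on $\Omega_{2R}$ even when $\Omega_{2R}$ meets $\partial\Omega$. Once these two ingredients are in place, the rest of the argument is a routine transcription of \eqref{23}.
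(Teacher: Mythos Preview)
Your proposal is correct and is precisely the approach the paper has in mind: the paper itself omits the proof of Lemma~\ref{Lem2.2}, stating only that it follows by arguments analogous to Lemma~\ref{Lem1.3}, and your write-up is exactly that analogy, with the interior inputs (Proposition~\ref{prop1.1}(1), Lemma~\ref{Lem1.1}, Lemma~\ref{Lem1.2}, \eqref{eq3.8}--\eqref{eq3.9}) systematically swapped for their boundary counterparts (Proposition~\ref{prop1.1}(2), Lemma~\ref{Lem2.1}, Lemma~\ref{Lem2.21}, \eqref{eq4.8}--\eqref{eq4.9}). Your identification of the two substantive boundary ingredients---global $C^{0,\alpha_1}$ regularity and the up-to-the-boundary Gehring estimate---is on point and accounts for the dependence of the constant on $M_\phi$.
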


\begin{lemma}\label{Lem2.3}
Let \( \Omega(x_0, 2R_1) \) be as defined above. Then, for any \( \tau \in (0, n) \), there exist positive constants \( R_\tau < \frac{R_1}{16} \) and \( c_\tau \), depending only on  \( n,g^-, g^+, \Lambda_0,\lambda_0, M,F,M_\phi \), and \( \tau \), such that
\begin{equation}\label{eq4.17}
\int_{\Omega(x_c, \rho)} G\left(x_*^\rho,|Du|\right) \mathrm{d}x \leq c_\tau \rho^{n-\tau}, \quad \forall x_c \in \overline{\Omega}\left(x_0, \frac{R_1}{2}\right), \quad \forall \rho \in (0, R_\tau),
\end{equation}
where \( G\left(x_*^\rho,t\right) = \sup_{x \in \Omega(x_c, 2\rho)} G(x,t) \) for all \( t \geq 0 \).
\end{lemma}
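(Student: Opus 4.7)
The proof follows the blueprint of Lemma~\ref{Lem1.4}, with the open balls replaced by the intersections $\Omega(x_c,\rho)$ and the frozen interior problem \eqref{FP} replaced by the frozen boundary problem \eqref{FP1}, whose $C^{1,\lambda_3}$ theory is supplied by Lemma~\ref{Lem2.21}. My plan is to freeze at a point $x_*\in\overline{\Omega_{2R}}$ achieving $G_{\Omega_{2R}}^+=G(x_*,\cdot)$, decompose $Du=(Du-Dv)+Dv$, and combine the comparison estimate of Lemma~\ref{Lem2.2} with the sup and Caccioppoli-type estimates of Lemma~\ref{Lem2.21} to obtain a recursive Morrey-type inequality in the variable $R$, then conclude by the iteration lemma~\cite[Lemma~3.2]{Acerbi2007}.

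Concretely, for $x_c\in\overline{\Omega}(x_0,R_1/2)$ and $\rho<R_1/16$, I would choose $R$ with $\Omega(x_c,\rho)=\Omega_\rho\subset\Omega_{R/2}\subset\Omega_{8R}\subset\Omega(x_0,2R_1)$ and let $v$ be the unique solution to \eqref{FP1}. Since $\Omega(x_c,2\rho)\subset\Omega_{2R}$, we have $G(x_*^\rho,\cdot)\leq G(x_*,\cdot)$. Using the convexity bound from Proposition~\ref{zoo} together with $G(x,0)=0$,
\begin{align*}
\int_{\Omega_\rho} G(x_*^\rho,|Du|)\,\mathrm{d}x
&\leq 2^{g^+-1}\Bigl[\int_{\Omega_\rho} G(x_*,|Du-Dv|)\,\mathrm{d}x + \int_{\Omega_\rho} G(x_*,|Dv|)\,\mathrm{d}x\Bigr].
\end{align*}
The first term is controlled by Lemma~\ref{Lem2.2} through $cR^{\beta/2}\int_{\Omega_{2R}}(1+G(x,|Du|))\,\mathrm{d}x$; the second one is estimated by the pointwise bound~\eqref{eq4.10} of Lemma~\ref{Lem2.21} multiplied by $|\Omega_\rho|\leq\omega_n\rho^n$, followed by the Caccioppoli estimate~\eqref{eq4.12} to pass from $\int_{\Omega_R}G(x_*,|Dv|)$ to $\int_{\Omega_R}(1+G(x_*,|Du|))$. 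Absorbing the harmless term $\rho^n G(x_*,M_\phi)$, which is uniformly bounded thanks to \eqref{GG1} and $\phi\in C^{1,\beta_3}$, I arrive at the recursive inequality
\begin{equation*}
\int_{\Omega_\rho} G(x_*^\rho,|Du|)\,\mathrm{d}x
\leq c_{20}\Bigl[R^n + \Bigl(R^{\beta/2}+(\rho/R)^n\Bigr)\int_{\Omega_{2R}} G(x_*,|Du|)\,\mathrm{d}x\Bigr],
\end{equation*}
which has exactly the structure exploited in \eqref{eq3.18}.

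The final step is to invoke \cite[Lemma~3.2]{Acerbi2007}, which delivers $R_\tau<R_1/16$ and $c_\tau$ depending only on the listed parameters such that \eqref{eq4.17} holds. The main technical point that requires a little attention is the measure-density of $\Omega$ at boundary points: since $\partial\Omega$ is of class $C^{1,\beta_3}$, condition~\eqref{AO} is satisfied, and thus $|\Omega(x_c,R)|\simeq R^n$ uniformly, so the Acerbi--Mingione iteration applies on half-balls with constants independent of $x_c$. The Dirichlet datum enters only through $M_\phi$ in the estimates of Lemma~\ref{Lem2.21}, and is absorbed by the $R^n$ term; hence the resulting constants depend on $n,g^-,g^+,\lambda_0,\Lambda_0,M,F,M_\phi,\tau$, as claimed.
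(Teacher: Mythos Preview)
Your proposal is correct and follows exactly the approach the paper intends: the paper omits the proof of Lemma~\ref{Lem2.3}, stating only that it is analogous to that of Lemma~\ref{Lem1.4}, and your argument is precisely that analogue, with the frozen interior problem~\eqref{FP} replaced by~\eqref{FP1} and the estimates of Lemma~\ref{Lem1.2} replaced by those of Lemma~\ref{Lem2.21}. You even supply two details the paper leaves implicit, namely the handling of the additional $\rho^n G(x_*,M_\phi)$ term arising from~\eqref{eq4.10} and the uniform measure-density $|\Omega(x_c,R)|\simeq R^n$ needed for the iteration at boundary points.
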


\begin{proof}[Proof of Theorem \ref{Thm1.2}]
Let \( \Omega(x_0, 2R_1) \), \( \beta \), and \( \lambda_3 \) be as defined above. Following the approach in the proof of Theorem \ref{Thm1.1}, we set
\[
\tau = \frac{\beta \lambda_3}{4(n+\lambda_3)} \quad \text{and} \quad \theta = \frac{\beta}{2(n+\lambda_3)}.
\]
Let \( x_c \in \Omega\left(x_0, \frac{R_1}{4}\right) \) and \( \rho < \left(\frac{R_\tau}{4}\right)^{1+\theta} \), where \( R_\tau \) is the constant from Lemma \ref{Lem2.3}. Define \( R = (2\rho)^{\frac{1}{1+\theta}} \). This choice ensures that
\[
2\rho < R < 2R < R_\tau \quad \text{and} \quad \Omega(x_c, \rho) =: \Omega_\rho \subset \Omega_{R/2} \subset \Omega_{32R} \subset \Omega(x_0, 2R_1).
\]
Let \( v \) be the unique solution of problem \eqref{FP1}. Using estimates \eqref{eq4.10}--\eqref{eq4.14} and employing the same reasoning as in the proof of Theorem \ref{Thm1.1}, we obtain
\[
\int_{\Omega_\rho} |Du - (Du)_\rho|^{g^-} \mathrm{d}x \leq c \rho^{n + \varepsilon g^- / g^+},
\]
where \( \varepsilon = \frac{\beta \lambda_3}{4(n+\lambda_3)(1+\theta)} \). This implies that \( u \in C^{1,\alpha}(\overline{\Omega}(x_0, \frac{R_1}{8})) \) with \( \alpha = \frac{\varepsilon}{g^+} \), which completes the proof of Theorem \ref{Thm1.2}.
\end{proof}
\subsection{Proof of Theorem \ref{Thm1.3}}
To establish Theorem \ref{Thm1.3}, it suffices to prove boundary $C^{1,\alpha}$ regularity for the bounded generalized solution $u$ of problem \eqref{PN}. Specifically, we will show that $Du$ is H\"older continuous in a neighborhood of each boundary point $x_0 \in \partial \Omega$.

For the Neumann boundary conditions considered in Theorem \ref{Thm1.3}, cubes prove more suitable than balls for our analysis. We therefore adopt the following notation. Let $Q(x_0, R)$ denote the cube $\{x = (x^1, x^2, \ldots, x^n) \in \mathbb{R}^n: |x^i - x_0^i| < R,\ \forall i\}$. We identify $\mathbb{R}^n = \mathbb{R}^{n-1} \times \mathbb{R}$ and define the upper half-space $\mathbb{R}^n_+ = \left\{(x', x'') \in \mathbb{R}^{n-1} \times \mathbb{R}: x^n > 0\right\}$, the boundary hyperplane $\mathbb{R}_0^n = \mathbb{R}^{n-1} \times \{0\}$, and the lower half-space $\mathbb{R}^n_- = \pi(\mathbb{R}^n_+)$, where $\pi : \mathbb{R}^n \rightarrow \mathbb{R}^n$ is the reflection map defined by $\pi(x', x'') = (x', -x'')$. For any cube $Q(x_0, R)$, we define $Q^+(x_0, R) = Q(x_0, R) \cap \mathbb{R}^n_+$, $Q^-(x_0, R) = Q(x_0, R) \cap \mathbb{R}^n_-$, and $Q^0(x_0, R) = Q(x_0, R) \cap \mathbb{R}_0^n$.

We begin by establishing a higher integrability result for the Neumann boundary value problem \eqref{PN}. To the best of our knowledge, higher integrability for Neumann problems with variable exponents has not been previously addressed in the literature. We will adapt the technique developed by Xianling Fan \cite{Fan2007} to establish this crucial result in our setting.

\begin{lemma}\label{Lem3.1}
Assume the conditions of Theorem \ref{Thm1.3} hold. Let $u \in W^{1,G(x,t)}(\Omega)$ be a bounded generalized solution of the Neumann problem \eqref{PN} that satisfies \eqref{M}, and suppose that 
\[
\sup |C(\partial \Omega \times [-M, M])| \leq \widetilde{M}.
\]
Then there exist positive constants $R_0$, $c_0$, and $\delta_0$, depending only on $g^-, g^+, n, \Lambda_1,\lambda_1, M, L_1$, and $\widetilde{M}$, such that $|Du| \in L^{G(x,t)^{(1+\delta_0)}}(\Omega)$ and for every $x \in \Omega$, $R \in (0, R_0)$, and $\delta \in (0, \delta_0]$, the following inequality holds
\begin{equation}\label{eq5.1}
\left( \fint_{Q(x, R) \cap \Omega} G(x,|Du|)^{(1+\delta)}  \mathrm{d}x \right)^{\frac{1}{1+\delta}} \leq c_0 \left( 1 + \fint_{Q(x, 2R) \cap \Omega} G(x,|Du|)  \mathrm{d}x \right). 
\end{equation}
\end{lemma}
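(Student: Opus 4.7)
The plan is to mirror the strategy employed in the Dirichlet case of Lemma \ref{Lem2.1}, adapting each step to accommodate the additional boundary contribution that the Neumann condition introduces into the weak formulation \eqref{FNV}. The target is to establish a reverse-H\"older-type inequality
\begin{equation*}
\fint_{Q(y,R) \cap \Omega} G(x,|Du|) \, \mathrm{d}x \leq c + c \left( \fint_{Q(y,2R) \cap \Omega} G(x,|Du|)^{1/(1+\varepsilon)} \, \mathrm{d}x \right)^{1+\varepsilon}
\end{equation*}
for cubes of sufficiently small radius. Once such an inequality is at hand, Gehring's lemma (see \cite[Chapter 5, Proposition 1.1]{11}) produces the higher-integrability exponent $\delta_0 > 0$ and inequality \eqref{eq5.1} follows after a covering of $\overline{\Omega}$ by finitely many localizing charts.

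First I would use the $C^{1,\beta_3}$ regularity of $\partial\Omega$ to flatten the boundary locally: for each $x_0 \in \partial\Omega$ there exist a radius $R_1$ and a $C^{1,\beta_3}$ diffeomorphism $\Psi: Q(x_0,4R_1) \to Q(0,4R_1)$ sending $\Omega \cap Q(x_0,4R_1)$ onto $Q^+(0,4R_1)$ and $\partial\Omega \cap Q(x_0,4R_1)$ onto $Q^0(0,4R_1)$. Under this change of variables, the structural conditions \eqref{6.31}--\eqref{7.41} and \eqref{D22}--\eqref{GG3} are preserved with comparable constants, and the Neumann condition becomes $\widetilde{A}(x,u,Du)\cdot e_n = \widetilde{C}(x,u)$ on $Q^0$. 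I would then shrink $R_1$ further so that $\int_{Q(x_0,2R_1)\cap\Omega} G(x,|Du|)\,\mathrm{d}x \leq 1$ and, by Proposition \ref{prop1.1}(3), $\mathrm{osc}_{Q(x_0,2R_1)\cap\Omega} u \leq \lambda_1/(4\Lambda_1)$; this smallness will be decisive in the absorption step.

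Next, for a point $y$ and radius $R$ with $Q(y,2R) \subset Q(x_0,R_1)$, I would split into three standard cases. If $Q(y,3R/2)\subset\Omega$, the interior Caccioppoli inequality from Lemma \ref{Lem1.1} applies directly. If $Q(y,3R/2)\cap\Omega=\emptyset$, the inequality is trivial. The essential case is when $Q(y,3R/2)$ meets the flattened boundary. In this case I would use a cutoff $\xi\in C_0^\infty(Q(y,2R))$ with $\xi\equiv 1$ on $Q(y,R)$ and $|D\xi|\leq C/R$, set $\omega=\fint_{Q(y,2R)\cap\Omega} u\, \mathrm{d}x$, and test \eqref{FNV} against $v = \xi^{g^+}(u-\omega)$. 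The interior contribution is handled exactly as in \eqref{4.31}--\eqref{3.2m1}, producing terms of type $G(x,|Du|)\xi^{g^+}$ that can be absorbed on the left and a remainder of the form $G\!\left(x,(u-\omega)/R\right)$ on the right. The new ingredient is the boundary integral
\begin{equation*}
\int_{\partial\Omega \cap Q(y,2R)} C(x,u)\, \xi^{g^+}(u-\omega)\, \mathrm{d}s,
\end{equation*}
which, using $|C(x,u)|\leq \widetilde{M}$ on $\partial\Omega\cap\{|u|\leq M\}$, is controlled by $\widetilde{M}\int_{Q^0(y,2R)}|u-\omega|\,\mathrm{d}s$.

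To dispose of this boundary term I would invoke a Musielak--Orlicz trace-type inequality in the flattened coordinates, schematically
\begin{equation*}
\int_{Q^0(y,2R)}|u-\omega|\,\mathrm{d}s \leq C\int_{Q^+(y,2R)}\Bigl(\tfrac{|u-\omega|}{R}+|Du|\Bigr)\,\mathrm{d}x,
\end{equation*}
and then split the $|Du|$ piece by Young's inequality \eqref{Yi} into a portion absorbable into the $\lambda_1\int G(x,|Du|)\xi^{g^+}$ term on the left and a portion of the form $G\!\left(x,(u-\omega)/R\right)$ on the right (up to $O(|Q(y,2R)|)$, with the smallness of $|u-\omega|$ ensuring the absorption constant is benign). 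A Sobolev--Poincar\'e inequality for $W^{1,G(x,t)}$ applied to $u-\omega$ on $Q(y,2R)\cap\Omega$ (valid because the flattened half-cube is a bounded Lipschitz domain) then converts $\fint G\!\left(x,(u-\omega)/R\right)$ into $\bigl(\fint G(x,|Du|)^{1/(1+\varepsilon)}\bigr)^{1+\varepsilon}$, completing the reverse-H\"older estimate; Gehring's lemma and a finite covering close the proof. The principal obstacle, relative to Lemma \ref{Lem2.1}, is precisely this Neumann boundary contribution: one cannot simply subtract an extension of the boundary data to kill the trace of the test function, so a careful combination of a Musielak--Orlicz trace inequality with Young's inequality and the Proposition \ref{prop1.1}(3) H\"older bound is required to make the absorption argument go through.
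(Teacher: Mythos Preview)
Your proposal is correct and follows essentially the same route as the paper: flatten the boundary, choose a localized test function $\xi^{g^+}(u-\omega)$, control the new Neumann boundary integral via the bound $|C|\le\widetilde{M}$ together with a trace inequality and Young's inequality, derive a Caccioppoli estimate, upgrade it to a reverse H\"older inequality through Sobolev--Poincar\'e, and close with Gehring's lemma plus a finite cover. The only noteworthy technical variant is that the paper performs an even reflection of $u$ and $A$ across the flattened boundary so as to obtain the reverse H\"older inequality on \emph{full} cubes (and hence apply the standard interior Gehring lemma), whereas you work throughout with the intersections $Q\cap\Omega$; both devices are standard and lead to the same conclusion.
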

\begin{proof}
Let $x_0 \in \partial \Omega$. Via a $C^{1,\beta_1}$ coordinate change, we may assume that $x_0 = 0$, $Q(x_0, 2R_1) \cap \Omega = Q^+(0, 2R_1)$, and $Q(x_0, 2R_1) \cap \partial \Omega = Q^0(0, 2R_1) =: \Sigma$. By Proposition \ref{prop1.1}(3), $u \in C(\overline{\Omega})$. We choose $R_1$ sufficiently small such that 
\begin{equation}\label{eq5.2}
    \int_{Q^+(0, 2R_1)} G(x,|Du|)  \mathrm{d}x \leq 1,
\end{equation}
and
\begin{equation}\label{eq5.3}
|u(x_1)-u(x_2)| \leq \frac{a_1}{4a_4}, \quad \forall x_1,x_2 \in Q^+(0, 2R_1). 
\end{equation}

We extend $u$ and the operator to $Q^-(0, 2R_1)$ by reflection: define $u(x) = u(\pi x)$ and $A(x,u,\eta) = A(\pi x, u, \eta)$ for $x \in Q^-(0, 2R_1)$. Let $x_b \in \Sigma$ and $Q(x_b, R) \subset Q(x_b, 2R) \subset Q(0, 2R_1)$. Denote $\omega = \fint_{Q^+(x_b, 2R)} u  \mathrm{d}x$. Take a cut-off function $\xi \in C_0^\infty(Q(x_b, 2R))$ such that $0 \leq \xi \leq 1$, $\xi = 1$ on $Q(x_b, R)$, and $|D\xi| \leq 4/R$. 

Consider the test function $\varphi = \xi^{g^+}(u-\omega)$ in the weak formulation of \eqref{PN}. Applying the structure conditions \eqref{6.31}--\eqref{7.41}, \eqref{6.61}, inequality \eqref{eq5.3}, and Young's inequality \eqref{Yi}, we estimate the boundary term as follows:
\begin{align*}
\left| \int_{\Sigma} C(x, u) \varphi  \mathrm{d}s \right| 
&\leq \widetilde{M} \int_{Q^0(x_b, 2R)} |\varphi|  ds \\
&\leq c \int_{Q^+(x_b, 2R)} |D\varphi|  \mathrm{d}x \quad \text{(by trace theorem)} \\
&\leq c \int_{Q^+(x_b, 2R)} \left( \xi^{g^+}|Du| + g^+ \xi^{g^+-1}|D\xi||u-\omega| \right) \mathrm{d}x \\
&\leq \frac{a_1}{4} \int_{Q^+(x_b, 2R)} \xi^{g^+}G(x,|Du|)  \mathrm{d}x + c \left| Q^+(x_b, 2R) \right| + c \int_{Q^+(x_b, 2R)} G\left(x, \left| \frac{u-\omega}{R} \right| \right) \mathrm{d}x,
\end{align*}
where the constant $c$ depends only on $g^-, g^+, n,\Lambda_1,\lambda_1, M, F, L_1$, and $\widetilde{M}$.

Using this estimate in the weak formulation, we obtain the Caccioppoli-type inequality:
\begin{equation}\label{eq:caccioppoli}
\int_{Q^+(x_b, R)} G(x,|Du|)  \mathrm{d}x \leq c |Q_{2R}| + c \int_{Q^+(x_b, 2R)} G\left(x, \left| \frac{u - \omega}{R} \right| \right) \mathrm{d}x.
\end{equation}

Next, we apply a Sobolev-Poincaré type inequality in Musielak-Orlicz spaces to the function $u - \omega$. There exists a constant $R_0 < R_1$ such that for $R \leq R_0$,
\[
\int_{Q^+(x_b, 2R)} G\left(x, \left| \frac{u - \omega}{R} \right| \right) \mathrm{d}x \leq c \left( \int_{Q^+(x_b, 2R)} G(x,|Du|)^{\frac{1}{1+\varepsilon}}  \mathrm{d}x \right)^{1+\varepsilon},
\]
for some $\varepsilon \in (0,1)$. Combining this with \eqref{eq:caccioppoli}, we get
\[
\fint_{Q^+(x_b, R)} G(x,|Du|)  \mathrm{d}x \leq c + c \left( \fint_{Q^+(x_b, 2R)} G(x,|Du|)^{\frac{1}{1+\varepsilon}}  \mathrm{d}x \right)^{1+\varepsilon}.
\]
By the reflection, we extend this estimate to the full cube:
\begin{equation}\label{eq5.4}
\fint_{Q(x_b, R)} G(x,|Du|)  \mathrm{d}x \leq c + c \left( \fint_{Q(x_b, 2R)} G(x,|Du|)^{\frac{1}{1+\varepsilon}}  \mathrm{d}x \right)^{1+\varepsilon}. 
\end{equation}

Now, let $x_c \in Q(0, 2R_1)$ and $Q(x_c, R) \subset Q(x_c, 2R) \subset Q(0, 2R_1)$ with $R \leq R_0$. We consider two cases:

\begin{enumerate}
    \item[Case 1:] If $Q(x_c, 2R) \cap \Sigma = \emptyset$, then by interior higher integrability results, \eqref{eq5.4} holds with $x_b$ replaced by $x_c$.

    \item[Case 2:] If $Q(x_c, 2R) \cap \Sigma \neq \emptyset$, then there exists $x_b \in \Sigma$ such that
    \[
    Q(x_c, R) \subset Q(x_b, 3R) \subset Q(x_b, 6R) \subset Q(x_c, 8R).
    \]
    Provided that $Q(x_c, 8R) \subset Q(0, 2R_1)$, we apply \eqref{eq5.4} to $Q(x_b, 3R)$ to obtain
    \[
    \fint_{Q(x_c, R)} G(x,|Du|)  \mathrm{d}x \leq c + c \left( \fint_{Q(x_c, 8R)} G(x,|Du|)^{\frac{1}{1+\varepsilon}}  \mathrm{d}x \right)^{1+\varepsilon}.
    \]
\end{enumerate}

By a finite covering argument (see \cite[Lemma 3.2]{Acerbi2007}), we conclude that inequality \eqref{eq5.4} holds for all $x_b \in Q(0, 2R_1)$ and $Q(x_b, R) \subset Q(x_b, 2R) \subset Q(0, 2R_1)$ with $R \leq R_0$.

Finally, by the Gehring lemma, there exist constants $\delta_0 > 0$ and $c_0 > 0$ such that
\begin{equation}\label{eq5.5}
\left( \fint_{Q(x_b, R)} G(x,|Du|)^{(1+\delta_0)}  \mathrm{d}x \right)^{\frac{1}{1+\delta_0}} \leq c_0 \left( 1 + \fint_{Q(x_b, 2R)} G(x,|Du|)  \mathrm{d}x \right), 
\end{equation}
for all such cubes. The constants $\delta_0$ and $c_0$ depend only on $g^-, g^+, n, a_1, a_4, M, F, L_1$, and $\widetilde{M}$.

In particular, for $x_b \in Q^+(0, 2R_1) \cup \Sigma$, we have
\begin{equation}\label{eq5.6}
\left( \fint_{Q^+(x_b, R)} G(x,|Du|)^{(1+\delta_0)}  \mathrm{d}x \right)^{\frac{1}{1+\delta_0}} \leq c_0 \left( 1 + \fint_{Q^+(x_b, 2R)} G(x,|Du|)  \mathrm{d}x \right). 
\end{equation}

By the compactness of $\overline{\Omega}$, we can cover $\overline{\Omega}$ by a finite number of such cubes and thus obtain the global higher integrability. This completes the proof of Lemma \ref{Lem3.1}.
\end{proof}
Now suppose that the assumptions of Theorem \ref{Thm1.3} are satisfied and $u \in W^{1,G(x,t)}(\Omega)$ is a bounded generalized solution of \eqref{PN} satisfies \eqref{M} with $\sup |C(\partial \Omega \times [-M, M])| \leq \widetilde{M}$. By Proposition \ref{prop1.1}(3), $u \in C^{0,\alpha_1}(\overline{\Omega})$. By Theorem \ref{Thm1.1}, $u \in C^{1,\alpha}_{\text{loc}}(\Omega)$. By Lemma \ref{Lem3.1}, $|Du| \in L^{G(x,t)^{(1+\delta_0)}}(\Omega)$. 

We now prove the boundary $C^{1,\alpha}$ regularity. Let $x_0 = 0 \in \partial \Omega$ and $Q(0, 2R_1)$ be as in the proof of Lemma \ref{Lem3.1}. Let $R_1 \in (0, R_0]$ be sufficiently small such that 
\begin{equation}\label{eq5.7}
\int_{Q^+(0, 2R_1)} G(x,|Du|)  \mathrm{d}x \leq 1,
\end{equation}
and
\begin{equation}\label{eq5.8}
\left(1+G_{Q^+(0, 2R_1)}^+(t)\right)^{1 + \frac{\delta_0}{2}} \leq \left(1+G_{Q^+(0, 2R_1)}^-(t)\right)^{1 + \delta_0} \quad \text{for all } t \geq 0, 
\end{equation}
where $R_0$ and $\delta_0$ are as in Lemma \ref{Lem3.1}.

Let $x_c \in Q^+(0, 2R_1) \cup \Sigma$ and $Q^+(x_c, R) =: Q_R^+ \subset Q_{2R}^+ \subset Q^+(0, 2R_1)$. Denote $G_{Q_{2R}^+}^*(t) = G_{Q_{2R}^+}^+(t)$ and let $x_* \in \overline{Q_{2R}^+}$ be such that $G_{Q_{2R}^+}^*(t) = G(x_*, t)$. Define the frozen operator $\overline{A}(\eta) = A(x_*, u(x_*), \eta)$ and $C_* = C(x_*, u(x_*))$. 

When $\partial Q_R^+ \cap \Sigma \neq \emptyset$, consider the mixed boundary value problem
\begin{equation}\label{mixed-BVP}
\begin{cases}
- \operatorname{div} \overline{A}(Dv) = 0 & \text{in } Q_R^+, \\
-\overline{A}_n(Dv) = C_* & \text{on } \partial Q_R^+ \cap \Sigma, \\
v = u & \text{on } \partial Q_R^+ \setminus \Sigma.
\end{cases} \tag{$\mathcal{FP}_3$}
\end{equation}

By adapting the regularity theory developed by Lieberman \cite{Li1991} and Antonini \cite[Theorem 7.1]{ACAN} to our Musielak-Orlicz setting, we establish the following fundamental results for the frozen problem \eqref{mixed-BVP}.
\begin{lemma}\label{Lem3.2}
Consider the mixed boundary value problem \eqref{mixed-BVP}. Under the assumptions of Theorem \ref{Thm1.3}, there exists a unique solution $v \in C^{1,\lambda_1}(\overline{Q_R^+})$ that satisfies the following estimates
\begin{equation}\label{eq5.9}
\sup_{Q_{R/2}^+} G(x_*, |Dv|) \leq c_1 \left( R^{-n} \int_{Q_R^+} G(x_*, |Dv|)  \mathrm{d}x + c_*(C_*) \right), 
\end{equation}
\begin{equation}\label{eq5.10}
\fint_{Q_\rho^+} G(x_*, |Dv - \lbrace Dv\rbrace_\rho|)  \mathrm{d}x \leq c_2 \left( \frac{\rho}{R} \right)^{\lambda_4} \fint_{Q_R^+} G(x_*, |Dv - \lbrace Dv\rbrace_R|)  \mathrm{d}x, 
\end{equation}
\begin{equation}\label{eq5.11}
\int_{Q_R^+} G(x_*, |Dv|)  \mathrm{d}x \leq c_3 \int_{Q_R^+} \left(1 + G(x_*, |Du|)\right) \mathrm{d}x,
\end{equation}
\begin{equation}\label{eq5.12}
\sup_{Q_R^+} |u - v| \leq \operatorname{osc}_{Q_R^+} u,
\end{equation}
where the positive constants $c_1, c_2, c_3, c_*$ and the exponent $\lambda_4 \in (0,1)$ depend only on $g^-, g^+, n, \Lambda_0,\lambda_0, F$, and $C_*$.
\end{lemma}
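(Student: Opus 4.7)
The plan is to establish existence and uniqueness of the frozen mixed problem by standard monotone operator methods, and then derive the four quantitative estimates by invoking the boundary $C^{1,\lambda}$ regularity theory for equations with Orlicz growth developed by Lieberman \cite{Li1991,Li1993} and adapted to the Musielak--Orlicz framework by Antonini \cite{ACAN}.

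For existence and uniqueness, I would set $\overline{G}(t) := G(x_*,t)$ and introduce the closed subspace
\[
V := \{ w \in W^{1,\overline{G}}(Q_R^+) : w = 0 \text{ on } \partial Q_R^+ \setminus \Sigma \text{ in the trace sense}\}.
\]
The weak formulation of \eqref{mixed-BVP} then reads: find $v \in u + V$ such that
\[
\int_{Q_R^+} \overline{A}(Dv) \cdot D\varphi \, \mathrm{d}x = \int_{\partial Q_R^+ \cap \Sigma} C_* \varphi \, \mathrm{d}s \qquad \forall\, \varphi \in V.
\]
The operator $\overline{A}$ is continuous, strictly monotone by Proposition~\ref{Prop2.2}, and coercive by Proposition~\ref{prop1.112}, while the constant $C_*$ induces a bounded linear functional on $V$ through the trace embedding implicit in Proposition~\ref{embb}. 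Applying Browder--Minty in the reflexive space $W^{1,\overline{G}}(Q_R^+)$ (cf.\ Proposition~\ref{AB} and Remark~\ref{Ref}) then delivers the unique solution $v$.

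For the estimates, since the frozen operator is $x$-independent, I would invoke the interior-plus-conormal-boundary $C^{1,\lambda}$ theory of Lieberman \cite[Lemmas~5.1, 5.2]{Li1991} together with its Orlicz-type extension in Antonini \cite[Theorem~7.1]{ACAN}. Applied on $Q_R^+$ with the conormal datum $C_*$ on $\Sigma$ and continuous Dirichlet data $u$ on the remainder of $\partial Q_R^+$, these results yield the $L^\infty$ gradient bound \eqref{eq5.9} and the Campanato-type decay \eqref{eq5.10}, with constants and exponent $\lambda_4 \in (0,1)$ depending only on $n, g^\pm, \lambda_0, \Lambda_0, F$ and $|C_*|$. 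The energy estimate \eqref{eq5.11} will be obtained by choosing $\varphi := v - u \in V$ in the weak formulation, then using \eqref{eqc2} to bound the left-hand side below, Young's inequality \eqref{Yi} to absorb the cross term $\overline{A}(Dv)\cdot Du$, and a trace inequality to control the boundary integral $\int_\Sigma C_*(v-u)\,\mathrm{d}s$. The supremum estimate \eqref{eq5.12} will follow from a weak maximum principle for the frozen operator with conormal datum: testing with $(v - \max_{Q_R^+} u)^+$ and $-(v - \min_{Q_R^+} u)^-$ (both in $V$, since they vanish on the Dirichlet portion of $\partial Q_R^+$), and exploiting the monotonicity of $\overline{A}$ together with the boundedness of $C_*$ to conclude that $v$ cannot escape the interval $[\min u, \max u]$ by more than a controlled amount.

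The principal obstacle is propagating the boundary $C^{1,\lambda_4}$ regularity up to $\Sigma$ in the mixed boundary-value setting: whereas the Lieberman--Antonini theory is developed for purely Dirichlet or purely conormal problems, here the boundary condition changes type along the edge $\partial \Sigma \cap \partial Q_R^+$, and one must verify that no loss of regularity occurs there beyond what is governed by the $C^{0,\alpha_1}$ continuity of $u$ supplied by Proposition~\ref{prop1.1}(3). Carefully tracking the dependence of the constants in \eqref{eq5.9}--\eqref{eq5.10} on the constant $C_*$, and adapting the Campanato decay to the inhomogeneous conormal flux within the general Musielak--Orlicz structure, will be the technical heart of the proof.
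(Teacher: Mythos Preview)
Your proposal is correct and aligns with the paper's own treatment, which is not a full proof but a brief remark citing Lieberman \cite{Li1991} and Antonini \cite[Theorem~7.1]{ACAN} for the regularity estimates \eqref{eq5.9}--\eqref{eq5.10}, the direct method (equivalently, your Browder--Minty argument) for existence and uniqueness, standard energy methods for \eqref{eq5.11}, and the maximum principle for \eqref{eq5.12}. Your outline is in fact more detailed than the paper's remark, and you correctly flag the mixed-boundary edge as a technical point that the paper leaves implicit; the only minor divergence is that the paper asserts \eqref{eq5.12} as an exact consequence of the maximum principle, whereas your hedged phrasing (``by more than a controlled amount'') suggests you anticipate an extra term from the nonzero conormal flux $C_*$---a legitimate concern that the paper does not address.
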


\begin{remark}
The existence and uniqueness for problem \eqref{mixed-BVP} follow from the direct method in the calculus of variations, leveraging the strict monotonicity and growth conditions of the frozen operator $\overline{A}$. The interior and boundary regularity, along with estimates \eqref{eq5.9} and \eqref{eq5.10}, are established by adapting the techniques in \cite{Li1991} to our Orlicz-type growth framework. Estimate \eqref{eq5.11} follows from standard energy methods combined with the structural assumptions, while \eqref{eq5.12} is a direct consequence of the maximum principle for elliptic equations with Neumann boundary conditions.
\end{remark}
\begin{proof}[Proof of Theorem \ref{Thm1.3}]
Building upon the higher integrability result from Lemma \ref{Lem3.1} and the frozen problem estimates from Lemma \ref{Lem3.2}, we employ a blow-up argument analogous to those in Theorems \ref{Thm1.1} and \ref{Thm1.2} to establish the boundary Hölder continuity of the gradient. Specifically, we prove that for any $x_c \in Q^+(0, R_1) \cup \Sigma$ and sufficiently small $\rho \leq R_*$ with $Q^+(x_c, 2\rho) \cap \Sigma \neq \emptyset$, the following Campanato-type estimate holds:
\begin{equation}\label{eq5.13}
\int_{Q^+(x_c, \rho)} |Du - \lbrace Du\rbrace_\rho|^{g^-}  \mathrm{d}x \leq c \rho^{n + \frac{\varepsilon g^-}{g^+}},
\end{equation}
where $\varepsilon = \frac{\beta \lambda_4}{4(n+\lambda_4)(1+\theta)}$ as defined in the proof of Theorem \ref{Thm1.1}.

We omit the detailed proof of \eqref{eq5.13} but note that in estimating the key term 
\[
I = \int_{Q_R^+} (\overline{A}(Du) - \overline{A}(Dv))(Du - Dv)  \mathrm{d}x,
\]
a boundary integral arises from the Neumann condition. This term can be estimated as follows
\[
\int_{\partial Q_R^+ \cap \Sigma} |C(x, u(x)) - C(x_*, u(x_*))||u - v|  ds \leq \theta(R) \int_{\partial Q_R^+ \cap \Sigma} |u - v|  ds \leq \theta(R) \int_{Q_R^+} |Du - Dv|  \mathrm{d}x,
\]
where $\theta$ is a continuous, increasing function determined by the constants $g^-, g^+, n, \Lambda_0, \lambda_0, M$, and $\widetilde{M}$, satisfying $\theta(0) = 0$. Using Young's inequality, we obtain
\[
\int_{\partial Q_R^+ \cap \Sigma} |C(x, u(x)) - C(x_*, u(x_*))||u - v|  ds \leq \epsilon_1 \int_{Q_R^+} G(x_*, |Du - Dv|)  \mathrm{d}x + c(\theta(R))^{\frac{g^+}{g^+-1}} R^n.
\]

By Theorem \ref{Thm1.1}, inequality \eqref{eq5.13} also holds when $Q(x_c, \rho + R_*/2) \subset Q^+(0, 2R_1)$. Therefore, \eqref{eq5.13} is valid for all $x_c \in Q^+(0, R_1) \cup \Sigma$ and sufficiently small $\rho \leq R_*$, which implies $u \in C^{1,\alpha}(\overline{Q^+(0, R_1/2)})$. This completes the proof of Theorem \ref{Thm1.3}.
\end{proof}

\begin{remark}
The boundary conditions in Theorems \ref{Thm1.2} and \ref{Thm1.3} are prescribed on the entire boundary $\partial \Omega$. However, a careful examination of the proofs reveals that if the boundary conditions are imposed only on a relatively open subset $\Sigma \subset \partial \Omega$, then the regularity conclusion localizes to $u \in C^{1,\alpha}_{\text{loc}}(\Omega \cup \Sigma)$. This localization follows because all estimates depend only on the behavior of the solution in neighborhoods of points in $\Sigma$.
\end{remark}

\ \\
 \textbf{Acknowledgment:}
The author thanks Prof. Carlo Alberto Antonini for valuable comments and corrections on an earlier arXiv version of this manuscript.
\ \\
 \textbf{Author Contributions:} The authors contributed equally to this work.\\
\ \\
\textbf{ Data Availability:} Data sharing is not applicable to this article as no new data were created or analyzed in this
 study.\\
 \ \\
 \textbf{Declarations}\\
 \ \\
\textbf{ Ethical Approval:} Not applicable.\\
\ \\
\textbf{Conflict of interest:} The authors declare that they have no Conflict of interest.

\end{document}